\crefname{assumption}{Assumption}{Assumptions}
\crefname{condition}{Condition}{Condition}
\crefname{hypothesis}{Hypothesis}{Hypotheses}
\title{Viscosity solutions of Hamilton-Jacobi-Bellman-Isaacs equations for time-delay systems\thanks{Submitted to the editors DATE.
\funding{This work is supported by the Grant of the President of the Russian Federation (project no. MK-3566.2019.1).}}}
\author{Anton Plaksin\thanks{N.N. Krasovskii Institute of Mathematics and Mechanics
of the Ural Branch of the Russian Academy of Sciences, Yekaterinburg, Russia; Ural Federal University, Yekaterinburg, Russia
  (\email{a.r.plaksin@gmail.com}).}
}
\begin{document}

\maketitle

\begin{abstract}
The paper deals with a zero-sum differential game for a dynamical system which motion is described by a nonlinear delay differential equation under an initial condition defined by a piecewise continuous function. The corresponding Cauchy problem for Hamilton-Jacobi-Bellman-Isaacs equation with coinvariant derivatives is derived and the definition of a viscosity solution of this problem is considered. It is proved that the differential game has the value that is the unique viscosity solution. Moreover, based on notions of sub- and superdifferentials corresponding to coinvariant derivatives, the infinitesimal description of the viscosity solution is obtained. The example of applying these results is given.
\end{abstract}

\begin{keywords}
differential games, time-delay systems, Hamilton-Jacobi equations, coinvariant derivatives, viscosity solutions.
\end{keywords}


\begin{AMS}
49N70,  
49K21,  	
49L20, 	
49J52,  	
49L25. 
\end{AMS}


\section{Introduction}

In finite-horizon optimal control and differential game problems for dynamical systems described by ordinary differential equations, investigations of value functions are closely related to Cauchy problems for the corresponding Hamilton-Jacobi (HJ) equations (which are called Hamilton-Jacobi-Bellman (HJB) equations for optimal control problems and Hamilton-Jacobi-Bellman-Isaacs (HJBI) equations for differential games) with partial derivatives. In the case when a value function is differentiable, it is the classical (differentiable) solution of the Cauchy problem. In the general case, this function is the minimax and viscosity solutions of the Cauchy problem. It is known (see, e.g., \cite{Bardi_Capuzzo-Dolcetta_1997, Crandall_Evans_Lions_1984, Crandall_Lions_1983, Evans_1998, Krasovskii_Subbotin_1988, Misztela_2019, Subbotin_1995, Subbotin_1980}) that both of these solutions are unique and coincide with each other.

Turning to the discussion of similar optimal control and differential game problems for dynamical systems described by delay differential equations (time-delay systems), first of all, note that the natural analog of value functions in such problems are (see, e.g., \cite{Banks_1968, Banks_Manitius_1974, Krasovskii_1962, Osipov_1971}) value functionals on a space of motion histories. Thus, these problems become related to infinite-dimensional HJ equations. Wherein, such equations are considered with Frechet derivatives (see, e.g., \cite{Barbu_Barron_Jensen_1988, Barron_1990, Cannarsa_Da_Prato_1990, Cannarsa_Frankowska_1992, Clarke_Ledyaev_1994, Crandall_Lions_1985, Soner_1988}) as well as with various derivatives along right extensions such as coinvariant derivatives (see, e.g., \cite{Bayraktar_Keller_2018, Kim_1999, Lukoyanov_2000, Lukoyanov_2001, Lukoyanov_2006, Lukoyanov_2010b, Lukoyanov_Gomoyunov_Plaksin_2017, Pepe_Ito_2012, Plaksin_2019}), Clio derivatives (see \cite{Aubin_Haddad_2002}), and derivatives within Ito calculus framework (see, e.g., \cite{Dupire_2019, Pham_Zhang_2014, Saporito_2019}).

The theory of viscosity solutions of infinite-dimensional HJ equations with Frechet derivatives was investigated by many authors (see, e.g. \cite{Barbu_Barron_Jensen_1988, Barron_1990, Cannarsa_Da_Prato_1990, Cannarsa_Frankowska_1992, Clarke_Ledyaev_1994, Crandall_Lions_1985, Soner_1988}). Its application to optimal control problems for time-delay systems was considered in \cite{Barron_1990, Soner_1988}. In \cite{Soner_1988}, using the modified definition of the viscosity solution, its existence, uniqueness and coincidence with the value functional were proved. However, note that conditions in this paper allow to consider time-delay systems with distributed delay, but exclude the consideration of another important for applications case of systems with a discrete delay. Optimal control problems for systems with a discrete delay were investigated in \cite{Barron_1990}. This paper established that the value functional is the viscosity solution, but the uniqueness question of this solution was not investigated.

One can also mention papers \cite{Clarke_Wolenski_1996, Wolenski_1994} in which optimization problems for quite general delay differential inclusions (which cover the case of discrete delay) were considered and various necessary optimality conditions were given.

In \cite{Kim_1999}, for a description of the value functional in optimal control problems for time-delay systems, the notion of coinvariant derivatives was suggested. The theory of minimax and viscosity solutions of Cauchy problems for HJ equations with coinvariant derivatives and its application to differential games for time-delay systems were developed in \cite{Lukoyanov_2000, Lukoyanov_2001, Lukoyanov_2006, Lukoyanov_2010b}. In these papers, the class of time-delay systems under consideration is quite general and includes systems with both distributed and discrete delays. In \cite{Lukoyanov_2000, Lukoyanov_2001, Lukoyanov_2006}, it was shown that the value functional is the unique minimax solution. In \cite{Lukoyanov_2010b}, using, similarly to \cite{Soner_1988}, modified definition of the viscosity solution based on a sequence of compact sets, it was proved that the value functional is the unique viscosity solution. However, due to such specifics of the definition of the viscosity solution, it is not reduced to the classical definition of a viscosity solution in the particular case without delay.

In recent paper \cite{Plaksin_2019} dedicated to optimal control problems for time-delay systems with a discrete delay and the corresponding Cauchy problems for HJB equations with coinvariant derivatives, the viscosity solution of the Cauchy problem was defined by means of inequalities for sub- and superdifferentials corresponding to coinvariant derivatives. Such definition of the viscosity solution seems quite close to the more classical definition, since, in the classical theory of HJ equations with partial derivatives, viscosity solutions can be described by similar inequalities for usual sub- and superdifferentials. This paper \cite{Plaksin_2019} established that the value functional is the unique viscosity solution. Note that this result was managed to get, in particular, due to the choice of the space of piecewise continuous functions as the space of motion histories.

The present paper continues and, in a sense, expands investigations in \cite{Plaksin_2019}. We consider a differential game for a quite general time-delay system (which include discrete and distributed delays) on the space of piecewise continuous motion histories. We associate this game with Cauchy problem for the HJBI equation with coinvariant derivatives. We give the definition of a viscosity solution of this problem in classical sense. The main result of this paper is that the differential game has the value functional that is the unique viscosity solution. Moreover, similarly to \cite{Plaksin_2019}, based on notions of sub- and superdifferentials corresponding to coinvariant derivatives, we obtain the infinitesimal description of the viscosity solution. Also, we give formulas for calculating such sub- and superdifferentials and consider the example showing how we can apply all these results to a given functional in order to prove that it is the value functional of the differential game.


\section{Main results}\label{section:main_results}

Let $\mathbb R^n$ be the $n$-dimensional Euclidian space with the inner product $\langle \cdot, \cdot \rangle$ and the norm $\|\cdot\|$. A function $x(\cdot) \colon [a,b) \mapsto \mathbb R^n$ (or $x(\cdot) \colon [a,b] \mapsto \mathbb R^n$) is called piecewise continuous if there exist numbers $a = \xi_1 < \xi_2 < \ldots < \xi_k = b$ such that, for each $i \in \overline{1,k-1}$, the function $x(\cdot)$ is continuous on the interval $[\xi_i,\xi_{i+1})$ and there exists a finite limit of $x(\xi)$ as $\xi$ approaches $\xi_{i+1}$ from the left. Denote by $\mathrm{PC}([a,b), \mathbb R^n)$ (or $\mathrm{PC}([a,b], \mathbb R^n)$) the linear space of piecewise continuous functions $x(\cdot) \colon [a,b) \mapsto \mathbb R^n$ (or $x(\cdot) \colon [a,b] \mapsto \mathbb R^n$).

Let $t_0 < \vartheta$ and $h > 0$. Denote
\begin{equation}\label{PC_G}
\mathrm{PC} = \mathrm{PC}([-h,0),\mathbb R^n),\quad \mathbb G = [t_0,\vartheta] \times \mathbb R^n \times \mathrm{PC}.
\end{equation}
Define the following norms on the space $\mathrm{PC}$:
\begin{displaymath}
\|w(\cdot)\|_1 = \int\limits_{-h}^0 \|w(\xi)\|\mathrm{d}\xi,\quad \|w(\cdot)\|_\infty = \sup\limits_{\xi\in [-h,0)} \|w(\xi)\|,\quad w(\cdot) \in \mathrm{PC}.
\end{displaymath}

For each $(\tau,z,w(\cdot)) \in \mathbb G$, we consider a two-player zero-sum differential game for the dynamical system described by the delay differential equation
\begin{equation}\label{dynamical_system}
\begin{array}{c}
\dot{x}(t) = f(t,x(t),x_t(\cdot),u(t),v(t)),\quad t \in [\tau,\vartheta],\\[0.2cm]
x(t) \in \mathbb R^n,\quad u(t) \in \mathbb U \subset \mathbb R^l,\quad v(t) \in \mathbb V \subset \mathbb R^m,
\end{array}
\end{equation}
with the initial condition
\begin{equation}\label{initial_condition}
x(\tau) = z,\quad x(t) = w(t - \tau),\quad t \in [\tau-h,\tau),
\end{equation}
and the quality index
\begin{equation}\label{quality_index}
\gamma = \sigma(x(\vartheta),x_\vartheta(\cdot)) + \int\limits_\tau^\vartheta f^0(t,x(t),x_t(\cdot),u(t),v(t)) \mathrm{d} t,
\end{equation}

Here $t$ is the time variable; $x(t)$ is the value of the state vector at the time $t$; $\dot{x}(t) = \mathrm{d} x(t) / \mathrm{d} t$; the symbol $x_t(\cdot)$ denotes the function on the interval $[-h,0)$ defined by $x_t(\xi) = x(t + \xi)$, $\xi \in [-h,0)$; $u(t)$ and $v(t)$ are control actions of the first and second players, respectively; $\mathbb U$ and $\mathbb V$ are compact sets.

In this differential game, the first player aims to minimize $\gamma$, while the second player aims to maximize it.

Denote
\begin{equation}\label{P}
P(\alpha) = \big\{(x,r(\cdot)) \in \mathbb R^n \times \mathrm{PC} \colon \|x\| \leq \alpha,\, \|r(\cdot)\|_\infty \leq \alpha\big\},\quad \alpha \geq 0.
\end{equation}

We assume that the following conditions hold:
\begin{condition}\label{cond:f_continuous}
For each $(x,r(\cdot)) \in \mathbb R^n \times \mathrm{PC}$, the mappings $[t_0,\vartheta] \times \mathbb R^l \times \mathbb R^m \ni (t,u,v) \mapsto f = f(t,x,r(\cdot),u,v) \in \mathbb R^n$ and $[t_0,\vartheta] \times \mathbb R^l \times \mathbb R^m \ni (t,u,v) \mapsto f^0 = f^0(t,x,r(\cdot),u,v) \in \mathbb R$ are continuous.
\end{condition}

\begin{condition}\label{cond:f_sublinear_growth}
There exists a constant $c_f > 0$ such that
\begin{displaymath}
\big\|f(t,x,r(\cdot),u,v)\big\| + \big|f^0(t,x,r(\cdot),u,v)\big|\leq c_f \big(1 + \|x\| + \|r(\cdot)\|_1 + \|r(-h)\|\big)
\end{displaymath}
for any $(t,x,r(\cdot)) \in \mathbb G$, $u \in \mathbb U$, and $v \in \mathbb V$.
\end{condition}

\begin{condition}\label{cond:f_lipshiz_continuous}
For every $\alpha > 0$, there exists a number $\lambda_f = \lambda_f(\alpha) > 0$ such that
\begin{displaymath}
\begin{array}{l}
\big\|f(t,x,r(\cdot),u,v) - f(t,x',r'(\cdot),u,v)\big\|
+ \big|f^0(t,x,r(\cdot),u,v) - f^0(t,x',r'(\cdot),u,v)\big|\\[0.2cm]
\hspace{3cm}
\leq \lambda_f \big(\|x - x'\| + \|r(\cdot) - r'(\cdot)\|_1 + \|r(-h) - r'(-h)\|\big)
\end{array}
\end{displaymath}
for any $t \in [t_0,\vartheta]$, $(x,r(\cdot)), (x',r'(\cdot)) \in P(\alpha)$, $u \in \mathbb U$, and $v \in \mathbb V$.
\end{condition}

\begin{condition}\label{cond:f_saddle_point}
The equality
\begin{displaymath}
\begin{array}{l}
\min\limits_{u \in \mathbb U} \max\limits_{v \in \mathbb V} \Big(\langle f(t,x,r(\cdot),u,v),s \rangle + f^0(t,x,r(\cdot),u,v)\Big)\\[0.2cm]
\hspace{2cm}
= \max\limits_{v \in \mathbb V} \min\limits_{u \in \mathbb U} \Big(\langle f(t,x,r(\cdot),u,v),s \rangle + f^0(t,x,r(\cdot),u,v)\Big)
\end{array}
\end{displaymath}
holds for any $(t,x,r(\cdot)) \in \mathbb G$ and $s \in \mathbb R^n$.
\end{condition}

\begin{condition}\label{cond:sigma_lipshiz_continuous}
The mapping $\mathbb R^n \times \mathrm{PC} \ni (x,r(\cdot)) \mapsto \sigma = \sigma(x,r(\cdot)) \in \mathbb R$ satisfies the following Lipschitz  continuity condition: for every $\alpha > 0$, there exists a number $\lambda_\sigma = \lambda_\sigma(\alpha) > 0$ such that
\begin{displaymath}
\big|\sigma(x,r(\cdot)) - \sigma(x',r'(\cdot))\big| \leq \lambda_\sigma \big(\|x - x'\| + \|r(\cdot) - r'(\cdot)\|_1\big)
\end{displaymath}
for any $(x,r(\cdot)),(x',r'(\cdot)) \in P(\alpha)$.
\end{condition}

\begin{remark}
\cref{cond:f_sublinear_growth,cond:f_lipshiz_continuous} allow to consider dynamical systems with both distributed and one discrete delays. For example, a particular case of system \cref{dynamical_system} is
\begin{displaymath}
\dot{x}(t) = f_*\Bigg(t,x(t), x(t-h),\int\limits_{t-h}^t g_*(t,\xi,x(\xi)) \mathrm{d} \xi, u(t), v(t)\Bigg),\quad t \in [t_0,\vartheta],
\end{displaymath}
under suitable conditions for the functions $f_*$ and $g_*$.
\end{remark}

\begin{remark}
In the case when system \cref{dynamical_system} does not have delays, \cref{cond:f_continuous,cond:f_sublinear_growth,cond:f_lipshiz_continuous,cond:f_saddle_point} take the form of classical conditions for the optimal control and differential games theories (see, e.g., \cite{Evans_1998, Krasovskii_Subbotin_1988, Subbotin_1995}).
\end{remark}

Define the set of Lipschitz continuous right extensions from $(\tau,z,w(\cdot))$ as follows:
\begin{displaymath}
\begin{array}{rl}
\Lambda(\tau,z,w(\cdot)) = \big\{x(\cdot) \in \mathrm{PC}([\tau-h,\vartheta],\mathbb R^n) \colon \!\!\!\!\!\! &  x(\tau) = z,\, x(t) = w(t - \tau),\, t \in [\tau-h,\tau),\\[0.2cm]
& x(\cdot) \text{ is Lipschitz continuous on } [\tau,\vartheta] \big\}.
\end{array}
\end{displaymath}
By admissible control realizations of the first and second players, we mean measurable functions $u(\cdot) \colon [\tau,\vartheta] \mapsto \mathbb U$ and $v(\cdot) \colon [\tau,\vartheta] \mapsto \mathbb V$, respectively. Denote by $\mathcal{U}_\tau$ and $\mathcal{V}_\tau$ the sets of admissible control realizations of the first and second players. Under \cref{cond:f_continuous,cond:f_sublinear_growth,cond:f_lipshiz_continuous}, following, for example, the scheme from \cite[Section 7]{Filippov_1988} (see also \cite[Section 4.2]{Kim_1999}), one can show that each pair of realizations $u(\cdot) \in \mathcal{U}_\tau$ and $v(\cdot) \in \mathcal{V}_\tau$ uniquely generate the motion $x(\cdot) = x(\cdot\,|\,\tau,z,w(\cdot),u(\cdot),v(\cdot))$ of system \cref{dynamical_system,initial_condition} that is the function from $\Lambda(\tau,z,w(\cdot))$, satisfying equation \cref{dynamical_system} almost everywhere.

We consider differential game \cref{dynamical_system,initial_condition,quality_index} in classes of non-anticipative strategies of players (see, e.g. \cite[Chapter VIII, Section 1]{Bardi_Capuzzo-Dolcetta_1997}) or quasi-strategies in another terminology (see, e.g. \cite[Chapter III, Section 14.2]{Subbotin_1995}).

By a non-anticipative strategy of the first player, we mean a mapping $Q^u_\tau \colon \mathcal{V}_\tau \mapsto \mathcal{U}_\tau$ such that, for each $v(\cdot),v'(\cdot) \in \mathcal{V}_\tau$ and $t \in [\tau,\vartheta]$, if the equality $v(\xi) = v'(\xi)$ is valid for a.e. $\xi \in [\tau,t]$, then the equality $Q^u_\tau[v(\cdot)](\xi) = Q^u_\tau[v'(\cdot)](\xi)$ holds for a.e. $\xi \in [\tau,t]$.

A non-anticipative strategy of the first player $Q^u_\tau$ and a control realization of the second player $v(\cdot) \in \mathcal{V}_\tau$ define the control realization of the first player $u(\cdot) = Q^u_\tau[v(\cdot)](\cdot)$, the motion $x(\cdot) = x(\cdot\,|\,\tau,z,w(\cdot),u(\cdot),v(\cdot))$ of system \cref{dynamical_system,initial_condition} and the value $\gamma = \gamma(\tau,z,w(\cdot),Q^u_\tau,v(\cdot))$ of quality index \cref{quality_index}. The lower value of differential game \cref{dynamical_system,initial_condition,quality_index} is defined by
\begin{equation}\label{rho_u}
\rho^u(\tau,z,w(\cdot)) = \inf\limits_{Q^u_\tau} \sup\limits_{v(\cdot) \in \mathcal{V}_\tau} \gamma(\tau,z,w(\cdot),Q^u_\tau,v(\cdot)).
\end{equation}
The functional $\mathbb G \ni (\tau,z,w(\cdot)) \mapsto \rho^u = \rho^u(\tau,z,w(\cdot)) \in \mathbb R$ is the lower value functional of differential game \cref{dynamical_system,initial_condition,quality_index}.

Similarly, a non-anticipative strategy of the second player is a mapping $Q^v_\tau \colon \mathcal{U}_\tau \mapsto \mathcal{V}_\tau$ such that, for each $u(\cdot),u'(\cdot) \in \mathcal{U}_\tau$ and $t \in [\tau,\vartheta]$, if the equality $u(\xi) = u'(\xi)$ is valid for a.e. $\xi \in [\tau,t]$, then the equality  $Q^v_\tau[u(\cdot)](\xi) = Q^v_\tau[u'(\cdot)](\xi)$ holds for a.e. $\xi \in [\tau,t]$. Such a non-anticipative strategy together with a control realization of the first player $u(\cdot) \in \mathcal{U}_\tau$ define the control realization of the second player $v(\cdot) = Q^v_\tau[u(\cdot)](\cdot)$, the motion $x(\cdot) = x(\cdot\,|\,\tau,z,w(\cdot),u(\cdot),v(\cdot))$ of system \cref{dynamical_system,initial_condition} and the value $\gamma = \gamma(\tau,z,w(\cdot),u(\cdot),Q^v_\tau)$. The upper value of differential game \cref{dynamical_system,initial_condition,quality_index} is
\begin{equation}\label{rho_v}
\rho^v(\tau,z,w(\cdot)) = \sup\limits_{Q^v_\tau} \inf\limits_{u(\cdot) \in \mathcal{U}_\tau} \gamma(\tau,z,w(\cdot),u(\cdot),Q^v_\tau).
\end{equation}
The functional $\mathbb G \ni (\tau,z,w(\cdot)) \mapsto \rho^v = \rho^v(\tau,z,w(\cdot)) \in \mathbb R$ is the upper value functional of differential game \cref{dynamical_system,initial_condition,quality_index}.

If the lower value functional $\rho^u$ and the upper value functional $\rho^v$ satisfy the equality $\rho^u(\tau,z,w(\cdot)) = \rho^v(\tau,z,w(\cdot))$ for any $(\tau,z,w(\cdot)) \in \mathbb G$ then we say that differential game \cref{dynamical_system,initial_condition,quality_index} has the value functional
\begin{displaymath}
\rho = \rho(\tau,z,w(\cdot)) = \rho^u(\tau,z,w(\cdot)) = \rho^v(\tau,z,w(\cdot)),\quad (\tau,z,w(\cdot)) \in \mathbb G.
\end{displaymath}

In order to consider Hamilton-Jacobi (HJ) equation, which corresponds to differential game \cref{dynamical_system,initial_condition,quality_index}, we use the following definition of differentiability of functionals. Following \cite{Kim_1999, Lukoyanov_2000}, a functional $\varphi \colon \mathbb G \mapsto \mathbb R$ is called coinvariantly (ci-) differentiable at a point $(\tau,z,w(\cdot)) \in \mathbb G$, $\tau < \vartheta$ if there exist a number $\partial^{ci}_{\tau,w}\varphi(\tau,z,w(\cdot)) \in \mathbb R$ and a vector $\nabla_z\varphi(\tau,z,w(\cdot)) \in \mathbb R^n$ such that, for every $t \in [\tau,\vartheta]$, $x \in \mathbb R^n$, and $y(\cdot) \in \Lambda(\tau,z,w(\cdot))$, the relation below holds
\begin{displaymath}
\begin{array}{c}
\varphi(t,x,y_t(\cdot)) - \varphi(\tau,z,w(\cdot))  =  (t - \tau) \partial^{ci}_{\tau,w}\varphi(\tau,z,w(\cdot)) \\[0.2cm]
+ \langle x - z, \nabla_z \varphi(\tau,z,w(\cdot)) \rangle + o(|t - \tau| + \|x - z\|),
\end{array}
\end{displaymath}
where the value $o(\cdot)$ depends on the triplet $\{\tau,z,y(\cdot)\}$, and $o(\delta)/\delta \to 0$ as $\delta \to +0$. Then $\partial^{ci}_{\tau,w}\varphi(\tau,z,w(\cdot))$ is called the ci-derivative of $\varphi$ with respect to $\{\tau,w(\cdot)\}$ and $\nabla_z \varphi(\tau,z,w(\cdot))$ is the gradient of $\varphi$ with respect to $z$.

Define the Hamiltonian of differential game \cref{dynamical_system,initial_condition,quality_index} by
\begin{equation}\label{Hamiltonian}
\begin{array}{c}
H(\tau,z,w(\cdot),s) = \min\limits_{u \in \mathbb U} \max\limits_{v \in \mathbb V} \big(\langle f(\tau,z,w(\cdot),u,v),s\rangle + f^0(\tau,z,w(\cdot),u,v)\big),\\[0.2cm]
(\tau,z,w(\cdot)) \in \mathbb G,\quad s \in \mathbb R^n.
\end{array}
\end{equation}
Consider the following Cauchy problem for the HJ equation
\begin{equation}\label{Hamilton-Jacobi_equation}
\partial^{ci}_{\tau,w} \varphi(\tau,z,w(\cdot)) + H(\tau,z,w(\cdot),\nabla_z \varphi(\tau,z,w(\cdot))) = 0,\ \
(\tau,z,w(\cdot)) \in \mathbb G,\ \ \tau < \vartheta,
\end{equation}
and the terminal condition
\begin{equation}\label{terminal_condition}
\varphi(\vartheta,z,w(\cdot)) =\sigma(z,w(\cdot)),\quad (\vartheta,z,w(\cdot)) \in \mathbb G.
\end{equation}

Similarly to \cite{Plaksin_2019}, we search a solution of this problem in the following class of functionals. Denote by $\Phi$ the set of functionals $\varphi = \varphi(\tau,z,w(\cdot)) \in \mathbb R$, $(\tau,z,w(\cdot)) \in \mathbb G$, which are continuous with respect to $\tau$ and satisfy the following Lipschitz continuity condition: for every $\alpha > 0$, there exists a number $\lambda_\varphi = \lambda_\varphi(\alpha) > 0$ such that
\begin{equation}\label{phi_lipshiz_continuous}
|\varphi(\tau,z,w(\cdot)) - \varphi(\tau,z',w'(\cdot))| \leq \lambda_\varphi\big(\|z - z'\| + \|w(\cdot) - w'(\cdot)\|_1\big)
\end{equation}
for any $\tau \in [t_0,\vartheta]$ and $(z,w(\cdot)), (z',w'(\cdot)) \in P(\alpha)$.

\begin{remark}
The choice of this class is motivated, in particular, by the inclusions $\rho^u, \rho^v \in \Phi$, which are proved in \cref{lem:rho_in_Phi} below.
\end{remark}

Define the singleton consisting of the right extension from $(\tau,z,w(\cdot)) \in \mathbb G$ by constant:
\begin{displaymath}
\Lambda_0(\tau,z,w(\cdot)) = \big\{x(\cdot) \in \Lambda(\tau,z,w(\cdot))\colon x(t) = z,\, t \in [\tau,\vartheta]\big\} \subset \Lambda(\tau,z,w(\cdot)).
\end{displaymath}

\begin{remark}\label{rem:ci-differentiable}
One can show that a functional $\varphi \in \Phi$ is ci-differentiable at a point $(\tau,z,w(\cdot)) \in \mathbb G$, $\tau < \vartheta$ if and only if there exist $\partial^{ci}_{\tau,w}\varphi(\tau,z,w(\cdot)) \in \mathbb R$ and $\nabla_z\varphi(\tau,z,w(\cdot)) \in \mathbb R^n$ such that
\begin{displaymath}
\begin{array}{c}
\varphi(t,x,\kappa_t(\cdot)) - \varphi(\tau,z,w(\cdot))  =  (t - \tau) \partial^{ci}_{\tau,w}\varphi(\tau,z,w(\cdot)) \\[0.2cm]
+ \langle x - z, \nabla_z \varphi(\tau,z,w(\cdot)) \rangle + o(|t - \tau| + \|x - z\|),
\end{array}
\quad t \in [\tau,\vartheta],\quad x \in \mathbb R^n,
\end{displaymath}
where $\kappa(\cdot) \in \Lambda_0(\tau,z,w(\cdot))$, $o(\cdot)$ depends on $\{\tau,z,w(\cdot)\}$, and $o(\delta)/\delta \to 0$ as $\delta \to +0$. Thus, for functionals $\varphi \in \Phi$, the definition of ci-differentiability becomes easier.
\end{remark}

Now, let us give two propositions which establish the relation between Cauchy problem \cref{Hamilton-Jacobi_equation,terminal_condition} and the value functional $\rho$ of differential game \cref{dynamical_system,initial_condition,quality_index} in the case when $\rho$ is ci-differentiable.

\begin{proposition}
Let differential game \cref{dynamical_system,initial_condition,quality_index} have the value functional $\rho$. If $\rho$ is ci-differentiable at a point $(\tau,z,w(\cdot)) \in \mathbb G$, $\tau < \vartheta$, then it satisfies HJ equation \cref{Hamilton-Jacobi_equation} at this point.
\end{proposition}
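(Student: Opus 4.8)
The plan is to establish the two inequalities
$\partial^{ci}_{\tau,w}\rho(\tau,z,w(\cdot)) + H(\tau,z,w(\cdot),s) \ge 0$ and
$\partial^{ci}_{\tau,w}\rho(\tau,z,w(\cdot)) + H(\tau,z,w(\cdot),s) \le 0$
separately, where $s := \nabla_z \rho(\tau,z,w(\cdot))$, by combining the dynamic programming principle for the value functional with the ci-differentiability expansion at $(\tau,z,w(\cdot))$. For the first inequality I would fix a minimizer $u^* \in \mathbb U$ of the map $u \mapsto \max_{v\in\mathbb V}\big(\langle f(\tau,z,w(\cdot),u,v),s\rangle + f^0(\tau,z,w(\cdot),u,v)\big)$, and, for $t \in (\tau,\vartheta]$ near $\tau$, feed the constant realization $u(\cdot)\equiv u^*$ into the dynamic programming principle for $\rho = \rho^u$, obtaining
$\rho(\tau,z,w(\cdot)) \le \sup_{v(\cdot)\in\mathcal{V}_\tau}\big[\int_\tau^t f^0(\xi,x(\xi),x_\xi(\cdot),u^*,v(\xi))\,\mathrm d\xi + \rho(t,x(t),x_t(\cdot))\big]$,
the supremum over the motions $x(\cdot) = x(\cdot\,|\,\tau,z,w(\cdot),u^*(\cdot),v(\cdot))$. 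For the second inequality I would symmetrically invoke the Isaacs equality of \cref{cond:f_saddle_point} to choose a maximizer $v^* \in \mathbb V$ of $v \mapsto \min_{u\in\mathbb U}\big(\langle f(\tau,z,w(\cdot),u,v),s\rangle + f^0(\tau,z,w(\cdot),u,v)\big)$, feed $v(\cdot)\equiv v^*$ into the dynamic programming principle for $\rho = \rho^v$, and obtain the reversed inequality with $\inf_{u(\cdot)\in\mathcal{U}_\tau}$ in place of the supremum.

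Then, in each case, I would substitute the expansion
$\rho(t,x(t),x_t(\cdot)) - \rho(\tau,z,w(\cdot)) = (t-\tau)\,\partial^{ci}_{\tau,w}\rho(\tau,z,w(\cdot)) + \langle x(t)-z, s\rangle + o(t-\tau)$
together with $x(t) - z = \int_\tau^t f(\xi,x(\xi),x_\xi(\cdot),u^*,v(\xi))\,\mathrm d\xi$, and "freeze" the dynamics at $(\tau,z,w(\cdot))$: by \cref{cond:f_continuous} (joint continuity in $(t,u,v)$ on the compact set $[t_0,\vartheta]\times\mathbb U\times\mathbb V$) together with \cref{cond:f_lipshiz_continuous}, the quantities $f(\xi,x(\xi),x_\xi(\cdot),u^*,v)$ and $f^0(\xi,x(\xi),x_\xi(\cdot),u^*,v)$ differ from $f(\tau,z,w(\cdot),u^*,v)$ and $f^0(\tau,z,w(\cdot),u^*,v)$ by $o(1)$, uniformly in $v\in\mathbb V$, as $\xi\to\tau^+$. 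Estimating $\int_\tau^t\big(\langle f(\tau,z,w(\cdot),u^*,v(\xi)),s\rangle + f^0(\tau,z,w(\cdot),u^*,v(\xi))\big)\,\mathrm d\xi \le (t-\tau)\max_{v\in\mathbb V}\big(\langle f(\tau,z,w(\cdot),u^*,v),s\rangle + f^0(\tau,z,w(\cdot),u^*,v)\big)$, dividing by $t-\tau>0$ and letting $t\to\tau^+$, I arrive at $0 \le \partial^{ci}_{\tau,w}\rho(\tau,z,w(\cdot)) + \max_{v\in\mathbb V}\big(\langle f(\tau,z,w(\cdot),u^*,v),s\rangle + f^0(\tau,z,w(\cdot),u^*,v)\big) = \partial^{ci}_{\tau,w}\rho(\tau,z,w(\cdot)) + H(\tau,z,w(\cdot),s)$, the last equality by the choice of $u^*$. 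The second inequality follows by the same computation with $\min$ and $\inf$ replacing $\max$ and $\sup$, using \cref{cond:f_saddle_point} to identify $\min_{u\in\mathbb U}\big(\langle f(\tau,z,w(\cdot),u,v^*),s\rangle + f^0(\tau,z,w(\cdot),u,v^*)\big)$ with $H(\tau,z,w(\cdot),s)$; the two inequalities then give the asserted equality.

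The main obstacle is the \emph{uniformity} of the remainder term $o(t-\tau)$ over the whole family of motions $x(\cdot)$ generated as $v(\cdot)$ (respectively $u(\cdot)$) ranges over all admissible realizations — without it one cannot interchange $\sup_{v(\cdot)}$ (respectively $\inf_{u(\cdot)}$) with the limit $t\to\tau^+$. I would handle this using $\rho\in\Phi$, which holds by \cref{lem:rho_in_Phi}. By \cref{cond:f_sublinear_growth} all these motions are Lipschitz on $[\tau,t]$ with one common constant $L$, so $\|x(t)-z\| \le L(t-\tau)$, and, writing $\kappa(\cdot)\in\Lambda_0(\tau,z,w(\cdot))$ for the constant right extension, $\|x_t(\cdot)-\kappa_t(\cdot)\|_1 = o(t-\tau)$ (in fact of order $(t-\tau)^2$). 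The Lipschitz estimate \cref{phi_lipshiz_continuous} then lets me replace $\rho(t,x(t),x_t(\cdot))$ by $\rho(t,x(t),\kappa_t(\cdot))$ up to $o(t-\tau)$, and for the latter \cref{rem:ci-differentiable} provides an expansion whose remainder depends only on $\{\tau,z,w(\cdot)\}$, hence is uniform. The same Lipschitz/continuity bookkeeping makes the freezing step uniform too; here one additionally uses right-continuity of $\mathrm{PC}$-functions at $-h$ to handle the $\|r(-h)-r'(-h)\|$ term in \cref{cond:f_lipshiz_continuous}, and $L^1$-continuity of translations to control $\|x_\xi(\cdot)-w(\cdot)\|_1$ as $\xi\to\tau^+$.
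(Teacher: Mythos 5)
Your argument is correct, but it follows a genuinely different route from the paper's. The paper obtains this proposition as a corollary of its main machinery: by \cref{theorem} the value functional is the unique viscosity solution, and by \cref{prop:viscosity_solution_is_smooth_solution} -- which in turn follows from \cref{prop:subdifferential_inequalities} and \cref{prop:subdifferentials_and_ci-differentials}, since at a point of ci-differentiability the sub- and superdifferentials reduce to $\{(p_0,\nabla_z\varphi)\colon p_0\le\partial^{ci}_{\tau,w}\varphi\}$ and $\{(q_0,\nabla_z\varphi)\colon q_0\ge\partial^{ci}_{\tau,w}\varphi\}$, so inequalities \cref{subdifferential_viscosity_solution} and \cref{superdifferential_viscosity_solution} pinch equation \cref{Hamilton-Jacobi_equation} -- any viscosity solution satisfies the equation wherever it is ci-differentiable. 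You instead verify the equation directly from the stability estimates of \cref{lem:rho_is_stable} with the frozen controls $u(\cdot)\equiv u^*$ and $v(\cdot)\equiv v^*$, \cref{cond:f_saddle_point}, and the ci-expansion. Your handling of the one genuinely delicate point, the uniformity of the remainder over the family of motions, is the right one and is essentially the device the paper itself deploys inside the proof of \cref{lem:rho_is_viscosity_solution}: pass from $x_t(\cdot)$ to the constant extension $\kappa_t(\cdot)$ at a cost of order $\lambda_\varphi\lambda_X(t-\tau)^2$ using $\rho\in\Phi$ (\cref{lem:rho_in_Phi}) and \cref{lem:alpha_x-lambda_x}, then invoke the $\Lambda_0$-form of ci-differentiability from \cref{rem:ci-differentiable}, whose remainder depends only on $(\tau,z,w(\cdot))$; the same bookkeeping, together with right-continuity of $w(\cdot)$ at $-h$ for the $\|r(-h)-r'(-h)\|$ term in \cref{cond:f_lipshiz_continuous}, makes the freezing of the dynamics uniform in the opposing control. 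What your approach buys is a self-contained, elementary verification that never invokes the notion of viscosity solution; what the paper's approach buys is that the proposition comes for free from results it must establish anyway, without repeating the dynamic-programming estimates.
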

\begin{proposition}
Let a functional $\varphi \in \Phi$ be ci-differentiable at every point $(\tau,z,w(\cdot)) \in \mathbb G$, $\tau < \vartheta$, satisfy HJ equation \cref{Hamilton-Jacobi_equation} at these points and satisfy terminal condition \cref{terminal_condition}. Then $\varphi$ is the value functional of differential game \cref{dynamical_system,initial_condition,quality_index}.
\end{proposition}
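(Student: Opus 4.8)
The plan is to establish, at each $(\tau,z,w(\cdot))\in\mathbb G$, the four inequalities $\rho^u\le\varphi$, $\rho^v\le\varphi$, $\varphi\le\rho^u$, $\varphi\le\rho^v$; together they yield $\rho^u=\rho^v=\varphi$, which is the assertion. The first two will come from a single ``extremal aiming'' construction in which the first player plays controls realising the outer minimum $\min_{u\in\mathbb U}\max_{v\in\mathbb V}(\langle f,\nabla_z\varphi\rangle+f^0)$ in the Hamiltonian at the current state; the last two from the mirror construction for the second player based on $\max_{v\in\mathbb V}\min_{u\in\mathbb U}(\langle f,\nabla_z\varphi\rangle+f^0)$. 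By the Isaacs condition \cref{cond:f_saddle_point} these two values coincide (both equal $H$, see \cref{Hamiltonian}), so the two constructions are genuinely symmetric and I describe only the first.

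The mechanism is a chain rule for ci-differentiable functionals. Fix $(\tau,z,w(\cdot))$, $\varepsilon>0$, arbitrary $u(\cdot)\in\mathcal U_\tau$, $v(\cdot)\in\mathcal V_\tau$ and the generated motion $x(\cdot)=x(\cdot\,|\,\tau,z,w(\cdot),u(\cdot),v(\cdot))\in\Lambda(\tau,z,w(\cdot))$, and put $g(t)=\varphi(t,x(t),x_t(\cdot))+\int_\tau^t f^0(s,x(s),x_s(\cdot),u(s),v(s))\,\mathrm{d}s$. Since $x(\cdot)$ is Lipschitz on $[\tau,\vartheta]$ and belongs to $\Lambda(t,x(t),x_t(\cdot))$ for every $t$, the definition of ci-differentiability of $\varphi$ at $(t,x(t),x_t(\cdot))$ produces a remainder that is $o$ of the time increment alone; together with the boundedness of $\nabla_z\varphi$ along the motion (a consequence of \cref{phi_lipshiz_continuous}) and of $\partial^{ci}_{\tau,w}\varphi$ along it (from \cref{Hamilton-Jacobi_equation} and \cref{cond:f_sublinear_growth}) this shows that $g$ is Lipschitz on $[\tau,\vartheta]$ and, a.e.,
\begin{displaymath}
\dot g(t)=\partial^{ci}_{\tau,w}\varphi(t,x(t),x_t(\cdot))+\big\langle\nabla_z\varphi(t,x(t),x_t(\cdot)),f(t,x(t),x_t(\cdot),u(t),v(t))\big\rangle+f^0(t,x(t),x_t(\cdot),u(t),v(t)).
\end{displaymath}
By \cref{Hamilton-Jacobi_equation,Hamiltonian} the right-hand side equals $\big(\langle f(\cdot,u(t),v(t)),\nabla_z\varphi\rangle+f^0(\cdot,u(t),v(t))\big)-H(\cdot,\nabla_z\varphi)$ at $(t,x(t),x_t(\cdot))$. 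Hence, if at a.e.\ $t$ the first player's control $u(t)$ realises $\min_{u\in\mathbb U}\max_{v\in\mathbb V}(\langle f,\nabla_z\varphi\rangle+f^0)$ at $(t,x(t),x_t(\cdot))$, then $\langle f(\cdot,u(t),v),\nabla_z\varphi\rangle+f^0(\cdot,u(t),v)\le H(\cdot,\nabla_z\varphi)$ for every $v\in\mathbb V$, whence $\dot g\le0$ a.e.\ and, using terminal condition \cref{terminal_condition}, $\gamma=g(\vartheta)\le g(\tau)=\varphi(\tau,z,w(\cdot))$.

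It then remains to realise this pointwise recipe inside the admissible (non-anticipative) classes. I would fix a partition $\tau=t_1<\dots<t_{N+1}=\vartheta$ of diameter $\Delta$ and prescribe, on each $[t_k,t_{k+1})$, a constant $u_k\in\mathbb U$ realising $\min_{u\in\mathbb U}\max_{v\in\mathbb V}(\langle f(t_k,x(t_k),x_{t_k}(\cdot),u,v),\nabla_z\varphi(t_k,x(t_k),x_{t_k}(\cdot))\rangle+f^0(t_k,x(t_k),x_{t_k}(\cdot),u,v))$, where $x(\cdot)$ is the motion already built on $[\tau-h,t_k]$; a measurable $u_k$ exists by \cref{cond:f_continuous} and compactness of $\mathbb U,\mathbb V$, and since that portion of the motion depends only on the opponent's realisation on $[\tau,t_k]$ — directly, and, against a fixed $Q^v_\tau$, through its non-anticipativity — this recipe is simultaneously a non-anticipative strategy $Q^u_\tau$ of player I and a legitimate open-loop response of player I to any $Q^v_\tau$. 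By \cref{cond:f_sublinear_growth} and the Gronwall lemma all motions thus generated stay, for every opponent realisation and every partition, in a fixed set $P(\alpha_\ast)$, $\alpha_\ast=\alpha_\ast(\tau,z,w(\cdot))$, so the uniform continuity of $f,f^0$ on the relevant compact set and the constants $\lambda_f(\alpha_\ast),\lambda_\sigma(\alpha_\ast),\lambda_\varphi(\alpha_\ast)$ are available. Estimating on each step $\varphi(t_{k+1},x(t_{k+1}),x_{t_{k+1}}(\cdot))-\varphi(t_k,x(t_k),x_{t_k}(\cdot))$ by ci-differentiability at $(t_k,x(t_k),x_{t_k}(\cdot))$, replacing $f,f^0$ on $[t_k,t_{k+1})$ by their values at $t_k$ up to a modulus of continuity, invoking \cref{Hamilton-Jacobi_equation}, and summing, one arrives at $\gamma\le\varphi(\tau,z,w(\cdot))+\omega(\Delta)$ uniformly in the opponent, with $\omega(\Delta)\to0$ as $\Delta\to0$; choosing $\Delta$ so that $\omega(\Delta)\le\varepsilon$ gives $\rho^u\le\varphi$ (reading the recipe as $Q^u_\tau$) and $\rho^v\le\varphi$ (reading it as a response). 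The mirror construction yields $\varphi\le\rho^u$ and $\varphi\le\rho^v$, and altogether $\rho^u=\rho^v=\varphi$.

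I expect the principal obstacle to be the uniform control, over the $\sim1/\Delta$ partition points, of the ci-differentiability remainders: each is individually negligible, but a priori depends on its base point, so their sum need not vanish with $\Delta$. Following \cite{Plaksin_2019, Lukoyanov_2000, Lukoyanov_2010b}, the remedy uses the reduction in \cref{rem:ci-differentiable} to right extensions by a constant together with the choice of the history space $\mathrm{PC}$: compare the realised motion with the piecewise-constant motions, absorb the ``state part'' of the increments of $\varphi$ into the Lipschitz estimate \cref{phi_lipshiz_continuous} on $P(\alpha_\ast)$, and leave the remainder acting only on increments of $\varphi$ along constant extensions, where an equicontinuity argument over the compact family of points that actually occur turns the accumulated error into a genuine $\omega(\Delta)$. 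The remaining ingredients — existence, uniqueness and the Gronwall bound for motions under \cref{cond:f_continuous,cond:f_sublinear_growth,cond:f_lipshiz_continuous}, the measurable selection of $u_k$, and, where convenient, the inclusions $\rho^u,\rho^v\in\Phi$ of \cref{lem:rho_in_Phi} — are standard or already recorded above.
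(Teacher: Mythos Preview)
The paper takes a quite different route: it shows that a ci-differentiable $\varphi$ satisfying \cref{Hamilton-Jacobi_equation,terminal_condition} is automatically a viscosity solution (\cref{prop:smooth_solution_is_viscosity_solution}, which is immediate from the description of $D^\pm\varphi$ in \cref{prop:subdifferentials_and_ci-differentials} together with the criterion in \cref{prop:subdifferential_inequalities}), and then invokes the uniqueness part of \cref{theorem} to conclude $\varphi=\rho$. All the work is in the comparison argument behind \cref{uniqueness_of_viscosity solution}; no strategy is ever constructed.

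Your direct verification via extremal aiming is the natural classical approach, and the chain-rule step is correct: from pointwise ci-differentiability and the bound $\|\nabla_z\varphi\|\le\lambda_\varphi$ one does get that $g$ is Lipschitz with $\dot g(t)=\langle f,\nabla_z\varphi\rangle+f^0-H$ a.e. The gap is in the strategy construction. Your piecewise-constant rule picks $u_k$ optimally \emph{for the gradient $s_k=\nabla_z\varphi|_{t_k}$}; to get $\dot g\le o(1)$ on $[t_k,t_{k+1})$ you need either $\nabla_z\varphi|_t\approx s_k$ along the step (if you use the continuous chain rule) or the ci-remainders $o_k(\Delta)$ to be uniform in $k$ (if you use the discrete increment formula). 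Both amount to a continuity/equicontinuity property of $\nabla_z\varphi$ along motions, which is \emph{not} assumed here --- the proposition requires only pointwise ci-differentiability. Your proposed ``equicontinuity over the compact family of points that actually occur'' is precisely this missing hypothesis: \cref{rem:ci-differentiable} makes each $o_k$ depend only on the base point $(t_k,x(t_k),x_{t_k}(\cdot))$, but those base points vary with the partition and with $v(\cdot)$, and pointwise differentiability gives no uniform rate over such a family. (Already in finite dimensions a function can be everywhere differentiable with bounded but discontinuous gradient, and then the discrete extremal-aiming feedback need not be $\varepsilon$-optimal as $\Delta\to0$.) Verification theorems in the style you sketch typically assume \emph{continuous} (ci-)derivatives; under the weaker hypothesis of this proposition, the paper's detour through viscosity uniqueness appears to be genuinely needed.
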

These propositions follow from \cref{prop:smooth_solution_is_viscosity_solution,prop:viscosity_solution_is_smooth_solution,theorem} below.

For the cases when $\rho$ is not ci-differentiable, we will consider generalized (viscosity) solutions of problem \cref{Hamilton-Jacobi_equation,terminal_condition}. But let us first make an auxiliary remark.

\begin{remark}
Let $\varphi \in \Phi$ be ci-differentiable at a point $(\tau,z,w(\cdot)) \in \mathbb G$, $\tau < \vartheta$ and satisfy HJ equation \cref{Hamilton-Jacobi_equation} at this point. Define the function
\begin{equation}\label{tilde_phi}
\tilde{\varphi}(t,x) = \varphi(t,x,\kappa_t(\cdot)),\quad (t,x) \in \mathbb [\tau,\vartheta] \times \mathbb R^n,\quad \kappa(\cdot) \in \Lambda_0(\tau,z,w(\cdot)).
\end{equation}
Then one can show that at the point $(\tau,z)$ the function $\tilde{\varphi}$ has a right partial derivative $\partial^+ \tilde{\varphi}(\tau,z) / \partial \tau$ and a gradient $\nabla_z \tilde{\varphi}(\tau,z)$, and satisfies the following HJ equation:
\begin{equation}\label{usual_HJ}
\partial^+ \tilde{\varphi}(\tau,z) / \partial \tau + H(\tau,z,w(\cdot),\nabla_z \tilde{\varphi}(\tau,z)) = 0.
\end{equation}
\end{remark}
Thus, we might say that HJ equation with a ci-derivative \cref{Hamilton-Jacobi_equation} is locally HJ equation with partial derivatives \cref{usual_HJ}.
Then, based on the classical definition of viscosity solutions \cite{Crandall_Lions_1983} (see also \cite{Bardi_Capuzzo-Dolcetta_1997, Evans_1998}), it leads us in a natural way to the following definition of a viscosity solution of problem \cref{Hamilton-Jacobi_equation,terminal_condition}.

\begin{definition}\label{def:viscosity_solution}
A functional $\varphi \in \Phi$ is called a viscosity solution of problem \cref{Hamilton-Jacobi_equation,terminal_condition}, if it satisfies terminal condition \cref{terminal_condition} and
the following properties:
\begin{subequations}
\begin{gather}
\left\{
\begin{array}{ll}
\text{for every}\ (\tau,z,w(\cdot)) \in \mathbb G,\ \tau < \vartheta,\ \psi \in \mathrm{C}^1(\mathbb R \times \mathbb R^n,\mathbb R), \text{ and } \delta > 0,\\[0.2cm]
\text{if}\ \varphi(\tau,z,w(\cdot)) - \psi(\tau,z) \leq \varphi(t,x,\kappa_t(\cdot)) - \psi(t,x),\ (t,x) \in O_\delta^+(\tau,z),\\[0.2cm]
\text{then}\ \partial \psi(\tau,z) / \partial \tau + H(\tau,z,w(\cdot),\nabla_z \psi(\tau,z)) \leq 0,
\end{array} \right.\label{sub_viscosity_solution}\\[0.4cm]
\left\{
\begin{array}{ll}
\text{for every}\ (\tau,z,w(\cdot)) \in \mathbb G,\ \tau < \vartheta,\ \psi \in \mathrm{C}^1(\mathbb R \times \mathbb R^n,\mathbb R), \text{ and } \delta > 0,\\[0.2cm]
\text{if}\ \varphi(\tau,z,w(\cdot)) - \psi(\tau,z) \geq \varphi(t,x,\kappa_t(\cdot)) - \psi(t,x),\ (t,x) \in O_\delta^+(\tau,z),\\[0.2cm]
\text{then}\ \partial \psi(\tau,z) / \partial \tau + H(\tau,z,w(\cdot),\nabla_z \psi(\tau,z)) \geq 0.
\end{array}\label{super_viscosity_solution} \right.
\end{gather}
\end{subequations}
Here $\kappa(\cdot) \in \Lambda_0(\tau,z,w(\cdot))$,  $O^+_\delta(\tau,z) = \{(t,x) \in [\tau,\tau+\delta] \times \mathbb R^n\colon \|x - z\| \leq \delta\}$, and $\mathrm{C}^1(\mathbb R \times \mathbb R^n, \mathbb R)$ is the space of continuously differentiable functions from $\mathbb R \times \mathbb R^n$ to $\mathbb R$.
\end{definition}

Note that this definition consistent with a ci-dif\-fe\-ren\-tiable solution of problem \cref{Hamilton-Jacobi_equation,terminal_condition}. The following two propositions establish it.

\begin{proposition}\label{prop:viscosity_solution_is_smooth_solution}
Let a functional $\varphi \in \Phi$ be a viscosity solution of problem \cref{Hamilton-Jacobi_equation,terminal_condition}. If $\varphi$ is ci-differentiable at a point $(\tau,z,w(\cdot)) \in \mathbb G$, $\tau < \vartheta$, then $\varphi$ satisfies HJ equation \cref{Hamilton-Jacobi_equation} at this point.
\end{proposition}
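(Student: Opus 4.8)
The plan is to reduce the statement to the classical device behind ``differentiable viscosity solutions solve the equation classically,'' namely to build, out of the ci-differential of $\varphi$, two $\mathrm{C}^1$ test functions touching $\varphi$ from below and from above at the point in question with the correct gradient. Fix $(\tau,z,w(\cdot))\in\mathbb G$, $\tau<\vartheta$, at which $\varphi$ is ci-differentiable, let $\kappa(\cdot)$ be the (unique) element of $\Lambda_0(\tau,z,w(\cdot))$, and set $\tilde\varphi(t,x)=\varphi(t,x,\kappa_t(\cdot))$, $a=\partial^{ci}_{\tau,w}\varphi(\tau,z,w(\cdot))$, $p=\nabla_z\varphi(\tau,z,w(\cdot))$. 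By \cref{rem:ci-differentiable} there is a continuous nondecreasing $\omega\colon[0,\infty)\to[0,\infty)$ with $\omega(0)=0$ and $\omega(r)/r\to0$ as $r\to+0$ such that
\[
\big|\tilde\varphi(t,x)-\tilde\varphi(\tau,z)-a(t-\tau)-\langle p,x-z\rangle\big|\le\omega\big(|t-\tau|+\|x-z\|\big)
\]
for $(t,x)$ in a neighbourhood of $(\tau,z)$ with $t\ge\tau$ (one first reads off such an estimate from the $o(\cdot)$-term, using that $\varphi\in\Phi$ is locally Lipschitz to keep it finite, then majorizes by a continuous modulus in the standard way). The goal is $a+H(\tau,z,w(\cdot),p)=0$.

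The next step is to manufacture a single $\mathrm{C}^1$ correction that dominates this sublinear remainder but has vanishing gradient at $(\tau,z)$. Put $r(t,x)=\sqrt{(t-\tau)^2+\|x-z\|^2}$ and choose $\eta\in\mathrm{C}^1([0,\infty))$ with $\eta(0)=\eta'(0)=0$, $\eta\ge0$, and $\eta(r)\ge\omega(\sqrt2\,r)$ for all small $r\ge0$ — for instance $\eta(r)=\tfrac2r\int_r^{2r}\omega(\sqrt2\,s)\,\mathrm ds$ for $r>0$ and $\eta(0)=0$, the routine ``smoothing of a modulus.'' Since $\eta'(0)=0$ gives $\eta(s)=o(s)$ as $s\to+0$, the map $(t,x)\mapsto\eta(r(t,x))$ is $\mathrm{C}^1$ on all of $\mathbb R\times\mathbb R^n$: away from $(\tau,z)$ it is a composition of $\mathrm{C}^1$ maps, and at $(\tau,z)$ its gradient exists, equals $0$, and is continuous there because $\|\nabla[\eta\circ r]\|=|\eta'(r)|\to0$. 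Define
\[
\psi^{\pm}(t,x)=\tilde\varphi(\tau,z)+a(t-\tau)+\langle p,x-z\rangle\pm\eta\big(r(t,x)\big),\qquad(t,x)\in\mathbb R\times\mathbb R^n,
\]
so that $\psi^{\pm}\in\mathrm{C}^1(\mathbb R\times\mathbb R^n,\mathbb R)$, $\partial\psi^{\pm}(\tau,z)/\partial\tau=a$, and $\nabla_z\psi^{\pm}(\tau,z)=p$.

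Finally, one feeds $\psi^{-}$ and $\psi^{+}$ into the two halves of \cref{def:viscosity_solution}. Using $|t-\tau|+\|x-z\|\le\sqrt2\,r(t,x)$, monotonicity of $\omega$, and $\eta(r(t,x))\ge\omega(\sqrt2\,r(t,x))$, for $(t,x)$ in a small forward set $O^+_\delta(\tau,z)$ one obtains
\[
\varphi(t,x,\kappa_t(\cdot))-\psi^{-}(t,x)\ge-\omega\big(|t-\tau|+\|x-z\|\big)+\eta\big(r(t,x)\big)\ge0=\varphi(\tau,z,w(\cdot))-\psi^{-}(\tau,z),
\]
so the premise of \cref{sub_viscosity_solution} holds with $\psi=\psi^{-}$ and this $\delta$, whence $a+H(\tau,z,w(\cdot),p)=\partial\psi^{-}(\tau,z)/\partial\tau+H(\tau,z,w(\cdot),\nabla_z\psi^{-}(\tau,z))\le0$. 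Symmetrically, $\varphi(t,x,\kappa_t(\cdot))-\psi^{+}(t,x)\le0=\varphi(\tau,z,w(\cdot))-\psi^{+}(\tau,z)$ on $O^+_\delta(\tau,z)$, so the premise of \cref{super_viscosity_solution} holds with $\psi=\psi^{+}$, giving $a+H(\tau,z,w(\cdot),p)\ge0$. Combining the two inequalities yields $a+H(\tau,z,w(\cdot),p)=0$, i.e.\ HJ equation \cref{Hamilton-Jacobi_equation} at $(\tau,z,w(\cdot))$.

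I expect the only genuinely non-routine point to be the construction of the $\mathrm{C}^1$ correction $\eta$: it must simultaneously dominate the sublinear remainder $\omega$ and keep $\nabla\psi^{\pm}(\tau,z)=(a,p)$, which forces its derivative at the origin to vanish — this is exactly the classical mechanism making \cref{def:viscosity_solution} compatible with ci-differentiability (cf.\ the handling of smooth test functions in \cite{Bardi_Capuzzo-Dolcetta_1997,Crandall_Lions_1983}). Everything else reduces to bookkeeping with the ci-differential expansion of \cref{rem:ci-differentiable} and with the elementary estimate $|t-\tau|+\|x-z\|\le\sqrt2\,r(t,x)$.
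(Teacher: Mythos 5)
Your argument is correct, but it takes a different (more self-contained) route than the paper. The paper obtains \cref{prop:viscosity_solution_is_smooth_solution} as an immediate corollary of two pieces of machinery it develops anyway: \cref{prop:subdifferential_inequalities}, which replaces the test-function properties \cref{sub_viscosity_solution,super_viscosity_solution} by the inequalities \cref{subdifferential_viscosity_solution,superdifferential_viscosity_solution} for $D^\mp\varphi$, and \cref{prop:subdifferentials_and_ci-differentials}, which shows that at a point of ci-differentiability the pair $(\partial^{ci}_{\tau,w}\varphi,\nabla_z\varphi)$ belongs to $D^-\varphi\cap D^+\varphi$; the two inequalities then force $\partial^{ci}_{\tau,w}\varphi+H(\cdot,\nabla_z\varphi)=0$. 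You instead build the $\mathrm{C}^1$ test functions $\psi^\pm$ by hand, smoothing the modulus of the $o(\cdot)$ remainder from \cref{rem:ci-differentiable} into a $\mathrm{C}^1$ correction $\eta$ with $\eta(0)=\eta'(0)=0$ that dominates it; this is essentially an explicit, special-case version of the construction hidden inside \cref{lem:sub_viscosity_solution_and_subdifferential_viscosity_solution}, where the paper invokes Crandall--Lions for the existence of such a test function rather than writing one down. The details check out: the sup-based modulus is nondecreasing and $o(r)$, your averaged $\eta$ satisfies $\eta(r)\ge 2\omega(\sqrt2\,r)\ge\omega(\sqrt2\,r)$ and is $\mathrm{C}^1$ with $\eta'(0)=0$, the bound $|t-\tau|+\|x-z\|\le\sqrt2\,r(t,x)$ is Cauchy--Schwarz, and the sign conventions in \cref{def:viscosity_solution} are applied correctly ($\psi^-$ makes $(\tau,z)$ a forward minimum of $\varphi-\psi^-$, triggering \cref{sub_viscosity_solution}, and $\psi^+$ a forward maximum, triggering \cref{super_viscosity_solution}). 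What your route buys is independence from \cref{prop:subdifferential_inequalities}; what it costs is redoing, in this one instance, the test-function construction that the paper's general equivalence lemma already provides.
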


\begin{proposition}\label{prop:smooth_solution_is_viscosity_solution}
Let a functional $\varphi \in \Phi$ be ci-differentiable at every point $(\tau,z,w(\cdot)) \in \mathbb G$, $\tau < \vartheta$, satisfy HJ equation \cref{Hamilton-Jacobi_equation} at these points and satisfy terminal condition \cref{terminal_condition}. Then $\varphi$ is a viscosity solution of problem \cref{Hamilton-Jacobi_equation,terminal_condition}.
\end{proposition}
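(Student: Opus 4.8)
The plan is to reduce the statement to the ``frozen-history slice'' $\tilde\varphi$ introduced in \cref{tilde_phi} and then to run the standard argument that a classical solution of a Hamilton--Jacobi equation is a viscosity solution. Since $\varphi$ is assumed to satisfy terminal condition \cref{terminal_condition}, I only need to verify the two test-function properties \cref{sub_viscosity_solution} and \cref{super_viscosity_solution}.

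I would fix $(\tau,z,w(\cdot)) \in \mathbb G$ with $\tau < \vartheta$, a function $\psi \in \mathrm{C}^1(\mathbb R \times \mathbb R^n,\mathbb R)$, a number $\delta > 0$, and $\kappa(\cdot) \in \Lambda_0(\tau,z,w(\cdot))$, and set $\tilde\varphi(t,x) = \varphi(t,x,\kappa_t(\cdot))$ as in \cref{tilde_phi}. Since $\varphi \in \Phi$ is ci-differentiable at $(\tau,z,w(\cdot))$, \cref{rem:ci-differentiable} gives the expansion
\[
\tilde\varphi(t,x) - \tilde\varphi(\tau,z) = (t-\tau)\,\partial^{ci}_{\tau,w}\varphi(\tau,z,w(\cdot)) + \langle x - z,\nabla_z\varphi(\tau,z,w(\cdot))\rangle + o(|t-\tau| + \|x - z\|)
\]
for $t \in [\tau,\vartheta]$ and $x \in \mathbb R^n$, where $\kappa_\tau(\cdot) = w(\cdot)$ so that $\tilde\varphi(\tau,\cdot) = \varphi(\tau,\cdot,w(\cdot))$. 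Consequently $\tilde\varphi(\tau,\cdot)$ is differentiable at $z$ with gradient $\nabla_z\varphi(\tau,z,w(\cdot))$, and $t \mapsto \tilde\varphi(t,z)$ has a right derivative at $\tau$ equal to $\partial^{ci}_{\tau,w}\varphi(\tau,z,w(\cdot))$ (equivalently, by the remark preceding \cref{def:viscosity_solution}, $\tilde\varphi$ satisfies \cref{usual_HJ} at $(\tau,z)$); note that none of these objects depends on the choice of $\kappa(\cdot) \in \Lambda_0(\tau,z,w(\cdot))$, and that \cref{Hamilton-Jacobi_equation} for $\varphi$ reads $\partial^{ci}_{\tau,w}\varphi(\tau,z,w(\cdot)) + H(\tau,z,w(\cdot),\nabla_z\varphi(\tau,z,w(\cdot))) = 0$.

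For \cref{sub_viscosity_solution} I would assume $\varphi(\tau,z,w(\cdot)) - \psi(\tau,z) \le \varphi(t,x,\kappa_t(\cdot)) - \psi(t,x)$ on $O^+_\delta(\tau,z)$, i.e. that $\tilde\varphi - \psi$ attains its minimum over $O^+_\delta(\tau,z)$ at $(\tau,z)$, and then argue along the two coordinate slices. Freezing $t = \tau$, the map $x \mapsto \tilde\varphi(\tau,x) - \psi(\tau,x)$ is differentiable at $z$ and $z$ is an interior minimizer on the ball $\{x \in \mathbb R^n \colon \|x-z\|\le\delta\}$, hence $\nabla_z\psi(\tau,z) = \nabla_z\varphi(\tau,z,w(\cdot))$. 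Freezing $x = z$, the map $t \mapsto \tilde\varphi(t,z) - \psi(t,z)$ is minimized over $[\tau,\tau+\delta]$ at the left endpoint $t = \tau$ and both summands possess a right derivative there, hence $\partial\psi(\tau,z)/\partial\tau \le \partial^{ci}_{\tau,w}\varphi(\tau,z,w(\cdot))$. Combining,
\[
\frac{\partial\psi(\tau,z)}{\partial\tau} + H(\tau,z,w(\cdot),\nabla_z\psi(\tau,z)) \le \partial^{ci}_{\tau,w}\varphi(\tau,z,w(\cdot)) + H(\tau,z,w(\cdot),\nabla_z\varphi(\tau,z,w(\cdot))) = 0,
\]
which is \cref{sub_viscosity_solution}. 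Property \cref{super_viscosity_solution} follows by the mirror argument: if $\tilde\varphi - \psi$ has a maximum over $O^+_\delta(\tau,z)$ at $(\tau,z)$, the $z$-slice again forces $\nabla_z\psi(\tau,z) = \nabla_z\varphi(\tau,z,w(\cdot))$, the $t$-slice now gives the reversed inequality $\partial\psi(\tau,z)/\partial\tau \ge \partial^{ci}_{\tau,w}\varphi(\tau,z,w(\cdot))$, and the same substitution into the Hamilton--Jacobi equation yields $\partial\psi(\tau,z)/\partial\tau + H(\tau,z,w(\cdot),\nabla_z\psi(\tau,z)) \ge 0$.

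I do not expect a genuine obstacle here; this is the classical ``smooth $\Rightarrow$ viscosity'' implication transplanted to the slice $\tilde\varphi$. The two points that need a little care are: (i) the base point $(\tau,z)$ sits on the left time-boundary of the one-sided neighbourhood $O^+_\delta(\tau,z)$, so comparison along the $\tau$-direction produces only a one-sided inequality for $\partial^{ci}_{\tau,w}\varphi$ — which is precisely why \cref{def:viscosity_solution} asks for ``$\le 0$'' in \cref{sub_viscosity_solution} and ``$\ge 0$'' in \cref{super_viscosity_solution} rather than equality; and (ii) the passage from ci-differentiability of $\varphi$ to ordinary differentiability of $\tilde\varphi$ (and to the validity of \cref{usual_HJ}) is exactly what \cref{rem:ci-differentiable} and the remark preceding \cref{def:viscosity_solution} supply, so both can be invoked directly.
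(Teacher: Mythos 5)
Your proof is correct. The two slice arguments are sound: restricting the ci-differentiability expansion from \cref{rem:ci-differentiable} to $t=\tau$ shows $x\mapsto\tilde\varphi(\tau,x)$ is differentiable at the interior minimizer (resp.\ maximizer) $z$ of the ball, forcing $\nabla_z\psi(\tau,z)=\nabla_z\varphi(\tau,z,w(\cdot))$; restricting to $x=z$ gives the one-sided comparison of right time-derivatives at the left endpoint, yielding $\partial\psi(\tau,z)/\partial\tau\leq\partial^{ci}_{\tau,w}\varphi(\tau,z,w(\cdot))$ (reversed for the max), and substitution into \cref{Hamilton-Jacobi_equation} finishes. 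Where you differ from the paper is purely in packaging: the paper does not verify \cref{sub_viscosity_solution,super_viscosity_solution} directly, but instead routes the proposition through \cref{prop:subdifferential_inequalities} (the sub-/superdifferential criterion for viscosity solutions) together with \cref{prop:subdifferentials_and_ci-differentials} (the computation of $D^\pm\varphi$ at points of ci-differentiability), so that the conclusion becomes a one-line consequence of $p=\nabla_z\varphi$, $p_0\leq\partial^{ci}_{\tau,w}\varphi$ and the equation. Your route is self-contained and only uses the ``easy'' half of that machinery implicitly — in effect you re-derive inline the inclusion $(\partial\psi(\tau,z)/\partial\tau,\nabla_z\psi(\tau,z))\in D^-\varphi(\tau,z,w(\cdot))$ and the structure of $D^\pm\varphi$ at a ci-differentiable point — which makes the proposition independent of \cref{prop:subdifferential_inequalities}; the paper's route buys reuse of results it needs anyway for the infinitesimal criterion. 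Your two cautionary remarks (the one-sidedness of $O^+_\delta$ explaining the inequalities, and the role of \cref{rem:ci-differentiable}) are exactly the right points to flag.
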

These propositions follow from \cref{theorem,prop:subdifferential_inequalities,prop:subdifferentials_and_ci-differentials} below.

Let us give the main result of the paper.

\begin{theorem}\label{theorem}
Differential game \cref{dynamical_system,initial_condition,quality_index} has the value functional $\rho$ that is the unique viscosity solution of problem \cref{Hamilton-Jacobi_equation,terminal_condition}.
\end{theorem}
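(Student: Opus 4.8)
The proof separates into an \emph{existence} part (the value functionals solve the problem) and a \emph{uniqueness} part (a comparison principle), and the uniqueness part hinges on recasting the infinitesimal conditions \cref{sub_viscosity_solution,super_viscosity_solution} into a trajectory-wise form compatible with non-anticipative strategies, a recasting carried out through the sub- and superdifferentials associated with the ci-derivative (\cref{prop:subdifferential_inequalities,prop:subdifferentials_and_ci-differentials} below).

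\emph{Existence.} By \cref{lem:rho_in_Phi} we have $\rho^u,\rho^v\in\Phi$, and by \cref{quality_index,rho_u,rho_v} both satisfy terminal condition \cref{terminal_condition}. I would then establish the dynamic programming principle: for every $(\tau,z,w(\cdot))\in\mathbb G$ with $\tau<\vartheta$ and every $\delta\in(0,\vartheta-\tau]$,
\[
\rho^u(\tau,z,w(\cdot)) = \inf_{Q^u_\tau}\,\sup_{v(\cdot)\in\mathcal{V}_\tau}\,\biggl(\int_\tau^{\tau+\delta} f^0\bigl(t,x(t),x_t(\cdot),Q^u_\tau[v(\cdot)](t),v(t)\bigr)\,\mathrm{d} t + \rho^u\bigl(\tau+\delta,x(\tau+\delta),x_{\tau+\delta}(\cdot)\bigr)\biggr),
\]
and the symmetric identity for $\rho^v$; this is the usual concatenation argument for non-anticipative strategies and uses only \cref{cond:f_continuous,cond:f_sublinear_growth,cond:f_lipshiz_continuous}, under which the motion of \cref{dynamical_system,initial_condition} is well defined. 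To verify \cref{sub_viscosity_solution,super_viscosity_solution} for $\varphi=\rho^u$ at a point $(\tau,z,w(\cdot))$, I fix $\psi\in\mathrm{C}^1(\mathbb R\times\mathbb R^n,\mathbb R)$ and $\kappa(\cdot)\in\Lambda_0(\tau,z,w(\cdot))$ as in \cref{def:viscosity_solution}, substitute the one-sided extremality hypothesis into the dynamic programming identity, divide by $\delta$, and pass to the limit $\delta\to+0$; the passage is controlled by the estimates $\|x(t)-z\|=O(\delta)$, hence $\|x_t(\cdot)-\kappa_t(\cdot)\|_1=O(\delta^2)$ on $[\tau,\tau+\delta]$, which follow from \cref{cond:f_sublinear_growth,cond:f_lipshiz_continuous}, together with the continuity of $f$ and $f^0$ in $(t,u,v)$ (\cref{cond:f_continuous}). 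The limiting inequality involves $\min_u\max_v(\langle f,\nabla_z\psi\rangle+f^0)$, which is $H$ by \cref{Hamiltonian}; here the saddle-point identity \cref{cond:f_saddle_point} enters decisively, since without it $\rho^u$ and $\rho^v$ would be viscosity solutions of the equations with the lower and upper Hamiltonians $\max_v\min_u(\langle f,\cdot\rangle+f^0)$ and $\min_u\max_v(\langle f,\cdot\rangle+f^0)$, and \cref{cond:f_saddle_point} is exactly the statement that these coincide with $H$. Consequently both $\rho^u$ and $\rho^v$ are viscosity solutions of \cref{Hamilton-Jacobi_equation,terminal_condition}.

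\emph{Uniqueness.} The central step is the comparison lemma: if $\varphi_1\in\Phi$ satisfies \cref{sub_viscosity_solution}, $\varphi_2\in\Phi$ satisfies \cref{super_viscosity_solution}, and $\varphi_1(\vartheta,z,w(\cdot))=\varphi_2(\vartheta,z,w(\cdot))$ for all $(z,w(\cdot))$, then $\varphi_1(\tau,z,w(\cdot))\ge\varphi_2(\tau,z,w(\cdot))$ for all $(\tau,z,w(\cdot))\in\mathbb G$. To prove it I would use \cref{prop:subdifferential_inequalities,prop:subdifferentials_and_ci-differentials} to rewrite \cref{sub_viscosity_solution} as a stability property of $\varphi_1$ relative to the first player: for each $(\tau,z,w(\cdot))\in\mathbb G$ and each $v(\cdot)\in\mathcal{V}_\tau$ there is a non-anticipative strategy $Q^u_\tau$ of the first player such that, along the generated motion $x(\cdot)=x(\cdot\,|\,\tau,z,w(\cdot),Q^u_\tau[v(\cdot)](\cdot),v(\cdot))$,
\[
\varphi_1\bigl(t,x(t),x_t(\cdot)\bigr) + \int_\tau^t f^0\bigl(s,x(s),x_s(\cdot),Q^u_\tau[v(\cdot)](s),v(s)\bigr)\,\mathrm{d} s \le \varphi_1(\tau,z,w(\cdot)),\qquad t\in[\tau,\vartheta],
\]
and, dually, \cref{super_viscosity_solution} as the corresponding stability of $\varphi_2$ relative to the second player, with a non-anticipative strategy $Q^v_\tau$ and the reversed inequality; the construction of $Q^u_\tau$ proceeds by an Euler-type step-by-step scheme that keeps the motion close to the current right extension and at each step selects the first player's control realizing the elementary bound $\min_{u\in\mathbb U}(\langle f,s\rangle+f^0)\le H$ (all evaluated at the current point) along a current subgradient $s$ of $\varphi_1$ furnished by \cref{prop:subdifferential_inequalities}. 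Given these reformulations, fix $(\tau,z,w(\cdot))$; since $Q^u_\tau$ and $Q^v_\tau$ are non-anticipative, there is a unique pair $u^*(\cdot)=Q^u_\tau[v^*(\cdot)](\cdot)$, $v^*(\cdot)=Q^v_\tau[u^*(\cdot)](\cdot)$ and a unique common motion $x^*(\cdot)$; subtracting the two stability inequalities written along $x^*(\cdot)$ cancels the integrals of $f^0$ and gives $\varphi_1\bigl(t,x^*(t),x^*_t(\cdot)\bigr)-\varphi_2\bigl(t,x^*(t),x^*_t(\cdot)\bigr)\le\varphi_1(\tau,z,w(\cdot))-\varphi_2(\tau,z,w(\cdot))$ for $t\in[\tau,\vartheta]$, and at $t=\vartheta$ the left-hand side is $\sigma\bigl(x^*(\vartheta),x^*_\vartheta(\cdot)\bigr)-\sigma\bigl(x^*(\vartheta),x^*_\vartheta(\cdot)\bigr)=0$, whence $\varphi_1(\tau,z,w(\cdot))\ge\varphi_2(\tau,z,w(\cdot))$. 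Finally, since $\rho^u$ and $\rho^v$ are viscosity solutions and hence satisfy both \cref{sub_viscosity_solution,super_viscosity_solution}, applying the comparison lemma to $(\rho^u,\rho^v)$ and to $(\rho^v,\rho^u)$ gives $\rho^u\ge\rho^v$ and $\rho^v\ge\rho^u$, so the game has the value functional $\rho:=\rho^u=\rho^v$, which is a viscosity solution by the existence part; and for an arbitrary viscosity solution $\varphi$, the lemma applied to $(\varphi,\rho)$ and $(\rho,\varphi)$ yields $\varphi=\rho$, which is uniqueness.

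\emph{Main obstacle.} I expect the decisive difficulty to be the reformulation of \cref{sub_viscosity_solution,super_viscosity_solution} as the trajectory-wise stability properties and the verification (via \cref{prop:subdifferential_inequalities,prop:subdifferentials_and_ci-differentials}) that the value functionals and all candidate solutions possess them. In contrast with the finite-dimensional theory, the test functions in \cref{def:viscosity_solution} are smooth only in the finite-dimensional pair $(t,x)$, while the functionals depend on an infinite-dimensional history; the forward, one-sided character of the extremality conditions (over $O^+_\delta(\tau,z)$) forces every limiting and every step-by-step argument to be run on forward time intervals with extra care near $\tau=\vartheta$; and all error terms produced when the actual history $x_t(\cdot)$ is replaced by a frozen right extension $\kappa_t(\cdot)$ must be absorbed using precisely the $\|\cdot\|_1$-type Lipschitz bounds in \cref{cond:f_lipshiz_continuous,cond:sigma_lipshiz_continuous} and \cref{phi_lipshiz_continuous}. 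It is at this point that the choice of the space $\mathrm{PC}$ of piecewise continuous histories is exploited: on $\mathrm{PC}$ every relevant history is attained by a Lipschitz right extension from $\Lambda(\tau,z,w(\cdot))$, along which these estimates are available and the step-by-step constructions can be carried out.
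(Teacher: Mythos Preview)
Your existence argument is essentially the paper's: both go through the dynamic programming/stability principle to show that $\rho^u$ and $\rho^v$ are viscosity solutions (this is \cref{lem:rho_is_stable} together with \cref{lem:rho_is_viscosity_solution}).

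For uniqueness the paper takes a completely different route. It proves \cref{uniqueness_of_viscosity solution} by a direct doubling-of-variables argument \`a la Crandall--Lions/Bardi--Capuzzo-Dolcetta: via the preparatory \cref{lem:zeta_star,lem:taus_zs_ws_nus,lem:tau_z_nu,lem:Omega,lem:eta} it localizes the problem to a finite-dimensional cone $\Omega^\beta\subset[\tau,\nu]\times\mathbb R^n$ on which the auxiliary functions $\tilde\varphi_i(t,x)=\varphi_i(t,x,\kappa_t(\cdot))$ are continuous, introduces the penalized functional
\[
\chi_\varepsilon(t,x,\xi,y)=\varphi_1(t,x,\kappa_t(\cdot))-\varphi_2(\xi,y,\kappa_\xi(\cdot))-\tfrac{1}{\varepsilon}\bigl(|t-\xi|^2+\|x-y\|^2\bigr)-\cdots,
\]
and derives a contradiction from the two viscosity inequalities evaluated at its maximizer. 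No stability properties, no Euler schemes, no strategies enter the uniqueness proof at all.

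Your route---translate \cref{sub_viscosity_solution,super_viscosity_solution} into the $u$- and $v$-stability inequalities of \cref{def:stable} and then compare along a common motion---is the minimax-solution approach of Subbotin/Lukoyanov and is in principle a valid alternative. But your sketch of the crucial implication ``\cref{sub_viscosity_solution} $\Rightarrow$ $u$-stability'' has a genuine gap. You propose, at each Euler step, to select $(p_0,p)\in D^-\varphi_1$ and then a control $u$ realizing the minimum in $H(\cdot,p)$; however, for a merely Lipschitz functional $\varphi_1\in\Phi$ the subdifferential $D^-\varphi_1(\tau,z,w(\cdot))$ is typically \emph{empty}, so there is nothing to select, and \cref{prop:subdifferential_inequalities} then carries no information at that point. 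The passage from the infinitesimal condition \cref{subdifferential_viscosity_solution} to the trajectory-wise inequality you need does not go through subgradients but through the lower directional derivatives \cref{direct_deriv} together with a viability/selection argument, and that step is itself of essentially the same difficulty as the comparison principle you are trying to deduce from it. A secondary issue: given two non-anticipative strategies $Q^u_\tau$ and $Q^v_\tau$, the existence of a unique fixed pair $(u^*,v^*)$ with $u^*=Q^u_\tau[v^*]$, $v^*=Q^v_\tau[u^*]$ is not automatic and would require its own justification.
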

The theorem is based on two results proved in the paper: 1) the functionals $\rho^u$ and $\rho^v$ are viscosity solutions of problem \cref{Hamilton-Jacobi_equation,terminal_condition} (see \cref{lem:rho_is_viscosity_solution}); 2) problem \cref{Hamilton-Jacobi_equation,terminal_condition} has a unique viscosity solution (see \cref{uniqueness_of_viscosity solution}).

Next, similarly to the classical theory of viscosity solutions of HJ equations (see, e.g., \cite{Bardi_Capuzzo-Dolcetta_1997, Crandall_Evans_Lions_1984}), we introduce  notions of sub- and superdifferentials of functionals and, based on them, we give the infinitesimal description of the viscosity solution of problem \cref{Hamilton-Jacobi_equation,terminal_condition}.

The subdifferential of a functional $\varphi \colon \mathbb G \mapsto \mathbb R$ at a point $(\tau,z,w(\cdot)) \in \mathbb G$, $\tau < \vartheta$ is a set, denoted by $D^-(\tau,z,w(\cdot))$, of pairs $(p_0,p) \in \mathbb R \times \mathbb R^n$ such that
\begin{equation}\label{subdifferential}
\lim\limits_{\delta \to 0} \inf\limits_{(t,x) \in O^+_\delta(\tau,z)}
\frac{\varphi(t,x,\kappa_t(\cdot)) - \varphi(\tau,z,w(\cdot)) - (t - \tau) p_0 - \langle x - z, p \rangle}{|t - \tau| + \|x - z\|} \geq 0.
\end{equation}
The superdifferential of a functional $\varphi \colon \mathbb G \mapsto \mathbb R$ at a point $(\tau,z,w(\cdot)) \in \mathbb G$, $\tau < \vartheta$ is a set, denoted by $D^+(\tau,z,w(\cdot))$, of pairs $(q_0,q) \in \mathbb R \times \mathbb R^n$ such that
\begin{equation}\label{superdifferential}
\lim\limits_{\delta \to 0} \sup\limits_{(t,x) \in O^+_\delta(\tau,z)}
\frac{\varphi(t,x,\kappa_t(\cdot)) - \varphi(\tau,z,w(\cdot)) - (t - \tau) q_0 - \langle x - z, q \rangle}{|t - \tau| + \|x - z\|} \leq 0.
\end{equation}

\begin{theorem}\label{prop:subdifferential_inequalities}
A functional $\varphi \in \Phi$ is a viscosity solution of problem \cref{Hamilton-Jacobi_equation,terminal_condition} if and only if it satisfies terminal condition \cref{terminal_condition} and the inequalities
\begin{subequations}
\begin{gather}
p_0 + H(\tau,z,w(\cdot),p) \leq 0,\quad (p_0,p) \in D^-\varphi(\tau,z,w(\cdot)),\label{subdifferential_viscosity_solution}\\[0.1cm]
q_0 + H(\tau,z,w(\cdot),q) \geq 0,\quad (q_0,q) \in D^+\varphi(\tau,z,w(\cdot)),\label{superdifferential_viscosity_solution}
\end{gather}
\end{subequations}
for any $(\tau,z,w(\cdot)) \in \mathbb G$, $\tau < \vartheta$.
\end{theorem}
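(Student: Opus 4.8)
The plan is to prove \cref{prop:subdifferential_inequalities} by reducing the test-function conditions of \cref{def:viscosity_solution} to the subdifferential conditions \cref{subdifferential_viscosity_solution,superdifferential_viscosity_solution} via the classical correspondence between $\mathrm{C}^1$ test functions and elements of the sub-/superdifferentials \cref{subdifferential,superdifferential}, carried out for the forward-in-time neighbourhoods $O^+_\delta(\tau,z)$ and with the constant right extension $\kappa(\cdot)\in\Lambda_0(\tau,z,w(\cdot))$ held fixed. Throughout fix $(\tau,z,w(\cdot))\in\mathbb G$ with $\tau<\vartheta$, let $\kappa(\cdot)$ be the unique element of $\Lambda_0(\tau,z,w(\cdot))$, and set $g(t,x):=\varphi(t,x,\kappa_t(\cdot))$, $(t,x)\in[\tau,\vartheta]\times\mathbb R^n$; note that $g(\tau,z)=\varphi(\tau,z,w(\cdot))$ because $\kappa_\tau(\cdot)=w(\cdot)$, and that $g$ is continuous since $\varphi\in\Phi$. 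The crux is the following claim: a pair $(p_0,p)$ lies in $D^-\varphi(\tau,z,w(\cdot))$ if and only if there exist $\delta>0$ and $\psi\in\mathrm{C}^1(\mathbb R\times\mathbb R^n,\mathbb R)$ with $\partial\psi(\tau,z)/\partial\tau=p_0$ and $\nabla_z\psi(\tau,z)=p$ for which $g-\psi$ attains its minimum over $O^+_\delta(\tau,z)$ at $(\tau,z)$; dually, $(q_0,q)\in D^+\varphi(\tau,z,w(\cdot))$ if and only if there are such $\delta,\psi$ for which $g-\psi$ attains its maximum over $O^+_\delta(\tau,z)$ at $(\tau,z)$.

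Granting the claim, \cref{prop:subdifferential_inequalities} follows by unwinding definitions. If $\varphi$ is a viscosity solution and $(p_0,p)\in D^-\varphi(\tau,z,w(\cdot))$, the claim produces $\psi$ and $\delta$ realizing the hypothesis of \cref{sub_viscosity_solution}, so $p_0+H(\tau,z,w(\cdot),p)=\partial\psi(\tau,z)/\partial\tau+H(\tau,z,w(\cdot),\nabla_z\psi(\tau,z))\le0$, which is \cref{subdifferential_viscosity_solution}; the inequality \cref{superdifferential_viscosity_solution} follows in the same way from \cref{super_viscosity_solution}, while terminal condition \cref{terminal_condition} is common to both formulations. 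Conversely, if $\varphi$ satisfies \cref{subdifferential_viscosity_solution,superdifferential_viscosity_solution} and $\psi\in\mathrm{C}^1(\mathbb R\times\mathbb R^n,\mathbb R)$, $\delta>0$ realize the hypothesis of \cref{sub_viscosity_solution}, then by the claim $\big(\partial\psi(\tau,z)/\partial\tau,\nabla_z\psi(\tau,z)\big)\in D^-\varphi(\tau,z,w(\cdot))$, so \cref{subdifferential_viscosity_solution} delivers the conclusion of \cref{sub_viscosity_solution}; a test function realizing the hypothesis of \cref{super_viscosity_solution} similarly yields, via the claim and \cref{superdifferential_viscosity_solution}, the conclusion of \cref{super_viscosity_solution}.

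It remains to prove the claim; it suffices to treat $D^-$, the $D^+$ statement following by replacing $\varphi$ with $-\varphi$. The "if" direction is routine: substituting the first-order Taylor expansion of $\psi$ at $(\tau,z)$ into $g(t,x)-\psi(t,x)\ge g(\tau,z)-\psi(\tau,z)$, dividing by $|t-\tau|+\|x-z\|$ and letting $\delta\to0$, reproduces inequality \cref{subdifferential} with $(p_0,p)=(\partial\psi(\tau,z)/\partial\tau,\nabla_z\psi(\tau,z))$. The "only if" direction is the main obstacle and relies on the standard smoothing construction: with $\rho(t,x):=g(t,x)-g(\tau,z)-(t-\tau)p_0-\langle x-z,p\rangle$ and $G(\delta):=\inf\{\rho(t,x)/(|t-\tau|+\|x-z\|):(t,x)\in O^+_\delta(\tau,z),\,(t,x)\ne(\tau,z)\}$, membership in $D^-\varphi(\tau,z,w(\cdot))$ means $\liminf_{\delta\to0}G(\delta)\ge0$, from which one extracts a nondecreasing modulus $\mu$ with $\mu(0^+)=0$ such that $\rho(t,x)\ge-\mu(r)\,r$, $r:=|t-\tau|+\|x-z\|$, on a forward neighbourhood of $(\tau,z)$; a double-averaging argument then yields $\chi\in\mathrm{C}^1(\mathbb R\times\mathbb R^n,\mathbb R)$ with $\chi\ge0$, $\chi(\tau,z)=0$, $\nabla\chi(\tau,z)=0$ that dominates $\mu(r)\,r$ near $(\tau,z)$, built as a $\mathrm{C}^1$ function of the smooth radial variable $\sqrt{(t-\tau)^2+\|x-z\|^2}$ in order to avoid the nonsmoothness of $x\mapsto\|x-z\|$ at $x=z$ while keeping both value and gradient zero at $(\tau,z)$; then $\psi(t,x):=g(\tau,z)+(t-\tau)p_0+\langle x-z,p\rangle-\chi(t,x)$ has all the required properties. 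The delicate point throughout is exactly this $\mathrm{C}^1$-regularization under the one-sided constraint $t\ge\tau$; the rest is Taylor expansion and bookkeeping.
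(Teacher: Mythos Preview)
Your proposal is correct and follows essentially the same route as the paper: the theorem is reduced to the equivalence, at each fixed $(\tau,z,w(\cdot))$, between the test-function conditions \cref{sub_viscosity_solution,super_viscosity_solution} and the sub-/superdifferential inequalities \cref{subdifferential_viscosity_solution,superdifferential_viscosity_solution}, with the easy direction coming from Taylor expansion and the hard direction from constructing a $\mathrm{C}^1$ function touching from below with prescribed first-order data. The only cosmetic difference is that the paper handles the one-sided constraint $t\ge\tau$ by extending $\overline\varphi$ to $t<\tau$ via reflection and then invoking \cite[Lemma~I.4]{Crandall_Lions_1983} as a black box, whereas you sketch the radial smoothing construction directly; both accomplish the same thing.
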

The theorem follows from \cref{lem:sub_viscosity_solution_and_subdifferential_viscosity_solution,lem:super_viscosity_solution_and_superdifferential_viscosity_solution}, which establish
the equivalence of property \cref{sub_viscosity_solution} and inequality \cref{subdifferential_viscosity_solution}, and property \cref{super_viscosity_solution} and inequality \cref{superdifferential_viscosity_solution}.


At the end of the section, let us consider the question of calculation of sub- and superdifferentials. Below we give two ways for it. The first way can be used at points of ci-differentiability of a functional. The second way, based on the notion of directional derivatives, can be used in a general case.

\begin{proposition}\label{prop:subdifferentials_and_ci-differentials}
Let $\varphi \in \Phi$ and $(\tau,z,w(\cdot)) \in \mathbb G$, $\tau < \vartheta$. If $\varphi$ is ci-differentiable at $(\tau,z,w(\cdot))$ then the relations below hold:
\begin{displaymath}
\begin{array}{rcl}
D^-\varphi(t,z,w(\cdot)) \!\!\!\!&=&\!\!\!\! \big\{(p_0,p) \in \mathbb R \times \mathbb R^n \colon p_0 \leq \partial^{ci}_{\tau,w}\varphi(t,z,w(\cdot)),\, p = \nabla_z\varphi(t,z,w(\cdot))\big\},\\[0.2cm]
D^+\varphi(t,z,w(\cdot)) \!\!\!\!&=&\!\!\!\! \big\{(q_0,q) \in \mathbb R \times \mathbb R^n \colon q_0 \geq \partial^{ci}_{\tau,w}\varphi(t,z,w(\cdot)),\, q = \nabla_z\varphi(t,z,w(\cdot))\big\}.
\end{array}
\end{displaymath}
\end{proposition}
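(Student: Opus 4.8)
\emph{Proof strategy.} My plan is to substitute the ci-differentiable expansion into the definitions \cref{subdifferential} and \cref{superdifferential} of the sub- and superdifferential and reduce everything to an elementary inequality for affine functions on the half-space $[0,\infty)\times\mathbb R^n$. I would carry out the argument for $D^-\varphi$ in detail; the computation for $D^+\varphi$ is its mirror image, with infima replaced by suprema and all inequalities reversed.

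First I would set $a := \partial^{ci}_{\tau,w}\varphi(\tau,z,w(\cdot))$, $b := \nabla_z\varphi(\tau,z,w(\cdot))$, let $\kappa(\cdot)$ denote the unique element of the singleton $\Lambda_0(\tau,z,w(\cdot))$, and invoke \cref{rem:ci-differentiable} to write, for all $t\in[\tau,\vartheta]$ and $x\in\mathbb R^n$,
\begin{displaymath}
\varphi(t,x,\kappa_t(\cdot)) - \varphi(\tau,z,w(\cdot)) = (t-\tau)\,a + \langle x-z,\,b\rangle + o\big(|t-\tau| + \|x - z\|\big),
\end{displaymath}
with a single remainder $o(\cdot)$ depending only on $\{\tau,z,w(\cdot)\}$.

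Next, for $(p_0,p)\in\mathbb R\times\mathbb R^n$ and $(t,x)\in O^+_\delta(\tau,z)$ with $(t,x)\neq(\tau,z)$, the quotient in \cref{subdifferential} splits as
\begin{displaymath}
\frac{(t-\tau)(a-p_0) + \langle x-z,\,b-p\rangle}{|t-\tau| + \|x-z\|} \;+\; \frac{o\big(|t-\tau| + \|x-z\|\big)}{|t-\tau| + \|x-z\|}.
\end{displaymath}
Since $|t-\tau|+\|x-z\|\le 2\delta$ on $O^+_\delta(\tau,z)$, the second term tends to $0$ uniformly in $(t,x)$ as $\delta\to+0$; and the first term is positively homogeneous of degree $0$ in $(t-\tau,x-z)$, so its infimum over $O^+_\delta(\tau,z)\setminus\{(\tau,z)\}$ does not depend on $\delta>0$. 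Passing to the limit, I get that $(p_0,p)\in D^-\varphi(\tau,z,w(\cdot))$ is equivalent to
\begin{displaymath}
s\,(a-p_0) + \langle y,\,b-p\rangle \ge 0 \qquad\text{for all } s\ge 0,\ y\in\mathbb R^n.
\end{displaymath}

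Finally I would unravel this inequality: choosing $s=0$ forces $\langle y,b-p\rangle\ge 0$ for every $y\in\mathbb R^n$, hence $p = b = \nabla_z\varphi(\tau,z,w(\cdot))$; then the residual requirement $s(a-p_0)\ge 0$ for all $s\ge 0$ is exactly $p_0\le a = \partial^{ci}_{\tau,w}\varphi(\tau,z,w(\cdot))$, and the converse inclusion is immediate from the displayed inequality. The same scheme with suprema and reversed inequalities yields the description of $D^+\varphi$. The only step requiring genuine care — and, I expect, the sole obstacle — is the uniform vanishing of the remainder term over $O^+_\delta(\tau,z)$; this is precisely where the simplified form of ci-differentiability in \cref{rem:ci-differentiable} matters, since there $o(\cdot)$ is attached to the fixed base point $\{\tau,z,w(\cdot)\}$ and $\Lambda_0(\tau,z,w(\cdot))$ is a singleton, so that $\varphi(t,x,\kappa_t(\cdot))$ is unambiguous.
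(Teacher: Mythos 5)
Your proof is correct and follows exactly the route the paper intends: the paper states only that the proposition ``directly follows from \cref{rem:ci-differentiable} and the definitions of sub- and superdifferentials,'' and your computation---substituting the simplified ci-differentiability expansion, noting the degree-zero homogeneity of the affine part and the uniform vanishing of the remainder over $O^+_\delta(\tau,z)$, and then solving the resulting linear inequality on the half-space---is precisely the omitted verification. No gaps.
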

The proposition directly follows from \cref{rem:ci-differentiable} and definitions of sub- and superdifferentials (see \cref{subdifferential,superdifferential}).

Denote $\mathbb R^+ = [0,+\infty)$. Similarly to \cite{Lukoyanov_2006}, lower and upper right directional derivatives of a functional $\varphi \colon \mathbb G \mapsto \mathbb R$ along a direction $(l_0,l) \in \mathbb R^+ \times \mathbb R^n$ at a point $(\tau,z,w(\cdot)) \in \mathbb G$, $\tau < \vartheta$ are defined by
\begin{subequations}\label{direct_deriv}
\begin{align}
\partial^-_{(l_0,l)} \varphi(\tau,z,w(\cdot)) &
= \liminf\limits_{
\delta \to + 0\atop
(g_0,g) \to (l_0,l)} \displaystyle\frac{\varphi(g_0 \delta + \tau, g \delta + z,\kappa_{g_0 \delta + \tau}(\cdot)) - \varphi(\tau,z,w(\cdot))}{\delta},\label{direct_deriv:lower} \\[0.0cm]
\partial^+_{(l_0,l)} \varphi(\tau,z,w(\cdot)) &
= \limsup\limits_{\delta \to + 0 \atop (g_0,g) \to (l_0,l)} \displaystyle\frac{\varphi(g_0 \delta + \tau, g \delta + z,\kappa_{g_0 \delta + \tau}(\cdot)) - \varphi(\tau,z,w(\cdot))}{\delta},\label{direct_deriv:upper}
\end{align}
\end{subequations}
where $\kappa(\cdot) \in \Lambda_0(\tau,z,w(\cdot))$ and $(g_0, g) \in \mathbb R^+ \times \mathbb R^n$.

\begin{proposition}\label{prop:subdifferentials_and_directional_derivatives}
Let $\varphi \in \Phi$ and $(\tau,z,w(\cdot)) \in \mathbb G$, $\tau < \vartheta$. Then the relations below hold:
\begin{displaymath}
\begin{array}{l}
D^-\varphi(\tau,z,w(\cdot)) = \big\{(p_0,p) \in \mathbb R \times \mathbb R^n\colon \\[0.2cm]
\hspace{3.4cm}
l_0 p_0 + \langle l,p \rangle \leq \partial^-_{(l_0,l)} \varphi(\tau,z,w(\cdot)),\ (l_0,l) \in \mathbb R^+\! \times \mathbb R^n\big\},\\[0.2cm]
D^+\varphi(\tau,z,w(\cdot)) = \big\{(q_0,q) \in \mathbb R \times \mathbb R^n\colon \\[0.2cm]
\hspace{3.4cm}
l_0 q_0 + \langle l,q \rangle \geq \partial^+_{(l_0,l)} \varphi(\tau,z,w(\cdot)),\ (l_0,l) \in \mathbb R^+\! \times \mathbb R^n\big\}.
\end{array}
\end{displaymath}
\end{proposition}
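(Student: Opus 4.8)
The plan is to prove the two set identities by relating, through a common parametrization of points near $(\tau,z)$, the ``little-$o$'' formulation of the sub/superdifferential in \cref{subdifferential,superdifferential} with the one-direction $\liminf$/$\limsup$ in \cref{direct_deriv}. First I would reduce the superdifferential identity to the subdifferential one: the class $\Phi$ is stable under $\varphi\mapsto-\varphi$, and directly from the definitions one has $D^+\varphi(\tau,z,w(\cdot))=-D^-(-\varphi)(\tau,z,w(\cdot))$ together with $\partial^+_{(l_0,l)}\varphi(\tau,z,w(\cdot))=-\partial^-_{(l_0,l)}(-\varphi)(\tau,z,w(\cdot))$, so applying the first identity to $-\varphi$ yields the second. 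It then suffices to show $D^-\varphi(\tau,z,w(\cdot))=\widetilde D$, where $\widetilde D$ denotes the set on the right-hand side of the first identity. I would also recall that $\Lambda_0(\tau,z,w(\cdot))$ is a singleton, so the function $\kappa(\cdot)$ appearing in \cref{subdifferential,superdifferential} and in \cref{direct_deriv} is one and the same, with $\kappa_\tau(\cdot)=w(\cdot)$; this is what makes the two families of difference quotients literally coincide.

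For the inclusion $D^-\varphi(\tau,z,w(\cdot))\subseteq\widetilde D$, I would take $(p_0,p)\in D^-\varphi(\tau,z,w(\cdot))$ and rewrite \cref{subdifferential} in $\varepsilon$-form: for each $\varepsilon>0$ there is $\delta_0>0$ with $\varphi(t,x,\kappa_t(\cdot))-\varphi(\tau,z,w(\cdot))\geq (t-\tau)p_0+\langle x-z,p\rangle-\varepsilon(|t-\tau|+\|x-z\|)$ on $O^+_{\delta_0}(\tau,z)$. Then fix a direction $(l_0,l)\in\mathbb R^+\times\mathbb R^n$ and an arbitrary approximating sequence $\delta_k\to+0$, $(g^k_0,g^k)\to(l_0,l)$ with $g^k_0\geq 0$, and set $t_k=\tau+g^k_0\delta_k\geq\tau$, $x_k=z+g^k\delta_k$, so that $(t_k,x_k)\to(\tau,z)$ and $|t_k-\tau|+\|x_k-z\|=(g^k_0+\|g^k\|)\delta_k$. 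Dividing the $\varepsilon$-inequality at $(t_k,x_k)$ by $\delta_k$ produces the lower bound $g^k_0p_0+\langle g^k,p\rangle-\varepsilon(g^k_0+\|g^k\|)$ for the difference quotient in \cref{direct_deriv:lower}; passing to $\liminf_k$ and then letting $\varepsilon\to+0$ gives $l_0p_0+\langle l,p\rangle\leq\partial^-_{(l_0,l)}\varphi(\tau,z,w(\cdot))$, hence $(p_0,p)\in\widetilde D$. The degenerate subcase $(t_k,x_k)=(\tau,z)$ can occur only when $(l_0,l)=(0,0)$, where the quotient is $0$ and the bound is $0$, so it is harmless.

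For the reverse inclusion $\widetilde D\subseteq D^-\varphi(\tau,z,w(\cdot))$, I would argue by contradiction. If $(p_0,p)\in\widetilde D$ failed \cref{subdifferential}, there would be $\varepsilon>0$ and points $(t_k,x_k)\to(\tau,z)$ with $t_k\geq\tau$, $(t_k,x_k)\neq(\tau,z)$, and $\varphi(t_k,x_k,\kappa_{t_k}(\cdot))-\varphi(\tau,z,w(\cdot))<(t_k-\tau)p_0+\langle x_k-z,p\rangle-\varepsilon(|t_k-\tau|+\|x_k-z\|)$. Normalizing by $\delta_k:=|t_k-\tau|+\|x_k-z\|\to+0$, the pairs $g^k_0:=(t_k-\tau)/\delta_k\geq 0$ and $g^k:=(x_k-z)/\delta_k$ satisfy $g^k_0+\|g^k\|=1$, hence lie in the compact set $\{(g_0,g)\in\mathbb R^+\times\mathbb R^n:g_0+\|g\|=1\}$; passing to a subsequence, $(g^k_0,g^k)\to(l_0,l)\in\mathbb R^+\times\mathbb R^n$. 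Since $t_k=\tau+g^k_0\delta_k$ and $x_k=z+g^k\delta_k$, dividing the strict inequality by $\delta_k$ and taking $\liminf$ along the subsequence yields $\partial^-_{(l_0,l)}\varphi(\tau,z,w(\cdot))\leq l_0p_0+\langle l,p\rangle-\varepsilon<l_0p_0+\langle l,p\rangle$, contradicting $(p_0,p)\in\widetilde D$. This establishes both inclusions and, after the reduction, the proposition.

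The only genuine obstacle I anticipate is the clean bookkeeping of the two different normalizations — the denominator $|t-\tau|+\|x-z\|$ in the sub/superdifferential versus the scaling parameter $\delta$ in the directional derivatives — which is what produces the correction factor $g^k_0+\|g^k\|$ throughout, and which must be shown to be inessential: it equals $1$ under the sphere normalization of the contradiction argument and tends to $0$ in the degenerate direction $(l_0,l)=(0,0)$, and in both cases letting $\varepsilon\to+0$ closes the estimate. Everything else is routine — compactness of the $\ell^1$-unit sphere in $\mathbb R^+\times\mathbb R^n$, keeping $(t_k,x_k)$ inside $O^+_{\delta_0}(\tau,z)$ and inside $[\tau,\vartheta]\times\mathbb R^n$ for large $k$, and the identity $\kappa_\tau(\cdot)=w(\cdot)$.
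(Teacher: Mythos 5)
Your argument is correct and is essentially the proof the paper has in mind: the paper only cites \cite[Chapter III, Lemma 2.37]{Bardi_Capuzzo-Dolcetta_1997}, whose standard argument is exactly your two-sided comparison of the normalized difference quotients (the $\varepsilon$-form of \cref{subdifferential} evaluated along $t=\tau+g_0\delta$, $x=z+g\delta$, and the compactness/contradiction step on the sphere $g_0+\|g\|=1$), transplanted to the constant right extension $\kappa(\cdot)\in\Lambda_0(\tau,z,w(\cdot))$. The reduction of the superdifferential identity via $\varphi\mapsto-\varphi$ and your handling of the degenerate direction $(l_0,l)=(0,0)$ are both sound.
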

The proposition can be proved similarly to \cite[Chapter III, Lemma 2.37]{Bardi_Capuzzo-Dolcetta_1997}.

In the next section, we consider example which demonstrates how we can apply results of this section to a given functional in order to prove that it is the value functional of the game. The idea of this example is taken from paper \cite{Lukoyanov_2006}.


\section{Example} Let $\mathrm{PC} = \mathrm{PC}([-1,0),\mathbb R^2)$, $\mathbb G = [0,3] \times \mathbb R^2 \times \mathrm{PC}$. For each $(\tau,z,w(\cdot)) \in \mathbb G$, we consider the differential game for the dynamical system described by the delay differential equations
\begin{equation}\label{ex:dynamical_system}
\left\{
\begin{array}{ll}
\dot{x}_1(t) = x_2(t-1) + u(t),\\
\dot{x}_2(t) = v(t),
\end{array} \right.
\quad t \in [0,3],
\end{equation}
\begin{displaymath}
x(t) = (x_1(t),x_2(t)) \in \mathbb R^2,\quad |u(t)| \leq 1,\quad |v(t)| \leq 2,
\end{displaymath}
with initial condition \cref{initial_condition}, and the quality index
\begin{equation}\label{ex:quality_index}
\gamma = |x_1(3)| - \int_\tau^3 \sqrt{1 - u^2(t)} \mathrm{d} t.
\end{equation}

Let us define the functionals
\begin{equation}\label{ex:phis_phi}
\begin{array}{rcl}
\varphi_0(\tau,z,w(\cdot)) &=& z_1 + \chi(\tau) z_2 + \displaystyle\int\limits_{-1}^{2 - \tau - \chi(\tau)} w_2(\xi) \mathrm{d} \xi,\\[0.4cm]
\varphi_*(\tau,z,w(\cdot),\theta) &=& \big|\varphi_0(\tau,z,w(\cdot))\big| \theta + \eta(\tau, \theta),\quad \theta \in [0,1],\\[0.4cm]
\varphi(\tau,z,w(\cdot)) &=& \max\limits_{\theta \in [0,1]} \varphi_*(\tau,z,w(\cdot),\theta),\quad (\tau,z,w(\cdot)) \in \mathbb G,
\end{array}
\end{equation}
where $z = (z_1,z_2)$, $w(\xi) = (w_1(\xi),w_2(\xi))$, $\xi \in [-1,0)$, $\chi(\tau) = \max\{2 - \tau,0\}$ and
$\eta(\tau, \theta) = \chi^2(\tau) \theta - (3 - \tau)\sqrt{1 + \theta^2}$. The value $\varphi(\tau,z,w(\cdot))$ is the programmed maximin \cite[Section 5.1]{Krasovskii_Subbotin_1988} of differential game \cref{ex:dynamical_system,initial_condition,ex:quality_index}, i.e.
\begin{displaymath}
\varphi(\tau,z,w(\cdot)) = \sup\limits_{v(\cdot) \in \mathcal{V}_\tau}\inf\limits_{u(\cdot) \in \mathcal{U}_\tau} \gamma,
\end{displaymath}
where $\gamma$ is defined by \cref{ex:quality_index} under the motion $x(\cdot) = x(\cdot\,|\,\tau,z,w(\cdot),u(\cdot),v(\cdot))$ of system \cref{ex:dynamical_system,initial_condition}. In the general case, the programmed maximin can be less than the upper value of the game. Below, on the basis of results from the \cref{section:main_results}, we show that $\varphi$ is the value functional of differential game \cref{ex:dynamical_system,initial_condition,ex:quality_index}.

Note  that, for every $(\tau,z,w(\cdot)), (\tau',z',w'(\cdot)) \in \mathbb G$, if $\theta' \in [0,1]$ satisfies the equality $\varphi(\tau',z',w'(\cdot)) = \varphi_*(\tau',z',w'(\cdot),\theta')$ then
\begin{equation}\label{ex:phi_phis}
\varphi(\tau',z',w'(\cdot)) - \varphi(\tau,z,w(\cdot)) \leq \varphi_*(\tau',z',w'(\cdot),\theta') - \varphi_*(\tau,z,w(\cdot),\theta').
\end{equation}
Using this estimate together with \cref{ex:phis_phi}, one can establish the inclusion $\varphi \in \Phi$.

In our case of quality index \cref{ex:quality_index}, terminal condition \cref{terminal_condition} becomes $\sigma(z,w(\cdot)) = |z_1|$, $(z,w(\cdot)) \in \mathbb R^2 \times \mathrm{PC}$. It is easy to check that the functional $\varphi$ satisfies such condition. Thus, according to \cref{theorem,prop:subdifferential_inequalities}, in order to establish that $\varphi$ is the value functional, it is sufficient to get inequalities \cref{subdifferential_viscosity_solution,superdifferential_viscosity_solution}, in which, due to \cref{Hamiltonian}, we have
\begin{equation}\label{ex:Hamiltonian}
H(\tau,z,w(\cdot),s) = w_2(-1) s_1 - \sqrt{1 + s_1^2} + 2 |s_2|,\quad (\tau,z,w(\cdot)) \in \mathbb G,\quad s \in \mathbb R^2.
\end{equation}

Denote by $\mathbb G_+$, $\mathbb G_-$ and $\mathbb G_0$ the sets of points $(\tau,z,w(\cdot)) \in \mathbb G$, $\tau < \vartheta$ such that $\varphi_0(\tau,z,w(\cdot)) > 0$, $\varphi_0(\tau,z,w(\cdot)) < 0$ and $\varphi_0(\tau,z,w(\cdot)) = 0$, respectively.

Let us consider the case when $(\tau,z,w(\cdot)) \in \mathbb G_+ \cup \mathbb G_-$. Define the function $\tilde{\varphi}$ by \cref{tilde_phi} and the function $\tilde{\varphi}_*$ by
\begin{displaymath}
\tilde{\varphi}_*(t,x,\theta) = \varphi_*(t,x,\kappa_t(\cdot),\theta),\quad
t \in \mathbb [\tau,\vartheta],\quad x \in \mathbb R^n,\quad
\theta \in [0,1],
\end{displaymath}
where $\kappa(\cdot) \in \Lambda_0(\tau,z,w(\cdot))$. Then, taking into account \cref{ex:phis_phi}, one can show that these functions are continuous and, for each $t \in [\tau,\vartheta]$ and $x \in \mathbb R^n$, there exists a unique value $\theta(t,x) \in [0,1]$ such that $\tilde{\varphi}(t,x) = \tilde{\varphi}_*(t,x,\theta(t,x))$. It means that the function $(t,x) \mapsto \theta(t,x)$ is continuous. Denote $\theta_\circ = \theta(\tau,z)$. Then, using \cref{ex:phis_phi,ex:phi_phis,rem:ci-differentiable}, one can establish that the functional $\varphi$ is ci-differentiable at $(\tau,z,w(\cdot))$ and
\begin{displaymath}
\begin{array}{rcl}
\partial^{ci}_{\tau,w}\varphi(\tau,z,w(\cdot)) \!\!\!\!&=&\!\!\!\! \mp w_2(-1) \theta_\circ - 2 \chi(\tau) \theta_\circ + \sqrt{1 + \theta_\circ^2},\\[0.2cm]
\nabla_z\varphi(\tau,z,w(\cdot)) \!\!\!\!&=&\!\!\!\! (\pm \theta_\circ,\pm \chi(\tau) \theta_\circ),
\end{array}
\quad (\tau,z,w(\cdot)) \in \mathbb G_{\pm}.
\end{displaymath}
Hence, taking into account \cref{prop:subdifferentials_and_ci-differentials,ex:Hamiltonian}, we obtain that inequalities \cref{subdifferential_viscosity_solution,superdifferential_viscosity_solution} holds.

Let us consider the case when $(\tau,z,w(\cdot)) \in \mathbb G_0$. Then $\varphi$ is not ci-differentiable at $(\tau,z,w(\cdot))$, however, similarly to the previous reasoning, we can find the directional derivatives of $\varphi$ along $(l_0,l=(l_1,l_2)) \in \mathbb R^+\! \times \mathbb R^2$ (see \cref{direct_deriv:lower,direct_deriv:upper}):
\begin{equation}\label{ex:direct_deriv_phi}
\begin{array}{c}
\partial^-_{(l_0,l)} \varphi(\tau,z,w(\cdot)) = \partial^+_{(l_0,l)} \varphi(\tau,z,w(\cdot))\\[0.2cm]
\hspace{1.0cm}= |- w(-1) l_0 + l_1 + \chi(\tau) l_2| \theta_\circ + (- 2 \chi(\tau) \theta_\circ + \sqrt{1 + \theta_\circ^2}) l_0,
\end{array}
\end{equation}
Let $(p_0,p) \in D^- \varphi(\tau,z,w(\cdot))$, $p = (p_1,p_2) \in \mathbb R^2$. By \cref{prop:subdifferentials_and_directional_derivatives}, it means that $p_0 l_0 + p_1 l_1 + p_2 l_2 \leq \partial^-_{(l_0,l)} \varphi(\tau,z,w(\cdot))$ for any $l_0 \in \mathbb R$, $l = (l_1,l_2) \in \mathbb R^2$. Applying this inequality for $(l_0,l) \in \{(1,(w(-1),0)),(0,(\pm 1,0)),(0,(\pm\chi(\tau),\mp 1))\}$, and according to \cref{ex:direct_deriv_phi}, we get
\begin{equation}\label{ex:p0_p1_p2}
p_0 \leq - w_2(-1) p_1 - 2 \chi(\tau) \theta_\circ + \sqrt{1 + \theta_\circ^2},\quad |p_1| \leq \theta_\circ,\quad p_2 = \chi(\tau) p_1.
\end{equation}
Define the function $\nu(\theta) = - 2 \chi(\tau) \theta + \sqrt{1 + \theta^2}$, $\theta \in [0,1]$. Then from \cref{ex:Hamiltonian,ex:p0_p1_p2} we derive
\begin{equation}\label{ex:HJ_estimate}
p_0 + H(\tau,z,w(\cdot),p) \leq \nu(\theta_\circ) - \nu(|p_1|).
\end{equation}
Since $(\tau,z,w(\cdot)) \in \mathbb G_0$, according to \cref{ex:phis_phi}, we have $\theta_\circ = \mathrm{argmax}\{\eta(\tau,\theta)\,|\,\theta\in [0,1]\}$. Using this, one can show that the function $\nu$ is monotone decreasing on $[0,\theta_\circ]$. Then, from \cref{ex:HJ_estimate} and the second relation in \cref{ex:p0_p1_p2}, we obtain inequality \cref{subdifferential_viscosity_solution}. In the similar way, one can prove inequality \cref{superdifferential_viscosity_solution}.

Thus, $\varphi$ is the value functional of differential game \cref{ex:dynamical_system,ex:quality_index,initial_condition}.


\section{Auxiliary statements} Let us give the following auxiliary statements. They can be proved similarly to Propositions 3.1--3.3 in \cite{Plaksin_2019}.

\begin{lemma}\label{lem:alpha_x-lambda_x}
For every $\alpha > 0$, there exist $\alpha_X = \alpha_X(\alpha) > 0$ and $\lambda_X = \lambda_X(\alpha) > 0$ such that, for every $\tau \in [t_0,\vartheta]$, $(z,w(\cdot)) \in P(\alpha)$, $u(\cdot) \in \mathcal{U}_\tau$, and $v(\cdot) \in \mathcal{V}_\tau$, the motion $x(\cdot) = x(\cdot\,|\,\tau,z,w(\cdot),u(\cdot),v(\cdot))$ of system \cref{dynamical_system} satisfies the relations
\begin{displaymath}
(x(t),x_t(\cdot)) \in P(\alpha_X),\quad \|x(t) - x(t')\| \leq \lambda_X|t - t'|,\quad t,t' \in [\tau,\vartheta].
\end{displaymath}
\end{lemma}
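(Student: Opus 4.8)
The plan is to derive a uniform a priori bound on the motion by a Gronwall argument and then read the Lipschitz constant off from \cref{cond:f_sublinear_growth}. Fix $\alpha > 0$, $\tau \in [t_0,\vartheta]$, $(z,w(\cdot)) \in P(\alpha)$, $u(\cdot) \in \mathcal U_\tau$, $v(\cdot) \in \mathcal V_\tau$, and write $x(\cdot) = x(\cdot\,|\,\tau,z,w(\cdot),u(\cdot),v(\cdot)) \in \Lambda(\tau,z,w(\cdot))$. Since $x(\cdot)$ is Lipschitz (hence absolutely continuous) on $[\tau,\vartheta]$ and satisfies \cref{dynamical_system} almost everywhere, it admits the integral representation $x(t) = z + \int_\tau^t f(s,x(s),x_s(\cdot),u(s),v(s))\,\mathrm{d} s$ for $t \in [\tau,\vartheta]$, while $x(t) = w(t-\tau)$ on $[\tau-h,\tau)$.

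First I would introduce the running supremum $\phi(t) = \sup_{\theta \in [\tau,t]}\|x(\theta)\|$, $t \in [\tau,\vartheta]$, which is finite and continuous, and estimate the arguments appearing on the right-hand side of \cref{cond:f_sublinear_growth} in terms of $\phi$. For $s \in [\tau,\vartheta]$ one has $\|x(s)\| \le \phi(s)$; the value $x_s(-h) = x(s-h)$ equals $w(s-h-\tau)$, bounded by $\|w(\cdot)\|_\infty \le \alpha$, when $s-h < \tau$, and lies in $\{x(\theta):\theta\in[\tau,s]\}$ otherwise, so $\|x_s(-h)\| \le \alpha + \phi(s)$; and, splitting $\|x_s(\cdot)\|_1 = \int_{s-h}^{s}\|x(\theta)\|\,\mathrm{d}\theta$ at $\theta = \tau$ and using the same two bounds on the two pieces, $\|x_s(\cdot)\|_1 \le h(\alpha+\phi(s))$. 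Substituting into \cref{cond:f_sublinear_growth} gives $\|f(s,x(s),x_s(\cdot),u(s),v(s))\| \le c_f\big(1 + (1+h)\alpha + (2+h)\phi(s)\big)$ uniformly in $u(\cdot),v(\cdot)$ (the same estimate holds for $|f^0|$).

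Next I would plug this into the integral representation: for $t\in[\tau,\vartheta]$, $\|x(t)\| \le \alpha + c_f\big(1+(1+h)\alpha\big)(\vartheta-t_0) + c_f(2+h)\int_\tau^{t}\phi(s)\,\mathrm{d} s$; since the right-hand side is nondecreasing in $t$ it also bounds $\phi(t)$, and Gronwall's inequality yields $\phi(t) \le \big(\alpha + c_f(1+(1+h)\alpha)(\vartheta-t_0)\big)\exp\!\big(c_f(2+h)(\vartheta-t_0)\big) =: C(\alpha)$ for all $t \in [\tau,\vartheta]$, uniformly in all the data. Setting $\alpha_X = \max\{\alpha,C(\alpha)\}$ and observing that for $t\in[\tau,\vartheta]$ the segment $x_t(\cdot)$ takes values among $w(\cdot)$ (bounded by $\alpha$) and $\{x(\theta):\theta\in[\tau,t]\}$ (bounded by $\phi(t)\le C(\alpha)$), we obtain $\|x_t(\cdot)\|_\infty \le \alpha_X$, i.e. $(x(t),x_t(\cdot))\in P(\alpha_X)$. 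Finally, since now $\|x(s)\|\le\alpha_X$, $\|x_s(\cdot)\|_1 \le h\alpha_X$, and $\|x_s(-h)\|\le\alpha_X$ for all $s \in [\tau,\vartheta]$, \cref{cond:f_sublinear_growth} gives $\|f(s,x(s),x_s(\cdot),u(s),v(s))\| \le c_f\big(1+(2+h)\alpha_X\big) =: \lambda_X$, so that $\|x(t)-x(t')\| = \big\|\int_{t'}^{t} f(s,\ldots)\,\mathrm{d} s\big\| \le \lambda_X|t-t'|$ for $t,t'\in[\tau,\vartheta]$.

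The only real subtlety — hence the step to handle with care — is the pointwise term $\|r(-h)\|$ in \cref{cond:f_sublinear_growth}, which in general is not dominated by the integral norm $\|\cdot\|_1$; here it is controlled because the initial function $w(\cdot)$ is bounded in the sup-norm by $\alpha$ and the motion on $[\tau,\vartheta]$ is continuous, hence pointwise bounded by $\phi$. The remainder is routine Gronwall bookkeeping, made self-consistent by working with the running supremum $\phi$ rather than with $\|x(t)\|$ directly, so that the delayed arguments evaluated at times $s-h < s$ are already absorbed.
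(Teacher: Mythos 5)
Your proof is correct and is essentially the standard Gronwall argument that the paper itself omits, deferring instead to Propositions 3.1--3.3 of the cited reference; your handling of the pointwise term $\|r(-h)\|$ via the running supremum $\phi$ and the sup-norm bound on $w(\cdot)$ is exactly the right way to make that argument go through in this setting.
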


\begin{lemma}\label{lem:motion_lipshiz_continuous}
For every $\alpha > 0$, there exists $\lambda_* = \lambda_*(\alpha) > 0$ such that, for every $\tau \in [t_0,\vartheta]$, $(z,w(\cdot)), (z',w'(\cdot)) \in P(\alpha)$, $u(\cdot) \in \mathcal{U}_\tau$, and $v(\cdot) \in \mathcal{V}_\tau$, the motions $x(\cdot) = x(\cdot\,|\,\tau,z,w(\cdot),u(\cdot),v(\cdot))$ and $x'(\cdot) = x(\cdot\,|\,\tau,z',w'(\cdot),u(\cdot),v(\cdot))$ of system \cref{dynamical_system} satisfy the inequality
\begin{displaymath}
\begin{array}{c}
\|x(\vartheta) - x'(\vartheta)\| + \|x_\vartheta(\cdot) - x'_\vartheta(\cdot)\|_1 \\[0.1cm]
+ \bigg|\displaystyle\int\limits_\tau^\vartheta f^0(\xi,x(\xi),x_\xi(\cdot),u(\xi),v(\xi)) \mathrm{d} \xi - \int\limits_\tau^\vartheta f^0(\xi,x'(\xi),x'_\xi(\cdot),u(\xi),v(\xi)) \mathrm{d} \xi\bigg| \\[0.6cm]
\leq \lambda_* \big(\|z - z'\| + \|w(\cdot) - w'(\cdot)\|_1\big).
\end{array}
\end{displaymath}
\end{lemma}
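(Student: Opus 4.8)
The plan is to run a Gronwall argument on the pointwise difference of the two motions, after first using \cref{lem:alpha_x-lambda_x} to pass into the local Lipschitz regime. Fix $\alpha>0$ and let $\alpha_X=\alpha_X(\alpha)$ be as in \cref{lem:alpha_x-lambda_x}, so that both motions $x(\cdot)$ and $x'(\cdot)$ take values in $P(\alpha_X)$ on $[\tau,\vartheta]$; set $\lambda_f=\lambda_f(\alpha_X)$ to be the constant from \cref{cond:f_lipshiz_continuous} on this enlarged ball. Introduce $\Delta(t)=\|x(t)-x'(t)\|$ for $t\in[\tau-h,\vartheta]$, and record the boundary data: $\Delta(t)=\|w(t-\tau)-w'(t-\tau)\|$ on $[\tau-h,\tau)$, $\Delta(\tau)=\|z-z'\|$, and $\int_{\tau-h}^{\tau}\Delta(s)\,\mathrm{d}s=\|w(\cdot)-w'(\cdot)\|_1$. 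Writing \cref{dynamical_system,initial_condition} in integral form, subtracting, and applying \cref{cond:f_lipshiz_continuous} gives, for $t\in[\tau,\vartheta]$,
\[
\Delta(t)\le\|z-z'\|+\lambda_f\int_\tau^t\Big(\Delta(\xi)+\|x_\xi(\cdot)-x'_\xi(\cdot)\|_1+\Delta(\xi-h)\Big)\,\mathrm{d}\xi .
\]

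The one point that needs care is that the discrete-delay term $\Delta(\xi-h)$ is a pointwise value and cannot be fed straight into Gronwall; it must be integrated and shifted. Since $\|x_\xi(\cdot)-x'_\xi(\cdot)\|_1=\int_{\xi-h}^{\xi}\Delta(s)\,\mathrm{d}s$, Fubini gives $\int_\tau^t\|x_\xi(\cdot)-x'_\xi(\cdot)\|_1\,\mathrm{d}\xi\le h\|w(\cdot)-w'(\cdot)\|_1+h\int_\tau^t\Delta(\xi)\,\mathrm{d}\xi$, and the substitution $s=\xi-h$ gives $\int_\tau^t\Delta(\xi-h)\,\mathrm{d}\xi=\int_{\tau-h}^{t-h}\Delta(s)\,\mathrm{d}s\le\|w(\cdot)-w'(\cdot)\|_1+\int_\tau^t\Delta(\xi)\,\mathrm{d}\xi$. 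Substituting back,
\[
\Delta(t)\le\|z-z'\|+\lambda_f(1+h)\|w(\cdot)-w'(\cdot)\|_1+\lambda_f(2+h)\int_\tau^t\Delta(\xi)\,\mathrm{d}\xi ,
\]
so the integral form of Gronwall's inequality yields $\Delta(t)\le c_1\big(\|z-z'\|+\|w(\cdot)-w'(\cdot)\|_1\big)$ for all $t\in[\tau,\vartheta]$, with $c_1=\big(1+\lambda_f(1+h)\big)e^{\lambda_f(2+h)(\vartheta-t_0)}$ depending only on $\alpha$.

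It then remains to assemble the three terms of the asserted inequality. The first is immediate: $\|x(\vartheta)-x'(\vartheta)\|=\Delta(\vartheta)\le c_1(\|z-z'\|+\|w(\cdot)-w'(\cdot)\|_1)$. For the second, $\|x_\vartheta(\cdot)-x'_\vartheta(\cdot)\|_1=\int_{\vartheta-h}^{\vartheta}\Delta(s)\,\mathrm{d}s$, and splitting off the portion of $[\vartheta-h,\vartheta]$ that may lie below $\tau$ (bounded by $\|w(\cdot)-w'(\cdot)\|_1$) from the portion in $[\tau,\vartheta]$ (bounded by $h\,c_1(\cdots)$) gives the required bound. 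For the third, the $f^0$-difference is at most $\lambda_f\int_\tau^\vartheta\big(\Delta(\xi)+\|x_\xi(\cdot)-x'_\xi(\cdot)\|_1+\Delta(\xi-h)\big)\,\mathrm{d}\xi$, which by the two estimates above is $\le c_2(\|z-z'\|+\|w(\cdot)-w'(\cdot)\|_1)$ for a constant $c_2=c_2(\alpha)$. Taking $\lambda_*=\lambda_*(\alpha)$ to be the sum of these constants finishes the proof. Apart from the integrate-and-shift trick for the discrete delay, this is the standard linear-growth/Lipschitz Gronwall argument, carried out exactly as in Propositions~3.1--3.3 of \cite{Plaksin_2019}.
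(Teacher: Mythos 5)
Your proof is correct and follows essentially the same route as the paper, which simply defers this lemma to the standard Gronwall-type arguments of Propositions~3.1--3.3 in \cite{Plaksin_2019}: pass to the ball $P(\alpha_X)$ via \cref{lem:alpha_x-lambda_x}, apply \cref{cond:f_lipshiz_continuous}, handle the $\|\cdot\|_1$ and discrete-delay terms by Fubini and a shift of variable, and close with Gronwall. No gaps.
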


\begin{lemma}\label{lem:kappa}
Let $(z,w(\cdot)) \in [t_0,\vartheta] \times \mathrm{PC}$. Then, for every $\varepsilon > 0$, there exists $\delta = \delta(\varepsilon) > 0$ such that
\begin{displaymath}
\|\kappa_t(\cdot) - w(\cdot)\|_1 \leq \varepsilon,\quad t \in [\tau,\tau+\delta]\cap[t_0,\vartheta],\quad \kappa(\cdot) \in \Lambda_0(\tau,z,w(\cdot)),\quad \tau \in [t_0,\vartheta].
\end{displaymath}
\end{lemma}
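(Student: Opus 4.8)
The plan is to compute $\kappa_t(\cdot)$ explicitly and estimate $\|\kappa_t(\cdot)-w(\cdot)\|_1$ directly, splitting it into a ``boundary'' term coming from the part of $\kappa_t(\cdot)$ that has already frozen to the constant $z$, and a ``translation'' term coming from the part that is still a shifted copy of $w(\cdot)$.

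First I would fix $(z,w(\cdot))$, put $M=\|w(\cdot)\|_\infty$ (finite, since a piecewise continuous function with finite one-sided limits is bounded), and recall that $\Lambda_0(\tau,z,w(\cdot))$ is a singleton whose unique element $\kappa(\cdot)$ satisfies $\kappa(\xi)=w(\xi-\tau)$ for $\xi\in[\tau-h,\tau)$ and $\kappa(\xi)=z$ for $\xi\in[\tau,\vartheta]$. For $t\in[\tau,\tau+\delta]\cap[t_0,\vartheta]$ with $\delta\in(0,h)$, setting $s=t-\tau\in[0,h)$ gives $\kappa_t(\xi)=w(\xi+s)$ on $[-h,-s)$ and $\kappa_t(\xi)=z$ on $[-s,0)$, hence
\[
\|\kappa_t(\cdot)-w(\cdot)\|_1=\int_{-h}^{-s}\|w(\xi+s)-w(\xi)\|\,\mathrm{d}\xi+\int_{-s}^{0}\|z-w(\xi)\|\,\mathrm{d}\xi.
\]
The second integral is at most $s(\|z\|+M)$, which tends to $0$ as $s\to+0$.

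For the first integral I would invoke continuity of translation in $L^1$ in the form dictated by the piecewise structure of $w(\cdot)$: let $-h=\xi_1<\dots<\xi_k=0$ be points such that $w(\cdot)$ is continuous on each $[\xi_i,\xi_{i+1})$ with a finite left limit at $\xi_{i+1}$, so that $w(\cdot)$ restricted to $[\xi_i,\xi_{i+1})$ extends to a uniformly continuous function on $[\xi_i,\xi_{i+1}]$ with some modulus of continuity $\omega_i(\cdot)$. Splitting $[-h,-s)$ into the set of $\xi$ for which $[\xi,\xi+s]$ lies inside a single interval $[\xi_i,\xi_{i+1})$ and its complement — the latter being contained in $\bigcup_{j=2}^{k}[\xi_j-s,\xi_j)$ and hence of length at most $(k-1)s$ — and bounding the integrand by $\max_i\omega_i(s)$ on the first set and by $2M$ on the second, one obtains
\[
\int_{-h}^{-s}\|w(\xi+s)-w(\xi)\|\,\mathrm{d}\xi\le h\max_{1\le i\le k-1}\omega_i(s)+2M(k-1)s,
\]
which tends to $0$ as $s\to+0$. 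Since none of these bounds involves $\tau$, it then suffices, given $\varepsilon>0$, to choose $\delta\in(0,h)$ so small that $h\max_i\omega_i(\delta)+2M(k-1)\delta+\delta(\|z\|+M)\le\varepsilon$.

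The only delicate point — and so the ``hard part'' of this otherwise elementary lemma — is controlling the translation integral across the finitely many jumps of $w(\cdot)$, which is exactly where the crude $2M$-bound on the straddling set is used; the uniformity in $\tau$ then comes for free, because the non-frozen part of $\kappa_t(\cdot)$ is always the same translate of $w(\cdot)$, independent of $\tau$ and of the (unique) element of $\Lambda_0(\tau,z,w(\cdot))$.
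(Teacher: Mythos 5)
Your proof is correct. The paper does not actually prove \cref{lem:kappa} in-line; it only remarks that it "can be proved similarly to Propositions 3.1--3.3 in \cite{Plaksin_2019}", and your direct computation is exactly the standard argument one would expect there: the split of $\|\kappa_t(\cdot)-w(\cdot)\|_1$ into the frozen part (bounded by $s(\|z\|+M)$) and the translated part, with the translation term controlled via uniform continuity on each piece plus a crude $2M$ bound on the measure-$O(s)$ set straddling the finitely many jump points, and the uniformity in $\tau$ coming for free since the bound depends only on $s=t-\tau$.
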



\section{Properties of lower and upper value functionals}
In this section, we prove auxiliary properties of the lower value functionals $\rho^u$ and the upper value functional $\rho^v$ defined by \cref{rho_u,rho_v}, respectively.

According terminology from \cite[Section 4.2]{Krasovskii_Subbotin_1988}, let us give the following definition.

\begin{definition}\label{def:stable}
A functional $\varphi \colon \mathbb G \mapsto \mathbb R$ is called stable (u- and v-stable) with respect to differential game \cref{dynamical_system,initial_condition,quality_index} if, for any $(\tau,z,w(\cdot)) \in \mathbb G$ and $t \in [\tau,\vartheta]$, it satisfies the estimates
\begin{displaymath}
\begin{array}{c}
\sup\limits_{v(\cdot) \in \mathcal{V}_\tau} \inf\limits_{u(\cdot) \in \mathcal{U}_\tau} \bigg(\varphi(t,x(t),x_t(\cdot)) + \displaystyle\int\limits_\tau^t f^0(\xi,x(\xi),x_\xi(\cdot),u(\xi),v(\xi)) \mathrm{d} \xi \bigg) \leq \varphi(\tau,z,w(\cdot)), \\[0.2cm]
\inf\limits_{u(\cdot) \in \mathcal{U}_\tau} \sup\limits_{v(\cdot) \in \mathcal{V}_\tau} \bigg(\varphi(t,x(t),x_t(\cdot)) + \displaystyle\int\limits_\tau^t f^0(\xi,x(\xi),x_\xi(\cdot),u(\xi),v(\xi)) \mathrm{d} \xi \bigg) \geq \varphi(\tau,z,w(\cdot)),
\end{array}
\end{displaymath}
where $x(\cdot) = x(\cdot\,|\,\tau,z,w(\cdot),u(\cdot),v(\cdot))$ is the motion of systems \cref{dynamical_system}.
\end{definition}

\begin{lemma}\label{lem:rho_is_stable}
The lower value functional $\rho^u$ and the upper value functional $\rho^v$ are stable with respect to differential game \cref{dynamical_system,initial_condition,quality_index}.
\end{lemma}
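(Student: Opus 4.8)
The plan is to prove the two estimates in \cref{def:stable} separately, treating $\rho^u$ for the $u$-stability inequality (the first one) and $\rho^v$ for the $v$-stability inequality (the second one); the remaining two cases then follow by an entirely symmetric argument and, ultimately, from the equality $\rho^u = \rho^v$ that \cref{theorem} will establish (though of course we cannot use that here and must argue directly). Fix $(\tau,z,w(\cdot)) \in \mathbb G$ and $t \in [\tau,\vartheta]$, and abbreviate $\gamma^t(\cdot) = \int_\tau^t f^0(\xi,x(\xi),x_\xi(\cdot),u(\xi),v(\xi))\,\mathrm{d}\xi$ for the running cost on $[\tau,t]$. The key structural fact I would invoke is a \emph{dynamic programming / cocycle} property: a non-anticipative strategy on $[\tau,\vartheta]$ can be built by concatenating a non-anticipative strategy on $[\tau,t]$ with, for each realized history on $[\tau,t]$, a non-anticipative strategy on $[t,\vartheta]$ issued from the state $(t,x(t),x_t(\cdot))$; conversely any strategy on $[\tau,\vartheta]$ restricts to one on $[\tau,t]$ and induces ones on $[t,\vartheta]$. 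This uses only the semigroup property of the delay flow (the motion from $(\tau,z,w(\cdot))$ up to time $t$, then continued, coincides with the motion on $[\tau,\vartheta]$), the additivity of the integral cost over $[\tau,t]\cup[t,\vartheta]$, and that the set of admissible control realizations splits as $\mathcal{U}_\tau \cong \mathcal{U}_\tau^{[\tau,t]} \times \mathcal{U}_t$ (and similarly for $\mathcal V$).

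For the first estimate, take any non-anticipative strategy $Q^u_\tau$ of the first player on $[\tau,\vartheta]$ and any $v(\cdot) \in \mathcal V_\tau$; let $x(\cdot)$ be the generated motion. Splitting the cost, $\gamma(\tau,z,w(\cdot),Q^u_\tau,v(\cdot)) = \gamma^t(\cdot) + \gamma(t,x(t),x_t(\cdot),\widehat Q^u_t,\widehat v(\cdot))$, where $\widehat Q^u_t$ is the non-anticipative strategy induced on $[t,\vartheta]$ by $Q^u_\tau$ after the prefix $v|_{[\tau,t]}$, and $\widehat v(\cdot) = v|_{[t,\vartheta]}$. Bounding the tail below by $\inf_{\widehat Q} \sup_{\widehat v} \gamma(t,x(t),x_t(\cdot),\widehat Q,\widehat v(\cdot))$ is the wrong direction, so instead I would argue: for every $\varepsilon>0$ choose $Q^u_\tau$ to be $\varepsilon$-optimal in \cref{rho_u}; then on $[\tau,t]$ the second player may play anything, and on $[t,\vartheta]$ the induced strategy $\widehat Q^u_t$ can be made (by $\varepsilon$-optimality of the original, for each prefix) such that $\sup_{\widehat v}\gamma(t,x(t),x_t(\cdot),\widehat Q^u_t,\widehat v) \le \rho^u(t,x(t),x_t(\cdot)) + \varepsilon$. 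Taking $\sup$ over $v|_{[\tau,t]}$ of $\gamma^t(\cdot) + \rho^u(t,x(t),x_t(\cdot))$ and then $\inf$ over strategies on $[\tau,t]$ gives $\le \rho^u(\tau,z,w(\cdot)) + \varepsilon$, and letting $\varepsilon \to 0$ yields the first inequality in \cref{def:stable} for $\varphi = \rho^u$. The second inequality is proved the same way with the roles of the players interchanged, using $\varepsilon$-optimality in \cref{rho_v}.

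The \textbf{main obstacle} is the careful handling of the non-anticipative structure in the concatenation argument: one must verify that the strategies glued on $[t,\vartheta]$ — one for each realized control prefix of the opponent on $[\tau,t]$ — can be chosen measurably/consistently so that the concatenated object is again a legitimate non-anticipative strategy on $[\tau,\vartheta]$, and that the selection of $\varepsilon$-optimal tails does not destroy non-anticipativity. This is exactly the standard technical core of the dynamic programming principle for differential games in the Elliott–Kalton (quasi-strategy) framework; here it is complicated only by the fact that the "state" carried to time $t$ lives in the infinite-dimensional space $\mathbb G$. The regularity already available — \cref{lem:alpha_x-lambda_x} (uniform bounds and Lipschitz continuity of motions), the inclusion $\rho^u,\rho^v \in \Phi$, and \cref{cond:f_continuous,cond:f_sublinear_growth,cond:f_lipshiz_continuous} — ensures the cost functional $\gamma$ depends continuously on the data, so the $\varepsilon$-selections behave well and the suprema/infima are finite. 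Modulo this bookkeeping, the proof is the verbatim adaptation of \cite[Section 4.2]{Krasovskii_Subbotin_1988} and of the analogous statement in \cite{Plaksin_2019}, which is why the paper can reasonably assert it follows "similarly."
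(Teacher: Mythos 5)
Your proposal is correct and takes essentially the same route as the paper, whose entire proof of this lemma is the one-line observation that the dynamic programming principle for $\rho^u$ and $\rho^v$ --- proved as in \cite[Chapter VIII, Theorem 1.9]{Bardi_Capuzzo-Dolcetta_1997} by exactly the strategy-concatenation argument you sketch --- implies both estimates of \cref{def:stable}. The only quibble is a direction slip in your middle paragraph: for the first estimate you do not need the induced tail strategies to be $\varepsilon$-optimal, since the automatic inequality $\rho^u(t,x(t),x_t(\cdot)) \leq \sup_{\widehat v}\gamma(t,x(t),x_t(\cdot),\widehat{Q}^u_t,\widehat v(\cdot))$ (which holds because $\rho^u$ is an infimum over tail strategies) already suffices, but this does not affect the soundness of the approach.
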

\begin{proof}
Similarly to \cite[Chapter VIII, Theorem 1.9]{Bardi_Capuzzo-Dolcetta_1997}, one can prove dynamic programming principle for $\rho^u$ and $\rho^v$, which implies the statement of the lemma.
\end{proof}

\begin{lemma}\label{lem:rho_in_Phi}
For the lower value functional $\rho^u$ and the upper value functional $\rho^v$, the inclusions $\rho^u, \rho^v \in \Phi$ hold.
\end{lemma}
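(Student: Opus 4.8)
The plan is to verify the two defining properties of the class $\Phi$ for each of $\rho^u$ and $\rho^v$: (i) continuity with respect to $\tau$, and (ii) the local Lipschitz estimate \cref{phi_lipshiz_continuous} in the $(z,w(\cdot))$ variables, uniformly for $(z,w(\cdot))$ in each set $P(\alpha)$. I will treat $\rho^u$ in detail; the argument for $\rho^v$ is entirely symmetric, interchanging the roles of the players and of $\inf/\sup$.

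First I would establish the Lipschitz estimate \cref{phi_lipshiz_continuous}. Fix $\alpha > 0$, $\tau \in [t_0,\vartheta]$, and two initial data $(z,w(\cdot)), (z',w'(\cdot)) \in P(\alpha)$. The key observation is that a non-anticipative strategy $Q^u_\tau$ of the first player acts on control realizations $v(\cdot) \in \mathcal V_\tau$, and the set $\mathcal V_\tau$ does not depend on the initial data; hence the \emph{same} strategy $Q^u_\tau$ and the \emph{same} realization $v(\cdot)$ can be used against both initial data. For a fixed such pair, let $u(\cdot) = Q^u_\tau[v(\cdot)](\cdot)$ and let $x(\cdot), x'(\cdot)$ be the corresponding motions from $(z,w(\cdot))$ and $(z',w'(\cdot))$. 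By \cref{lem:motion_lipshiz_continuous} (with the same $\alpha$), there is $\lambda_* = \lambda_*(\alpha)$ such that
\begin{displaymath}
\|x(\vartheta) - x'(\vartheta)\| + \|x_\vartheta(\cdot) - x'_\vartheta(\cdot)\|_1 + \bigg|\int_\tau^\vartheta \!\! f^0(\xi,x(\xi),x_\xi(\cdot),u(\xi),v(\xi))\,\mathrm d\xi - \int_\tau^\vartheta \!\! f^0(\xi,x'(\xi),x'_\xi(\cdot),u(\xi),v(\xi))\,\mathrm d\xi\bigg| \leq \lambda_*\big(\|z-z'\| + \|w(\cdot)-w'(\cdot)\|_1\big).
\end{displaymath}
Next I invoke \cref{lem:alpha_x-lambda_x} to obtain $\alpha_X = \alpha_X(\alpha)$ so that $(x(\vartheta),x_\vartheta(\cdot)), (x'(\vartheta),x'_\vartheta(\cdot)) \in P(\alpha_X)$, whence \cref{cond:sigma_lipshiz_continuous} gives $|\sigma(x(\vartheta),x_\vartheta(\cdot)) - \sigma(x'(\vartheta),x'_\vartheta(\cdot))| \leq \lambda_\sigma(\alpha_X)(\|x(\vartheta)-x'(\vartheta)\| + \|x_\vartheta(\cdot)-x'_\vartheta(\cdot)\|_1)$. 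Combining, the difference of quality indices satisfies $|\gamma(\tau,z,w(\cdot),Q^u_\tau,v(\cdot)) - \gamma(\tau,z',w'(\cdot),Q^u_\tau,v(\cdot))| \leq \lambda_\varphi(\alpha)\big(\|z-z'\| + \|w(\cdot)-w'(\cdot)\|_1\big)$ with $\lambda_\varphi(\alpha) = (1 + \lambda_\sigma(\alpha_X))\lambda_*(\alpha)$, uniformly in $Q^u_\tau$ and $v(\cdot)$. Taking $\sup$ over $v(\cdot) \in \mathcal V_\tau$ and then $\inf$ over $Q^u_\tau$ in \cref{rho_u} — using that $|\inf_a F(a) - \inf_a G(a)| \leq \sup_a |F(a) - G(a)|$ and likewise for $\sup$ — yields \cref{phi_lipshiz_continuous} for $\rho^u$.

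Finally I would address continuity of $\rho^u$ in $\tau$. Fix $(z,w(\cdot))$ and $\tau < \tau'$ in $[t_0,\vartheta]$ (say both within a bounded region so that a single $P(\alpha)$ contains all the relevant data). The idea is to compare the game started at $\tau$ with the game started at $\tau'$ from the history generated on $[\tau,\tau']$; strategies and realizations on the shorter horizon $[\tau',\vartheta]$ can be identified, via the non-anticipativity structure, with their restrictions from $[\tau,\vartheta]$. Using \cref{lem:alpha_x-lambda_x} the motion moves at most $\lambda_X|\tau'-\tau|$ over $[\tau,\tau']$ and stays in $P(\alpha_X)$, so the shifted history differs from $w(\cdot)$ by $O(|\tau'-\tau|)$ in $\|\cdot\|_1$ (here \cref{lem:kappa} and \cref{cond:f_sublinear_growth} control the $w$-component), and the running-cost integral over $[\tau,\tau']$ is $O(|\tau'-\tau|)$ by \cref{cond:f_sublinear_growth}. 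Feeding these estimates through the already-proven Lipschitz property \cref{phi_lipshiz_continuous} and the bound on the cost tail gives $|\rho^u(\tau,z,w(\cdot)) - \rho^u(\tau',z,w(\cdot))| \to 0$ as $\tau' \to \tau$. The main obstacle is this last $\tau$-continuity step: one must be careful in matching non-anticipative strategies across the two time horizons (a strategy on $[\tau,\vartheta]$ restricts to one on $[\tau',\vartheta]$, and conversely any strategy on $[\tau',\vartheta]$ can be prepended with an arbitrary fixed control on $[\tau,\tau']$), and in checking that taking $\inf/\sup$ does not destroy the uniformity of the $O(|\tau'-\tau|)$ bounds — but this is exactly the scheme of Proposition 3.3 in \cite{Plaksin_2019}, which applies here with only notational changes. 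The proof for $\rho^v$ is obtained by the symmetric argument, completing the lemma.
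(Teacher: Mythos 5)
Your argument is correct and, in substance, follows the paper's own route. The Lipschitz step is essentially identical: same use of \cref{lem:motion_lipshiz_continuous}, \cref{lem:alpha_x-lambda_x} and \cref{cond:sigma_lipshiz_continuous} along a common strategy/realization pair, with the same constant $\lambda_\varphi = \lambda_*(\lambda_\sigma+1)$; whether one passes to the $\inf$/$\sup$ via $\varepsilon$-optimal choices (as the paper does) or via $|\inf F - \inf G| \le \sup|F-G|$ is immaterial. The one place where you diverge is the continuity in $\tau$: you propose to match non-anticipative strategies across the two horizons $[\tau,\vartheta]$ and $[\tau',\vartheta]$ by hand, and you correctly flag this bookkeeping as the main obstacle. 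The paper sidesteps it entirely by routing the argument through the stability estimates of \cref{def:stable}, already established in \cref{lem:rho_is_stable}: given $v(\cdot)$, u-stability produces a $u(\cdot)$ with $\rho^u(t,x(t),x_t(\cdot)) + \int_\tau^t f^0\,\mathrm d\xi \le \rho^u(\tau,z,w(\cdot)) + \varepsilon/3$, and then the already-proven Lipschitz property, \cref{lem:kappa}, and \cref{cond:f_sublinear_growth} close the gap between $\rho^u(t,x(t),x_t(\cdot))$ and $\rho^u(t,z,w(\cdot))$. Since that lemma is available to you, invoking it is both shorter and safer than re-deriving the dynamic programming principle inline. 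One small overstatement in your sketch: the shifted history $\kappa_t(\cdot)$ differs from $w(\cdot)$ only by $o(1)$ in $\|\cdot\|_1$ as $t\to\tau$ (this is exactly what \cref{lem:kappa} gives), not by $O(|t-\tau|)$, because $w(\cdot)$ is merely piecewise continuous; this is harmless since $\Phi$ only demands continuity in $\tau$, not a rate, but the claim as written should be weakened accordingly.
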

\begin{proof}
We will prove the lemma only for $\rho^u$ . For $\rho^v$, the proof is similar.

First, let us show that $\rho^u$ is Lipschitz continuous (see \cref{phi_lipshiz_continuous}). Let $\alpha > 0$. In accordance with \cref{lem:alpha_x-lambda_x,cond:sigma_lipshiz_continuous,lem:motion_lipshiz_continuous}, define  $\alpha_X = \alpha_X(\alpha)$, $\lambda_* = \lambda_*(\alpha)$, and $\lambda_\sigma = \lambda_\sigma(\alpha_X)$, respectively. Put $\lambda_\varphi = \lambda_* (\lambda_\sigma + 1)$. Let $\tau \in [t_0,\vartheta]$, $(z,w(\cdot)), (z',w'(\cdot)) \in P(\alpha)$, and $\varepsilon > 0$. By definition of $\rho^u$ (see \ref{rho_u}), on the one hand, there exists a non-anticipative strategy $\hat{Q}^u_\tau$ such that
\begin{displaymath}
\sup\limits_{v(\cdot) \in \mathcal{V}_\tau} \gamma(\tau,z,w(\cdot),\hat{Q}^u_\tau,v(\cdot)) \leq \rho^u(\tau,z,w(\cdot)) + \varepsilon / 2,
\end{displaymath}
and, on the other hand, there exists $\hat{v}(\cdot) \in \mathcal{V}_\tau$ such that
\begin{displaymath}
\rho^u(\tau,z',w'(\cdot)) \leq \gamma(\tau,z',w'(\cdot),\hat{Q}^u_\tau,\hat{v}(\cdot)) + \varepsilon / 2.
\end{displaymath}
Therefore, taking into account \cref{quality_index} and the choice of $\lambda_\varphi$, we derive
\begin{displaymath}
\begin{array}{c}
\rho^u(\tau,z',w'(\cdot)) - \rho^u(\tau,z,w(\cdot)) \leq |\sigma(x'(\vartheta),x'_\vartheta(\cdot)) - \sigma(x(\vartheta),x_\vartheta(\cdot))| \\[0.1cm]
+ \displaystyle\int\limits_\tau^\vartheta |f^0(\xi,x'(\xi),x'_\xi(\cdot),\hat{u}(\xi),\hat{v}(\xi)) - f^0(\xi,x(\xi),x_\xi(\cdot),\hat{u}(\xi),\hat{v}(\xi))| \mathrm{d} \xi + \varepsilon \\[0.5cm]
\leq \lambda_\varphi\big(\|z' - z\| + \|w'(\cdot) - w(\cdot)\|_1\big) + \varepsilon.
\end{array}
\end{displaymath}
where $\hat{u}(\cdot) = \hat{Q}^u_\tau[\hat{v}(\cdot)](\cdot)$. Since this estimate holds for any $(z,w(\cdot)), (z',w'(\cdot)) \in P(\alpha)$, and $\varepsilon > 0$, we obtain \cref{phi_lipshiz_continuous}. Thus, Lipschitz continuity of $\rho^u$ is proved.

Let $(z,w(\cdot)) \in \mathbb R^n \times \mathrm{PC}$. Let us show that the function $[t_0,\vartheta] \ni \tau \mapsto \rho^u = \rho^u(\tau,z,w(\cdot)) \in \mathbb R$ is uniformly continuous. Let $\varepsilon > 0$. Put $\alpha = \max\{\|z\|,\|w(\cdot)\|_\infty\}$. In accordance with \cref{lem:alpha_x-lambda_x} and  Lipschitz continuous of $\rho^u$ (that is proved above), define  $\alpha_X = \alpha_X(\alpha)$, $\lambda_X = \lambda_X(\alpha)$, and $\lambda_\varphi = \lambda_\varphi(\alpha_X)$. Due to \cref{lem:kappa}, there exists $\delta_* > 0$ such that
\begin{displaymath}
\|\kappa_t(\cdot) - w(\cdot)\|_1 \leq \varepsilon/(6\lambda_\varphi),\ \ t \in [\tau,\tau+\delta_*] \cap [t_0,\vartheta],\ \ \kappa(\cdot) \in \Lambda_0(\tau,z,w(\cdot)),\ \ \tau \in [t_0,\vartheta].
\end{displaymath}
Taking the constant $c_f$ from \cref{cond:f_sublinear_growth}, put
\begin{displaymath}
\delta = \min\big\{1,\delta_*, \varepsilon / (6 \lambda_\varphi \lambda_X (1 + \vartheta - t_0)), \varepsilon / (3 c_f (1 + (2 + h) \alpha_X))\big\}
\end{displaymath}
Let $\tau,t \in [t_0,\vartheta]$ be such that $|\tau - t| \leq \delta$. Without loss of generality, suppose that $\tau \leq t$. Let $v(\cdot) \in \mathcal{V}_\tau$. According to the first estimate in \cref{def:stable} and \cref{lem:rho_is_stable}, there exists $u(\cdot) \in \mathcal{U}_\tau$ such that, for the motion $x(\cdot) = x(\cdot\,|\,\tau,z,w(\cdot),u(\cdot),v(\cdot))$ of system \cref{dynamical_system,initial_condition}, we have
\begin{equation}\label{lem:rho_in_Phi:stable}
\rho^u(t,x(t),x_t(\cdot)) + \int\limits_\tau^t f^0(\xi,x(\xi),x_\xi(\cdot),u(\xi),v(\xi)) \mathrm{d} \xi \leq \rho^u(\tau,z,w(\cdot)) + \varepsilon / 3.
\end{equation}
Due to the choice of $\lambda_X$, $\lambda_\varphi$, $\delta_*$, $\delta$, we derive
\begin{equation}\label{lem:rho_in_Phi:lipschitz_continuous}
\begin{array}{c}
|\rho^u(t,x(t),x_t(\cdot)) - \rho^u(t,z,w(\cdot))| \\[0.2cm]
\leq \lambda_\varphi\big(\|x(t) - z\| + \|x_t(\cdot) - \kappa_t(\cdot)\|_1 + \|\kappa_t(\cdot) - w(\cdot)\|_1\big) \leq \varepsilon / 3.
\end{array}
\end{equation}
According to the choice of $\alpha_X$, $c_f$, and $\delta$, we get
\begin{equation}\label{lem:rho_in_Phi:f_sublinear_growth}
\int\limits_\tau^t |f^0(\xi,x(\xi),x_\xi(\cdot),u(\xi),v(\xi))| \mathrm{d} \xi \leq c_f(1 + (2 + h) \alpha_X)(t - \tau) \leq \varepsilon / 3.
\end{equation}
From estimates \cref{lem:rho_in_Phi:stable,lem:rho_in_Phi:lipschitz_continuous,lem:rho_in_Phi:f_sublinear_growth}, we obtain $\rho^u(t,z,w(\cdot)) - \rho^u(\tau,z,w(\cdot)) \leq \varepsilon$. The inequality $\rho^u(t,z,w(\cdot)) - \rho^u(\tau,z,w(\cdot)) \geq  - \varepsilon$ can be proved in the similar way, using the second estimate in \cref{def:stable} instead of the first one.
\end{proof}


\section{Existence of viscosity solutions} In the section, we prove that the lower value functional $\rho^u$ and the upper value functional $\rho^v$, defined in \cref{rho_u,rho_v}, respectively, are viscosity solutions of problem \cref{Hamilton-Jacobi_equation,terminal_condition}, that implies the existence of viscosity solutions. Let us first prove auxiliary lemmas.

\begin{lemma}\label{lem:tilde_phi}
Let $\varphi \in \Phi$, $(\tau,z,w(\cdot)) \in \mathbb G$, and $\kappa(\cdot) \in \Lambda_0(\tau,z,w(\cdot))$. Then the function $\tilde{\varphi}$, defined by \cref{tilde_phi}, is continuous.
\end{lemma}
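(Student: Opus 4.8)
The plan is to show that $\tilde\varphi(t,x) = \varphi(t,x,\kappa_t(\cdot))$ is continuous on $[\tau,\vartheta]\times\mathbb R^n$ by splitting the increment into a part handled by the Lipschitz property \cref{phi_lipshiz_continuous} of $\varphi$ (in the $z$ and $w(\cdot)$ arguments) and a part handled by the continuity of $\varphi$ with respect to its first argument. Fix a point $(t,x)\in[\tau,\vartheta]\times\mathbb R^n$ and take a sequence $(t_k,x_k)\to(t,x)$. First I would choose $\alpha>0$ large enough that $(x,\kappa_t(\cdot))$ and all $(x_k,\kappa_{t_k}(\cdot))$ lie in $P(\alpha)$: this is possible because $\kappa(\cdot)\in\Lambda_0(\tau,z,w(\cdot))$ is a fixed bounded (indeed piecewise continuous on $[\tau-h,\vartheta]$, constant $\equiv z$ on $[\tau,\vartheta]$) function, so $\|\kappa_s(\cdot)\|_\infty$ is bounded uniformly in $s\in[\tau,\vartheta]$, and $\|x_k\|$ is bounded. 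With $\lambda_\varphi=\lambda_\varphi(\alpha)$ from \cref{phi_lipshiz_continuous}, write
\begin{displaymath}
|\tilde\varphi(t_k,x_k)-\tilde\varphi(t,x)| \le |\varphi(t_k,x_k,\kappa_{t_k}(\cdot))-\varphi(t_k,x,\kappa_t(\cdot))| + |\varphi(t_k,x,\kappa_t(\cdot))-\varphi(t,x,\kappa_t(\cdot))|.
\end{displaymath}

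The second term tends to $0$ as $k\to\infty$ directly from the assumed continuity of $\varphi$ in its first argument (with the other two arguments held fixed at $x$ and $\kappa_t(\cdot)$), which is part of the definition of the class $\Phi$. For the first term, I would apply \cref{phi_lipshiz_continuous} to get the bound $\lambda_\varphi\big(\|x_k-x\| + \|\kappa_{t_k}(\cdot)-\kappa_t(\cdot)\|_1\big)$; the term $\|x_k-x\|\to 0$, so it remains to show $\|\kappa_{t_k}(\cdot)-\kappa_t(\cdot)\|_1\to 0$ as $t_k\to t$. This is where the structure of $\Lambda_0$ matters: since $\kappa(\cdot)$ equals the fixed function $w(\cdot-\tau)$ on $[\tau-h,\tau)$ and equals the constant $z$ on $[\tau,\vartheta]$, the shifted segment $\kappa_s(\xi)=\kappa(s+\xi)$, $\xi\in[-h,0)$, depends on $s$ only through which part of the graph the window $[s-h,s)$ sees. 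As $s$ varies continuously, the only "discontinuities" of $s\mapsto\kappa_s(\cdot)$ are jumps of $\kappa$ itself (finitely many, since $\kappa(\cdot)\in\mathrm{PC}([\tau-h,\vartheta],\mathbb R^n)$), and these are integrated against $\mathrm d\xi$, so they contribute nothing in the $\|\cdot\|_1$-norm. Concretely, $\|\kappa_{s}(\cdot)-\kappa_{s'}(\cdot)\|_1 = \int_{-h}^0 \|\kappa(s+\xi)-\kappa(s'+\xi)\|\,\mathrm d\xi$, and this $\to 0$ as $s'\to s$ by the $L^1$-continuity of translation applied to the bounded piecewise continuous function $\kappa(\cdot)$ (equivalently, by dominated convergence, using that $\kappa(s'+\xi)\to\kappa(s+\xi)$ for all but finitely many $\xi$ and $\|\kappa\|$ is bounded on the relevant interval). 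This is essentially the content of \cref{lem:kappa} and can be obtained by the same argument.

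The main obstacle — though a mild one — is the $L^1$-continuity of $s\mapsto\kappa_s(\cdot)$: one must be careful that $\kappa(\cdot)$ need not be continuous, only piecewise continuous, so pointwise convergence of the translates fails at the jump points, and the argument must route through the integral (dominated convergence or the standard density/approximation argument for continuity of translation in $L^1$). Everything else is a routine triangle-inequality decomposition combined with the two defining properties of the class $\Phi$. I would also remark that joint continuity follows from the above sequential argument since the bounds obtained are uniform on bounded sets. This completes the proof.
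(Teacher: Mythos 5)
Your proposal is correct and follows essentially the same route as the paper: the same triangle-inequality split into a Lipschitz part (in $(x,\kappa_t(\cdot))$, after placing everything in a common $P(\alpha)$) and a continuity-in-$\tau$ part, with the convergence $\|\kappa_{t_k}(\cdot)-\kappa_t(\cdot)\|_1\to 0$ supplied by \cref{lem:kappa}, which you re-derive via $L^1$-continuity of translation but correctly identify as the same fact.
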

\begin{proof}
Let $(t,x),(t^i,x^i) \in [\tau,\vartheta] \times \mathbb R^n$, $i \in \mathbb N$, and $t^i \to t$, $x^i \to x$ as $i \to \infty$. Then, due to \cref{lem:kappa}, we have $\|\kappa_{t^i}(\cdot) - \kappa_t(\cdot)\|_1 \to 0$ as $i \to \infty$. Let $\alpha_* > 0$ be such that $(z,w(\cdot)) \in P(\alpha_*)$ and
$\|x\| \leq \alpha_*$, $\|x^i\| \leq \alpha_*$, $i \in \mathbb N$. Then, we obtain $(x,\kappa_t(\cdot)), (x^i,\kappa_{t^i}(\cdot)) \in P(\alpha_*)$, $i \in \mathbb N$. Thus, due to the inclusion $\varphi \in \Phi$, we conclude
\begin{displaymath}
\begin{array}{c}
|\tilde{\varphi}(t,x) - \tilde{\varphi}(t^i,x^i)| = |\varphi(t,x,\kappa_t(\cdot)) - \varphi(t^i,x^i,\kappa_{t^i}(\cdot))| \\[0.2cm]
\leq |\varphi(t,x,\kappa_t(\cdot)) - \varphi(t^i,x,\kappa_t(\cdot))|
+ \lambda_\varphi(\alpha_*) \big(\|x - x^i\| + \|\kappa_t(\cdot) - \kappa_{t^i}(\cdot)\|_1\big) \to 0,
\end{array}
\end{displaymath}
as $i \to \infty$.
\end{proof}

Denote
\begin{equation}\label{g}
g(\tau,z,w(\cdot),u,v,s) = \langle f(\tau,z,w(\cdot),u,v),s \rangle + f^0(\tau,z,w(\cdot),u,v).
\end{equation}

\begin{lemma}\label{lem:tilde_g}
Let $(\tau,z,w(\cdot)) \in \mathbb G$ and $\kappa(\cdot) \in \Lambda_0(\tau,z,w(\cdot))$. Let $\delta_w > 0$ be such that $w(\cdot)$ is continuous on $[-h,-h+\delta_w]$. Then the following function is continuous:
\begin{displaymath}
\tilde{g}(t,x,u,v,s) = g(t,x,\kappa_t(\cdot),u,v,s),\quad
(t,x,u,v,s) \in [\tau,\tau + \delta_w] \times \mathbb R^n \times \mathbb U \times \mathbb V \times \mathbb R^n.
\end{displaymath}
\end{lemma}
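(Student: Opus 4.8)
The plan is to reduce continuity of $\tilde g$ to continuity of its three building blocks: the map $t \mapsto \kappa_t(\cdot)$ (now viewed into $\mathrm{PC}$ with a suitable norm rather than only $\|\cdot\|_1$), and the regularity of $f$ and $f^0$ supplied by \cref{cond:f_continuous,cond:f_lipshiz_continuous}. First I would fix a sequence $(t^i,x^i,u^i,v^i,s^i) \to (t,x,u,v,s)$ in $[\tau,\tau+\delta_w] \times \mathbb R^n \times \mathbb U \times \mathbb V \times \mathbb R^n$ and write, using \cref{g},
\begin{displaymath}
\tilde g(t^i,x^i,u^i,v^i,s^i) - \tilde g(t,x,u,v,s)
= \langle f(t^i,x^i,\kappa_{t^i}(\cdot),u^i,v^i), s^i\rangle - \langle f(t,x,\kappa_t(\cdot),u,v), s\rangle + \big(f^0(t^i,x^i,\kappa_{t^i}(\cdot),u^i,v^i) - f^0(t,x,\kappa_t(\cdot),u,v)\big).
\end{displaymath}
I would then split each of these differences into a "history part'' where only the $\mathrm{PC}$-argument and the $x$-argument change, handled by \cref{cond:f_lipshiz_continuous}, and a "point part'' where only $(t,u,v)$ change, handled by \cref{cond:f_continuous}; boundedness of $s^i$ and of $f$ along the sequence (via \cref{cond:f_sublinear_growth} and the boundedness of $\kappa_{t^i}(\cdot)$ in the relevant norms) takes care of the inner-product term.

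The key point, and the reason $\delta_w$ enters the statement, is the following: \cref{cond:f_lipshiz_continuous} measures the history argument in the norm $\|r(\cdot) - r'(\cdot)\|_1 + \|r(-h) - r'(-h)\|$, i.e.\ it is sensitive to the pointwise value of the history at the left endpoint $-h$. Since $\kappa(\cdot) \in \Lambda_0(\tau,z,w(\cdot))$ is the constant extension, we have $\kappa_t(\xi) = w(t+\xi-\tau)$ whenever $t+\xi-\tau < 0$; in particular $\kappa_t(-h) = w(t - \tau - h)$ for $t \in [\tau,\tau+\delta_w]$, and by the choice of $\delta_w$ the function $w(\cdot)$ is continuous on $[-h,-h+\delta_w]$, so $t \mapsto \kappa_t(-h)$ is continuous on $[\tau,\tau+\delta_w]$. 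Combined with \cref{lem:kappa}, which gives $\|\kappa_{t^i}(\cdot) - \kappa_t(\cdot)\|_1 \to 0$, this yields
\begin{displaymath}
\|\kappa_{t^i}(\cdot) - \kappa_t(\cdot)\|_1 + \|\kappa_{t^i}(-h) - \kappa_t(-h)\| \to 0 \quad\text{as } i \to \infty,
\end{displaymath}
which is exactly the quantity controlled by \cref{cond:f_lipshiz_continuous}. (Here one also fixes $\alpha > 0$ with $(x^i,\kappa_{t^i}(\cdot)), (x,\kappa_t(\cdot)) \in P(\alpha)$ for all $i$, which is possible since $x^i$ is bounded and, by \cref{lem:alpha_x-lambda_x} or directly from the definition of $\Lambda_0$, $\|\kappa_{t^i}(\cdot)\|_\infty \le \max\{\|z\|,\|w(\cdot)\|_\infty\}$, so that the constant $\lambda_f = \lambda_f(\alpha)$ is available.)

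With these estimates in place the argument is routine: write $\lambda_f = \lambda_f(\alpha)$ and bound
\begin{displaymath}
\begin{array}{l}
\big\|f(t^i,x^i,\kappa_{t^i}(\cdot),u^i,v^i) - f(t,x,\kappa_t(\cdot),u,v)\big\|
+ \big|f^0(t^i,x^i,\kappa_{t^i}(\cdot),u^i,v^i) - f^0(t,x,\kappa_t(\cdot),u,v)\big|\\[0.2cm]
\qquad \le \lambda_f\big(\|x^i - x\| + \|\kappa_{t^i}(\cdot) - \kappa_t(\cdot)\|_1 + \|\kappa_{t^i}(-h) - \kappa_t(-h)\|\big)\\[0.2cm]
\qquad\quad + \big\|f(t^i,x,\kappa_t(\cdot),u^i,v^i) - f(t,x,\kappa_t(\cdot),u,v)\big\|
+ \big|f^0(t^i,x,\kappa_t(\cdot),u^i,v^i) - f^0(t,x,\kappa_t(\cdot),u,v)\big|,
\end{array}
\end{displaymath}
where the first group tends to $0$ by the displayed convergence above together with $x^i \to x$, and the second group tends to $0$ by \cref{cond:f_continuous}. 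Finally,
\begin{displaymath}
\big|\langle f(t^i,x^i,\kappa_{t^i}(\cdot),u^i,v^i),s^i\rangle - \langle f(t,x,\kappa_t(\cdot),u,v),s\rangle\big|
\le \big\|f(t^i,x^i,\kappa_{t^i}(\cdot),u^i,v^i) - f(t,x,\kappa_t(\cdot),u,v)\big\|\,\|s^i\| + \big\|f(t,x,\kappa_t(\cdot),u,v)\big\|\,\|s^i - s\|,
\end{displaymath}
and since $\|s^i\|$ is bounded this also tends to $0$. Hence $\tilde g(t^i,x^i,u^i,v^i,s^i) \to \tilde g(t,x,u,v,s)$, which proves continuity. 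I expect the only genuinely non-formal step to be the observation about $\kappa_t(-h)$: without the hypothesis that $w(\cdot)$ is continuous near $-h$ (which is the role of $\delta_w$), the map $t \mapsto \kappa_t(-h)$ need not be continuous and the Lipschitz estimate from \cref{cond:f_lipshiz_continuous} cannot be closed — everything else is a standard triangle-inequality splitting.
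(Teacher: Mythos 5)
Your proposal is correct and is exactly the argument the paper intends: the paper only remarks that the proof is carried out similarly to \cref{lem:tilde_phi} using \cref{cond:f_continuous,cond:f_sublinear_growth,cond:f_lipshiz_continuous}, and your sequence argument with the triangle-inequality splitting is that adaptation. In particular, you correctly identify the one non-routine point — that the hypothesis on $\delta_w$ is precisely what makes $t \mapsto \kappa_t(-h) = w(t-\tau-h)$ continuous, so that the $\|r(-h)-r'(-h)\|$ term in \cref{cond:f_lipshiz_continuous} can be controlled.
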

The proof is carried out similarly to \cref{lem:tilde_phi}, using \cref{cond:f_continuous,cond:f_lipshiz_continuous,cond:f_sublinear_growth}.

\begin{lemma}\label{lem:rho_is_viscosity_solution}
The lower value functional $\rho^u$ and the upper value functional $\rho^v$ are viscosity solutions of problem \cref{Hamilton-Jacobi_equation,terminal_condition}.
\end{lemma}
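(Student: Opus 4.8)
The plan is to establish the two one-sided viscosity properties \cref{sub_viscosity_solution} and \cref{super_viscosity_solution} for $\rho^u$; the argument for $\rho^v$ is symmetric, with the roles of the players and the inequalities interchanged. Throughout I would fix a point $(\tau,z,w(\cdot)) \in \mathbb G$ with $\tau < \vartheta$, a test function $\psi \in \mathrm{C}^1(\mathbb R \times \mathbb R^n, \mathbb R)$, a radius $\delta > 0$, and a selection $\kappa(\cdot) \in \Lambda_0(\tau,z,w(\cdot))$. The terminal condition \cref{terminal_condition} for $\rho^u$ is immediate from the definitions \cref{rho_u} and \cref{quality_index}, since at $\tau = \vartheta$ the integral term vanishes and $\gamma = \sigma(z,w(\cdot))$.

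For the subsolution property \cref{sub_viscosity_solution}: assume $\rho^u(\tau,z,w(\cdot)) - \psi(\tau,z) \leq \rho^u(t,x,\kappa_t(\cdot)) - \psi(t,x)$ for all $(t,x) \in O^+_\delta(\tau,z)$, and suppose for contradiction that $\partial \psi(\tau,z)/\partial \tau + H(\tau,z,w(\cdot),\nabla_z \psi(\tau,z)) > 0$. By \cref{Hamiltonian} and Condition~\ref{cond:f_saddle_point}, $H$ is a $\min_u\max_v$ of the quantity $g$ in \cref{g}; pick $v_* \in \mathbb V$ achieving the outer $\max$ in the $\max_v\min_u$ form at $s = \nabla_z \psi(\tau,z)$, so that $\langle f(\tau,z,w(\cdot),u,v_*), \nabla_z\psi(\tau,z)\rangle + f^0(\tau,z,w(\cdot),u,v_*) > -\partial\psi(\tau,z)/\partial\tau$ for every $u \in \mathbb U$. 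Now invoke stability of $\rho^u$ (\cref{lem:rho_is_stable}, via \cref{def:stable}): applying the first estimate there with the constant control $v(\cdot) \equiv v_*$ on $[\tau, t]$ for small $t - \tau$, one obtains a realization $u(\cdot) \in \mathcal U_\tau$ with $\rho^u(t,x(t),x_t(\cdot)) + \int_\tau^t f^0(\xi,x(\xi),x_\xi(\cdot),u(\xi),v_*)\,\mathrm d\xi \leq \rho^u(\tau,z,w(\cdot)) + o(t-\tau)$. Using the hypothesis with the point $(t,x(t)) \in O^+_\delta(\tau,z)$ (legitimate for $t - \tau$ small by \cref{lem:alpha_x-lambda_x}), combining with the chain rule for $\psi$ along the Lipschitz motion $x(\cdot)$, and estimating $f$, $f^0$ at $(\xi, x(\xi), x_\xi(\cdot))$ by their values at $(\tau, z, w(\cdot))$ — here I would use Condition~\ref{cond:f_lipshiz_continuous}, the Lipschitz bound on $x(\cdot)$ from \cref{lem:alpha_x-lambda_x}, the continuity of $\tilde g$ from \cref{lem:tilde_g}, and \cref{lem:kappa} to control $\|x_\xi(\cdot) - w(\cdot)\|_1$ — one divides by $t - \tau$ and lets $t \to \tau^+$ to derive $\partial\psi(\tau,z)/\partial\tau + \langle f(\tau,z,w(\cdot),u_*,v_*), \nabla_z\psi(\tau,z)\rangle + f^0(\tau,z,w(\cdot),u_*,v_*) \leq 0$ for some limiting $u_* \in \mathbb U$, contradicting the choice of $v_*$. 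The supersolution property \cref{super_viscosity_solution} is obtained the same way: assume the reverse inequality on $O^+_\delta(\tau,z)$ and $\partial\psi(\tau,z)/\partial\tau + H(\cdots) < 0$, pick $u_* \in \mathbb U$ achieving the outer $\min$ in the $\min_u\max_v$ form, use the second (u-stability) estimate in \cref{def:stable} with $u(\cdot) \equiv u_*$, and reach a contradiction analogously.

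The main obstacle is the passage from the functional (infinite-dimensional) statement to the finite-dimensional one: freezing the delayed argument at $w(\cdot)$ and replacing $\rho^u(t,x(t),x_t(\cdot))$ by $\rho^u(t,x(t),\kappa_t(\cdot))$ so that the test-function hypothesis, which is phrased only along the frozen extension $\kappa(\cdot)$, can actually be applied. This is exactly where the choice of the piecewise-continuous history space and the Lipschitz class $\Phi$ pays off: by \cref{phi_lipshiz_continuous} and \cref{lem:alpha_x-lambda_x},
\begin{displaymath}
|\rho^u(t,x(t),x_t(\cdot)) - \rho^u(t,x(t),\kappa_t(\cdot))| \leq \lambda_\varphi \|x_t(\cdot) - \kappa_t(\cdot)\|_1,
\end{displaymath}
and since both $x(\cdot)$ and $\kappa(\cdot)$ start from $(z,w(\cdot))$ and are Lipschitz on $[\tau,t]$, one has $\|x_t(\cdot) - \kappa_t(\cdot)\|_1 \leq \text{const}\cdot(t - \tau)^2 = o(t - \tau)$, so this substitution is harmless after dividing by $t - \tau$. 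The remaining estimates — Lipschitz continuity of $\rho^u$ in $(z,w(\cdot))$, Lipschitz continuity and boundedness of the motions, and continuity of $\tilde g$ — are all already available as \cref{lem:rho_in_Phi}, \cref{lem:alpha_x-lambda_x}, \cref{lem:motion_lipshiz_continuous}, \cref{lem:kappa}, and \cref{lem:tilde_g}, so the proof is essentially an orchestration of these facts around the dynamic programming (stability) inequalities.
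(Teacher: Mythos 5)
Your proposal is correct and follows essentially the same route as the paper: negate the viscosity property, use \cref{cond:f_saddle_point} to extract a constant control $v_0$ for which $\min_u g$ stays uniformly positive near the base point (via \cref{lem:tilde_g}), apply the stability estimate of \cref{lem:rho_is_stable}, replace $x_t(\cdot)$ by $\kappa_t(\cdot)$ at cost $O((t-\tau)^2)$ in the $\|\cdot\|_1$-norm, and integrate $\psi$ along the motion to reach a contradiction. The only loose spot is the appeal to "a limiting $u_*$" from a merely measurable control; the paper instead bounds the integrand pointwise below by $\min_{u\in\mathbb U} g(\cdot,u,v_0,\cdot)\geq\theta/2$ on the whole neighborhood, which avoids any limit extraction and is the cleaner way to finish your argument.
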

\begin{proof}
We will prove the lemma only for $\rho^u$. For $\rho^v$, the proof is similar.

By definition of $\rho^u$ (see \cref{rho_u}), it satisfies terminal condition \cref{terminal_condition}. Due to \cref{lem:rho_in_Phi}, the inclusion $\rho^u \in \Phi$ holds. Thus, in accordance with \cref{def:viscosity_solution} of a viscosity solution, in order to prove the lemma, it is sufficient to show that $\rho^u$ satisfies \cref{sub_viscosity_solution,super_viscosity_solution}.

For the sake of a contradiction, suppose that \cref{sub_viscosity_solution} is not valid for $\rho^u$. Then, taking into account \cref{cond:f_saddle_point}, definitions \cref{Hamiltonian,g}, and \cref{lem:tilde_g}, there exist $(\tau,z,w(\cdot)) \in \mathbb G$, $\tau < \vartheta$, $\psi \in \mathrm{C}^1(\mathbb R \times \mathbb R^n,\mathbb R)$, $v_0 \in \mathbb V$, $\delta_0 \in (0,h)$, and $\theta > 0$ such that
\begin{equation}\label{lem:rho_is_viscosity_solution:contradiction}
\begin{array}{c}
\rho^u(\tau,z,w(\cdot)) - \psi(\tau,z) \leq \rho^u(t,x,\kappa_t(\cdot)) - \psi(t,x), \\[0.2cm]
\partial \psi(t,x) / \partial \tau + \min\limits_{u \in \mathbb U} g(t,x,\kappa_t(\cdot),u,v_0,\nabla_z \psi(t,x)) \geq \theta,
\end{array}
\quad (t, x) \in O^+_{\delta_0}(\tau, z).
\end{equation}
Let $\alpha = \max\{\|z\|,\|w(\cdot)\|_\infty\}$. According to \cref{lem:alpha_x-lambda_x}, \cref{cond:f_lipshiz_continuous}, and the inclusion $\rho^u \in \Phi$, define $\alpha_X = \alpha_X(\alpha) > 0$, $\lambda_X = \lambda_X(\alpha) > 0$, $\lambda_f = \lambda_f(\alpha_X) > 0$, and $\lambda_\varphi = \lambda_\varphi(\alpha_X) > 0$. Denote
\begin{equation}\label{lem:rho_is_viscosity_solution:c_1_delta_1}
c_1 = \max\limits_{(t, x) \in O^+_{\delta_0}(\tau, z)} \nabla_z\psi(t,x),\quad
\delta_1 = \min\bigg\{\frac{\delta_0}{1 + \lambda_X},\frac{\theta}{8\lambda_\varphi\lambda_X},\frac{\sqrt{\theta}}{\sqrt{2\lambda_f (1+c_1)}\lambda_X}\bigg\}.
\end{equation}
Then, for every $u(\cdot) \in \mathcal{U}_\tau$ and $v(\cdot) \in \mathcal{V}_\tau$, the motion $x(\cdot) = x(\cdot\,|\,\tau,z,w(\cdot),u(\cdot),v(\cdot))$ of system \cref{dynamical_system,initial_condition} satisfies the estimates
\begin{displaymath}
\begin{array}{c}
(x(t),x_t(\cdot)),(x(t),\kappa_t(\cdot)) \in P(\alpha_X),\quad |x(t) - z| \leq \lambda_X |t - \tau| \leq \delta_0,\\[0.2cm]
\|x_t(\cdot) - \kappa_t(\cdot)\|_1 \leq \lambda_X (t - \tau)^2 \leq \min\bigg\{\displaystyle\frac{\delta_1\theta}{8 \lambda_\varphi},\frac{\theta}{2\lambda_f(1 + c_1)}\bigg\},
\end{array}
\quad t \in [\tau,\tau + \delta_1],
\end{displaymath}
wherefrom, taking into account the choice of $\lambda_\varphi$, $\lambda_f$, and $c_1$, together with definition \cref{g} of $g$ and the equality $x_t(-h) = \kappa_t(-h) = w(t-\tau-h)$, $t \in [\tau,\tau+h)$, we derive
\begin{displaymath}
\begin{array}{c}
(t,x(t)) \in O^+_{\delta_0}(\tau, z),\quad |\rho^u(t,x(t),x_t(\cdot)) - \rho^u(t,x(t),\kappa_t(\cdot))| \leq \delta_1 \theta / 8,\\[0.2cm]
\!|g(t,x(t),x_t(\cdot),u(t),v_0,\nabla_z\psi(t,x(t))) - g(t,x(t),\kappa_t(\cdot),u(t),v_0,\nabla_z\psi(t,x(t)))| \leq \theta / 2.
\end{array}
\end{displaymath}
Hence, from \cref{lem:rho_is_viscosity_solution:contradiction}, we obtain
\begin{equation}\label{lem:rho_is_viscosity_solution:contradiction_2}
\begin{array}{c}
\rho^u(\tau,z,w(\cdot)) - \psi(\tau,z) \leq \rho^u(t,x(t),x_t(\cdot)) - \psi(t,x(t)) + \delta_1 \theta / 8, \\[0.2cm]
\partial \psi(t,x(t)) / \partial \tau + g(t,x(t),x_t(\cdot),u(t),v_0,\nabla_z\psi(t,x(t))) \geq \theta/2,
\end{array}
\ \ t \in [\tau,\tau + \delta_1].
\end{equation}
Due to \cref{lem:rho_is_stable} and the first estimate in \cref{def:stable}, for $t = \tau + \delta_1$,  there exists $u(\cdot) \in \mathcal{U}_\tau$ such that the motion $x(\cdot) = x(\cdot\,|\,\tau,z,w(\cdot),u(\cdot),v(\cdot) \equiv v_0)$ of system \cref{dynamical_system,initial_condition} satisfies the inequality
\begin{equation}\label{lem:rho_is_viscosity_solution:stable}
\rho^u(t,x(t),x_t(\cdot)) + \int\limits_\tau^t f^0(\xi,x(\xi),x_\xi(\cdot),u(\xi),v_0)\mathrm{d} \xi \leq \rho^u(\tau,z,w(\cdot)) + \delta_1 \theta / 8.
\end{equation}
Using continuous differentiability of $\psi$, taking into account \cref{dynamical_system}, we derive
\begin{displaymath}
\psi(t,x(t)) - \psi(\tau,z) = \! \int\limits_\tau^t\!\bigg(\frac{\partial \psi(\xi,x(\xi))}{\partial \tau} + \langle f(\xi,x(\xi),x_\xi(\cdot),u(\xi),v_0),\nabla_z\psi(\xi,x(\xi))\rangle\bigg)\mathrm{d}\xi.
\end{displaymath}
Using this formula together with inequalities \cref{lem:rho_is_viscosity_solution:contradiction_2,lem:rho_is_viscosity_solution:stable}, we get the contradictory inequality $\theta/2 \leq \theta / 4$. Thus, \cref{sub_viscosity_solution} is valid for $\rho^u$. In the same way we can prove that \cref{super_viscosity_solution} is valid for $\rho^u$.
\end{proof}


\section{Uniqueness of the viscosity solution} In the section, we prove the uniqueness of the viscosity solution. The proof mainly follows the scheme from \cite[Chapter III, Theorem 3.15]{Bardi_Capuzzo-Dolcetta_1997}, but has some specifics related to the functional position space $\mathbb G$.

\begin{lemma}\label{lem:zeta_star}
Let the Hamiltonian $H$ be defined by \cref{Hamiltonian}. One can choose $\zeta_* \in (0, h)$ such that the following statement holds. Let $(\tau_*,z_*,w_*(\cdot)) \in \mathbb G$, $\tau_* < \vartheta$, and $\nu_* \in (\tau_*,\min\{\tau_* + \zeta_*, \vartheta\}]$. Then there exists $c_* > 0$ such that, if
\begin{equation}\label{lem:zeta_star:condition}
t \in [\tau_*,\nu_*],\quad (x,r(\cdot)) \in P(\alpha_*),\quad
\alpha_* = (\nu_* - \tau_*) c_* + 1 + \max\{\|z_*\|,\|w_*(\cdot)\|_\infty\},
\end{equation}
where $P(\alpha_*)$ is defined according to \cref{P}, then
\begin{equation}\label{lem:c_star:statement}
|H(t,x,r(\cdot),s) - H(t,x,r(\cdot),s')| \leq c_*\|s - s'\|,\quad s, s' \in \mathbb R^n.
\end{equation}
\end{lemma}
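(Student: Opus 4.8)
The plan is to estimate the Hamiltonian $H$ defined in \cref{Hamiltonian} by controlling how its dependence on $s$ is affected by the data $f$, and to exploit the fact that on a short time interval $[\tau_*,\nu_*]$ of length at most $\zeta_*$ the motion histories $r(\cdot)$ that appear in $P(\alpha_*)$ stay bounded, so that \cref{cond:f_sublinear_growth} gives a uniform bound on $\|f\|$. First I would observe that for any $(t,x,r(\cdot))\in\mathbb G$ and any fixed $u\in\mathbb U$, $v\in\mathbb V$ the map $s\mapsto \langle f(t,x,r(\cdot),u,v),s\rangle + f^0(t,x,r(\cdot),u,v)$ is affine, hence Lipschitz in $s$ with constant $\|f(t,x,r(\cdot),u,v)\|$. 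Since $H$ is obtained from these affine functions by taking $\min_{u}\max_{v}$, and the operations $\min$ and $\max$ over fixed compact index sets preserve Lipschitz constants (the Lipschitz constant of a sup or inf of a family of $L$-Lipschitz functions is at most $L$), we get
\begin{displaymath}
|H(t,x,r(\cdot),s) - H(t,x,r(\cdot),s')| \leq \Big(\sup_{u\in\mathbb U,\,v\in\mathbb V}\|f(t,x,r(\cdot),u,v)\|\Big)\,\|s-s'\|.
\end{displaymath}

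Next I would bound the supremum on the right. By \cref{cond:f_sublinear_growth}, $\|f(t,x,r(\cdot),u,v)\|\leq c_f(1+\|x\|+\|r(\cdot)\|_1+\|r(-h)\|)$. Under the constraint \cref{lem:zeta_star:condition} we have $(x,r(\cdot))\in P(\alpha_*)$, so $\|x\|\leq\alpha_*$, $\|r(-h)\|\leq\alpha_*$, and $\|r(\cdot)\|_1\leq h\|r(\cdot)\|_\infty\leq h\alpha_*$; hence $\|f(t,x,r(\cdot),u,v)\|\leq c_f(1+(2+h)\alpha_*)$ for all admissible $t,x,r(\cdot),u,v$. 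This already yields a bound of the desired form, but with a right-hand side that depends on $\alpha_*$, which in turn depends on $c_*$ — so the subtlety is that $c_*$ must be chosen to close this circular dependence. I would resolve it by first fixing $\zeta_*\in(0,h)$ arbitrarily (any choice works here; it is presumably constrained for use in later lemmas), and then, given $(\tau_*,z_*,w_*(\cdot))$ and $\nu_*$, setting $\beta=\max\{\|z_*\|,\|w_*(\cdot)\|_\infty\}$ and looking for $c_*$ satisfying $c_* \geq c_f\big(1+(2+h)((\nu_*-\tau_*)c_* + 1 + \beta)\big)$. Writing $L=\nu_*-\tau_*\in(0,\zeta_*]$, this is $c_*(1-c_f(2+h)L)\geq c_f(1+(2+h)(1+\beta))$.

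The one genuine obstacle is ensuring the coefficient $1-c_f(2+h)L$ is positive, i.e. that $L$ is small enough; this is exactly why the statement restricts to $\nu_*-\tau_* \leq \zeta_*$. So in fact I would choose $\zeta_*\in(0,h)$ at the outset to also satisfy $c_f(2+h)\zeta_* \leq 1/2$ (i.e. $\zeta_* = \min\{h/2,\, 1/(2c_f(2+h))\}$); then $1-c_f(2+h)L \geq 1/2$ for every admissible $L$, and I can take
\begin{displaymath}
c_* = 2c_f\big(1+(2+h)(1+\beta)\big),
\end{displaymath}
which depends only on $\tau_*,z_*,w_*(\cdot)$ (through $\beta$) and the fixed data $c_f,h$, as required. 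With this $c_*$ the inequality $c_* \geq c_f(1+(2+h)\alpha_*)$ holds by construction, and combining it with the two displayed estimates above gives \cref{lem:c_star:statement}. The remaining verifications — that $\|r(\cdot)\|_1\leq h\|r(\cdot)\|_\infty$ and that $\min$/$\max$ over compacts do not increase the Lipschitz constant — are elementary and I would dispatch them in a line each.
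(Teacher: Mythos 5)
Your proof is correct and follows essentially the same route as the paper's: bound the Lipschitz constant of $H$ in $s$ by $\sup_{u,v}\|f\|$ via \cref{cond:f_sublinear_growth}, reduce to $c_f(1+(2+h)\alpha_*)\leq c_*$, and break the circular dependence between $c_*$ and $\alpha_*$ by choosing $\zeta_*$ small enough that the self-referential inequality is solvable (the paper uses the constants $\zeta_*=\min\{1/(2c_f(3+h)),h/2\}$ and $c_*=2c_f(3+h)(1+\max\{\|z_*\|,\|w_*(\cdot)\|_\infty\})$, absorbing the additive $1$ into $(3+h)\alpha_*$ via $\alpha_*\geq 1$, but this is only a cosmetic difference). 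No gaps.
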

\begin{proof}
Taking the constant $c_f$ from \cref{cond:f_sublinear_growth}, put
\begin{equation}\label{lem:c_star:zeta_star}
\zeta_* =  \min\{1 / (2 c_f (3 + h)),h/2\}.
\end{equation}
Let $(\tau_*,z_*,w_*(\cdot)) \in \mathbb G$, $\tau_* < \vartheta$, and $\nu_* \in (\tau_*,\min\{\tau_* + \zeta_*, \vartheta\}]$. Put
\begin{equation}\label{lem:c_star:c_star}
c_* = 2 c_f (3 + h) \big(1 +\max\{\|z_*\|,\|w_*(\cdot)\|_\infty\}\big).
\end{equation}
Let $t \in [\tau_*,\nu_*]$ and $(x,r(\cdot)) \in P(\alpha_*)$. Then, due to \cref{cond:f_sublinear_growth}, definition \cref{Hamiltonian} of $H$, and the choice of $\alpha_*$ in \cref{lem:zeta_star:condition}, we derive
\begin{displaymath}
\begin{array}{c}
|H(t,x,r(\cdot),s) - H(t,x,r(\cdot),s')| \leq c_f \big(1 + \|x\| + \|r(\cdot)\|_1 + \|r(-h)\|\big) \|s - s'\|\\[0.2cm]
\leq c_f (3 + h) \alpha_* \|s - s'\| \leq c_f (3 + h) (\zeta_* c_* + 1 + \max\{\|z_*\|,\|w_*(\cdot)\|_\infty\}) \|s - s'\|,\\[0.2cm]
\end{array}
\end{displaymath}
wherefrom, using \cref{lem:c_star:zeta_star,lem:c_star:c_star}, we obtain \cref{lem:c_star:statement}.
\end{proof}

Denote by $\mathrm{PC}_0$ the set of continuous functions $x(\cdot) \colon [-h,0) \mapsto \mathbb R^n$, which have a finite left limits at the point $0$. Then, due to definition \cref{PC_G} of $\mathrm{PC}$, we have $\mathrm{PC}_{0} \subset \mathrm{PC}$.

\begin{lemma}\label{lem:taus_zs_ws_nus}
Let $\Delta \varphi \in \Phi$ satisfy the relations
\begin{equation}\label{lem:taus_zs_ws_nus:condition}
\Delta \varphi \not\equiv 0,\quad \Delta \varphi(\vartheta,z,w(\cdot)) = 0,\quad (z,w(\cdot)) \in \mathbb R^n \times \mathrm{PC},
\end{equation}
where the first relation means that there exists a point $(\tau,z,w(\cdot)) \in \mathbb G$ such that $\Delta \varphi (\tau,z,w(\cdot)) \neq 0$.
Let $\zeta_* > 0$. Then there exist $(\tau_*,z_*,w_*(\cdot)) \in \mathbb G$, $\tau_* < \vartheta$, $w_*(\cdot) \in \mathrm{PC}_0$, and $\nu_* \in (\tau_*, \min\{\tau_* + \zeta_*, \vartheta\}]$, $\theta_* > 0$ such that
\begin{equation}\label{lem:taus_zs_ws_nus:statement}
|\Delta \varphi(\tau_*,z_*,w_*(\cdot))| > \theta_*,\quad \Delta\varphi(\nu_*,x,r(\cdot)) = 0,\quad (x,r(\cdot)) \in \mathbb R^n \times \mathrm{PC}.
\end{equation}
\end{lemma}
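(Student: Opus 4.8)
The plan is to single out the last time-slice on which $\Delta\varphi$ vanishes identically and to take $\tau_*$ just to the left of it. Let $\Sigma$ denote the set of all $t\in[t_0,\vartheta]$ for which $\Delta\varphi(t,x,r(\cdot))=0$ for every $(x,r(\cdot))\in\mathbb R^n\times\mathrm{PC}$. Since $\Delta\varphi\in\Phi$ is continuous with respect to $\tau$, for each fixed $(x,r(\cdot))$ the zero set of $t\mapsto\Delta\varphi(t,x,r(\cdot))$ is closed, so $\Sigma$ is closed as an intersection of closed sets; moreover $\vartheta\in\Sigma$ and $\Sigma\neq[t_0,\vartheta]$ by \cref{lem:taus_zs_ws_nus:condition}. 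I would set
\[
\nu_*:=\inf\{\,t\in[t_0,\vartheta]:[t,\vartheta]\subset\Sigma\,\},
\]
which is well defined because $\vartheta$ belongs to the set on the right.

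The first step is to check that $[\nu_*,\vartheta]\subset\Sigma$: every $t\in(\nu_*,\vartheta]$ lies in some $[t',\vartheta]\subset\Sigma$ with $t'<t$, and $\Delta\varphi(\nu_*,x,r(\cdot))=\lim_{t\to\nu_*+}\Delta\varphi(t,x,r(\cdot))=0$ by continuity in $\tau$. In particular the second relation in \cref{lem:taus_zs_ws_nus:statement} holds for $\nu_*$, and $\nu_*>t_0$ (otherwise $[t_0,\vartheta]\subset\Sigma$, i.e. $\Delta\varphi\equiv0$ on $\mathbb G$, contradicting \cref{lem:taus_zs_ws_nus:condition}). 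The second step is the accumulation property from the left: for every $t\in[t_0,\nu_*)$ the interval $[t,\vartheta]$ is not contained in $\Sigma$ (by the definition of $\nu_*$ as an infimum), and a witnessing point of $\Sigma^{c}$ in $[t,\vartheta]$ cannot lie in $[\nu_*,\vartheta]\subset\Sigma$, so $\Sigma^{c}\cap[t,\nu_*)\neq\emptyset$.

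Given $\zeta_*>0$, I would then pick $\tau_*\in\Sigma^{c}\cap[\max\{t_0,\nu_*-\zeta_*\},\nu_*)$, which is nonempty by the accumulation property; since $\tau_*<\nu_*\le\vartheta$ and $\tau_*\ge\nu_*-\zeta_*$, this gives $\tau_*<\vartheta$ and $\nu_*\in(\tau_*,\min\{\tau_*+\zeta_*,\vartheta\}]$. As $\tau_*\notin\Sigma$, there exist $z_*\in\mathbb R^n$ and $\hat w(\cdot)\in\mathrm{PC}$ with $\beta:=|\Delta\varphi(\tau_*,z_*,\hat w(\cdot))|>0$. It remains to replace $\hat w(\cdot)$ by a function in $\mathrm{PC}_0$ without destroying this inequality: put $\alpha:=1+\max\{\|z_*\|,\|\hat w(\cdot)\|_\infty\}$ and let $\lambda$ be the associated Lipschitz constant of $\Delta\varphi$ from \cref{phi_lipshiz_continuous}. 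Since $\hat w(\cdot)$ has only finitely many jumps in $(-h,0)$, replacing it on short intervals ending at each jump point by the affine interpolant between the values $\hat w(\cdot)$ already takes there produces $w_*(\cdot)\in\mathrm{PC}_0$ with $\|w_*(\cdot)\|_\infty\le\|\hat w(\cdot)\|_\infty$ (balls are convex) and with $\|w_*(\cdot)-\hat w(\cdot)\|_1$ as small as we wish; taking it smaller than $\beta/(2\lambda)$ and using $(z_*,w_*(\cdot)),(z_*,\hat w(\cdot))\in P(\alpha)$ together with \cref{phi_lipshiz_continuous} yields $|\Delta\varphi(\tau_*,z_*,w_*(\cdot))|>\beta/2=:\theta_*$, which is the first relation in \cref{lem:taus_zs_ws_nus:statement}.

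The delicate part is the first half — setting up $\Sigma$ and proving it is closed and that $\Sigma^{c}$ accumulates at $\nu_*$ from the left — because the only continuity one may exploit in the "$\tau$ alone" direction is that of $t\mapsto\Delta\varphi(t,x,r(\cdot))$ for each fixed $(x,r(\cdot))$; the $\mathrm{PC}_0$-approximation is routine, but one must be careful to preserve the $\|\cdot\|_\infty$-bound so that the Lipschitz estimate \cref{phi_lipshiz_continuous} applies at a single level $\alpha$.
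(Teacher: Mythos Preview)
Your argument is correct and follows essentially the same strategy as the paper's: both locate the threshold time $\nu_*$ to the right of which $\Delta\varphi$ vanishes identically, choose $\tau_*$ within $\zeta_*$ to its left, and then approximate the piecewise-continuous history by one in $\mathrm{PC}_0$ using the Lipschitz bound \cref{phi_lipshiz_continuous}. The only cosmetic difference is that the paper picks $\nu_*$ from a discrete grid of mesh $<\zeta_*$ (so closedness of $\Sigma$ is never needed), whereas you define $\nu_*$ as an infimum and use that $\Sigma$ is closed.
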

\begin{proof}
Let $k \in \mathbb N$ be such that $\Delta t = (\vartheta - t_0) / k < \zeta_*$. Denote $\nu_i = t_0 + i \Delta t$, $i \in \overline{0,k}$, and
\begin{displaymath}
I = \big\{i \in \overline{0,k} \colon \Delta \varphi(t,x,w(\cdot)) = 0,\, (t,x,w(\cdot)) \in [\nu_i, \vartheta] \times \mathbb R^n \times \mathrm{PC} \big\}.
\end{displaymath}
Due to \cref{lem:taus_zs_ws_nus:condition}, we have $I \neq \emptyset$ (at least $k \in I$) and $i_0 = \min I > 0$. Hence, there exists $(\tau_*,z_*,w'_*(\cdot)) \in [\nu_{i_0 - 1}, \nu_{i_0}) \times \mathbb R^n \times \mathrm{PC}$ such that $\Delta\varphi(\tau_*,z_*,w'_*(\cdot)) \neq 0$. Defining $\nu_* = \nu_{i_0}$, we obtain the second relation in \cref{lem:taus_zs_ws_nus:statement}.

Let $\theta_* = |\Delta\varphi(\tau_*,z_*,w'_*(\cdot))| / 3$. Then, due to the inclusion $\Delta \varphi \in \Phi$ (see \cref{phi_lipshiz_continuous}), for
$\alpha = \|w'_*(\cdot)\|_\infty$, there exists $\lambda_\varphi = \lambda_\varphi(\alpha) > 0$ such that
\begin{equation}\label{lem:taus_zs_ws_nus:lipshitz}
\begin{array}{c}
|\Delta \varphi(\tau_*,z_*,w(\cdot))| \geq 3 \theta_* - |\Delta \varphi(\tau_*,z_*,w'_*(\cdot)) - \Delta \varphi(\tau_*,z_*,w(\cdot))|\\[0.2cm]
> 2 \theta_* - \lambda_\varphi\|w'_*(\cdot) - w(\cdot)\|_1,\quad (z_*,w(\cdot)) \in P(\alpha).
\end{array}
\end{equation}
By the function $w'_*(\cdot) \in \mathrm{PC}$, one can find (see, e.g., \cite[p. 214]{Natanson_1960}) a function $w_*(\cdot) \in \mathrm{PC}_0$ such that $\|w_*(\cdot)\|_\infty \leq \alpha$ and $\|w'_*(\cdot) - w_*(\cdot)\|_1 \leq \theta_* / \lambda_\varphi$. Then, using \cref{lem:taus_zs_ws_nus:lipshitz} for $w(\cdot) = w_*(\cdot)$, we obtain the first relation in \cref{lem:taus_zs_ws_nus:statement}.
\end{proof}

For $(\tau,z,w(\cdot)) \in \mathbb G$, $\nu \in [\tau,\vartheta]$, $c > 0$, and $\beta \geq 0$, denote
\begin{equation}\label{Omega_Pi}
\hspace{-0.3cm}
\begin{array}{rcl}
\Omega(\tau,z,\nu,c,\beta) \!\!\!\!\! & = & \!\!\!\!\! \big\{(t,x) \in [\tau,\nu] \times \mathbb R^n \colon \|x - z\| \leq (t - \tau) c + \beta\big\},\\[0.2cm]
\Pi(\tau,z,w(\cdot),\nu,c) \!\!\!\!\! & = & \!\!\!\!\! \big\{y(\cdot) \in \mathrm{PC}([\tau-h,\vartheta], \mathbb R^n) \colon y(t) = w(t - \tau),\, t \in [\tau-h,\tau),\\[0.2cm]
&&\hspace{1.7cm} y(\tau) = z,\ (t,y(t)) \in \Omega(\tau,z,\nu,c,0),\, t \in [\tau,\nu]\big\}.
\end{array}
\end{equation}

\begin{lemma}\label{lem:tau_z_nu}
Let $\Delta \varphi \in \Phi$, $(\tau_*,z_*,w_*(\cdot)) \in \mathbb G$, $\tau_* < \vartheta$, $\nu_* \in (\tau_*, \vartheta]$, and $\theta_* > 0$ satisfy the relations
\begin{equation}\label{lem:tau_z_nu:condition}
\Delta \varphi(\tau_*,z_*,w_*(\cdot))  > \theta_*,\quad \Delta \varphi(\nu_*,x,r(\cdot)) = 0,\quad (x,r(\cdot)) \in \mathbb R^n \times \mathrm{PC}.
\end{equation}
Let $c_*, \zeta > 0$. Denote
\begin{displaymath}
\Omega_* = \Omega(\tau_*,z_*,\nu_*,c_*,0),\quad \Pi_* = \Pi(\tau_*,z_*,w_*(\cdot),\nu_*,c_*).
\end{displaymath}
Then there exist $(\tau,z) \in \Omega_*$, $y^*(\cdot) \in \Pi_*$, and $\nu \in (\tau,\min\{\tau + \zeta,\nu_*\}]$ such that, for every $x \in \mathbb R^n$ satisfying the inclusion $(\nu,x) \in \Omega(\tau,z,\nu,c_*,0)$, and for every $y(\cdot) \in \Pi(\tau,z,y^*_\tau(\cdot),\nu,c_*)$, the following inequality holds:
\begin{equation}\label{lem:tau_z_nu:statement}
\Delta\varphi(\tau,z,y^*_\tau(\cdot)) > \theta_* (\nu - \tau)/(\nu_* - \tau_*)  + \Delta\varphi(\nu,x,y_\nu(\cdot)).
\end{equation}
\end{lemma}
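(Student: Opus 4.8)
The plan is to obtain $(\tau,z)$, $y^*(\cdot)$ and $\nu$ by a finite \emph{greedy push-forward}: starting at $\tau_*$, I repeatedly try to stop at the current point; if the desired inequality \cref{lem:tau_z_nu:statement} fails there, the failure is used to move to a strictly later time while keeping the value of $\Delta\varphi$ at the current position above the ``fair share'' $\theta_*(\nu_* - \cdot)/(\nu_*-\tau_*)$ of decrease still available before $\nu_*$. Formally, I construct a finite sequence of triples $(\tau^j,z^j,\phi^j)$ with $(\tau^j,z^j)\in\Omega_*$ and $\phi^j = y^j_{\tau^j}(\cdot)$ for some $y^j(\cdot)\in\Pi_*$, starting from $\tau^0=\tau_*$, $z^0=z_*$, $\phi^0=w_*(\cdot)$ (with $y^0(\cdot)$ any element of $\Pi_*$, e.g.\ $z_*$ continued by a constant beyond $\tau_*$; note $y^0_{\tau_*}(\cdot)=w_*(\cdot)$), and maintaining the invariant
\[
\Delta\varphi\big(\tau^j,z^j,\phi^j\big) \;>\; \theta_*\,\frac{\nu_* - \tau^j}{\nu_* - \tau_*},
\]
which for $j=0$ is exactly the first relation in \cref{lem:tau_z_nu:condition}. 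Since $\Delta\varphi(\nu_*,\cdot,\cdot)=0$, this invariant forces $\tau^j<\nu_*$ at every stage.

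At stage $j$ put $\nu^j=\min\{\tau^j+\zeta,\nu_*\}\in(\tau^j,\nu_*]$. If \cref{lem:tau_z_nu:statement} holds with $\tau=\tau^j$, $z=z^j$, $y^*(\cdot)=y^j(\cdot)$, $\nu=\nu^j$, the lemma is proved. Otherwise, negating that universally quantified statement yields $x^{j+1}\in\mathbb R^n$ with $(\nu^j,x^{j+1})\in\Omega(\tau^j,z^j,\nu^j,c_*,0)$ and $\bar y^{j+1}(\cdot)\in\Pi(\tau^j,z^j,\phi^j,\nu^j,c_*)$ such that
\[
\Delta\varphi\big(\tau^j,z^j,\phi^j\big) \;\le\; \theta_*\,\frac{\nu^j-\tau^j}{\nu_*-\tau_*} + \Delta\varphi\big(\nu^j,x^{j+1},\bar y^{j+1}_{\nu^j}(\cdot)\big).
\]
I then set $\tau^{j+1}=\nu^j$, $z^{j+1}=x^{j+1}$, $\phi^{j+1}=\bar y^{j+1}_{\nu^j}(\cdot)$, and let $y^{j+1}(\cdot)$ equal $y^j(\cdot)$ on $[\tau_*-h,\tau^j)$, equal $\bar y^{j+1}(\cdot)$ on $[\tau^j,\nu^j)$, and equal $x^{j+1}$ on $[\nu^j,\vartheta]$. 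Two routine verifications are needed. First, the cone defining $\Omega$ is sub-additive in time: from $\|z^j-z_*\|\le(\tau^j-\tau_*)c_*$ and the membership of $\bar y^{j+1}(\cdot)$ in the inner tube one gets $\|y^{j+1}(t)-z_*\|\le (t-\tau_*)c_*$ for $t\in[\tau^j,\nu^j]$ and in particular $(\nu^j,x^{j+1})\in\Omega_*$; hence $y^{j+1}(\cdot)\in\Pi_*$, $(\tau^{j+1},z^{j+1})\in\Omega_*$, and $\phi^{j+1}=y^{j+1}_{\tau^{j+1}}(\cdot)$ (here one uses that $\bar y^{j+1}(\cdot)$ and $y^j(\cdot)$ coincide on $[\tau^j-h,\tau^j)$, because $\phi^j=y^j_{\tau^j}(\cdot)$). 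Second, combining the displayed inequality with the invariant at stage $j$ gives
\[
\Delta\varphi\big(\tau^{j+1},z^{j+1},\phi^{j+1}\big) \;\ge\; \Delta\varphi\big(\tau^j,z^j,\phi^j\big) - \theta_*\,\frac{\nu^j-\tau^j}{\nu_*-\tau_*} \;>\; \theta_*\,\frac{\nu_* - \tau^{j+1}}{\nu_* - \tau_*},
\]
so the invariant is preserved.

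Finally, I argue the procedure terminates, which proves the lemma. If at some stage $j$ the second alternative occurs with $\tau^j+\zeta\ge\nu_*$, then $\tau^{j+1}=\nu^j=\nu_*$ and the preserved invariant reads $\Delta\varphi(\nu_*,z^{j+1},\phi^{j+1})>0$, contradicting the second relation in \cref{lem:tau_z_nu:condition}. Hence at every non-terminating stage $\tau^{j+1}=\tau^j+\zeta$, so $\tau^j=\tau_*+j\zeta$ as long as the process runs, and it must stop after at most $\lceil(\nu_*-\tau_*)/\zeta\rceil$ steps; the terminating stage delivers the required $(\tau,z)\in\Omega_*$, $y^*(\cdot)\in\Pi_*$, and $\nu\in(\tau,\min\{\tau+\zeta,\nu_*\}]$. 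I expect the only delicate point to be the bookkeeping in the first verification above --- checking that the spliced trajectory $y^{j+1}(\cdot)$ genuinely lies in $\Pi_*$ and realizes the history $\phi^{j+1}$ at time $\tau^{j+1}$ --- but this reduces to the triangle inequality for the cones $\Omega(\cdot)$ together with the agreement of $y^j(\cdot)$ and $\bar y^{j+1}(\cdot)$ on $[\tau^j-h,\tau^j)$. Note that the Lipschitz/continuity properties packaged in $\Delta\varphi\in\Phi$ are not actually needed for this argument; the whole proof is the above pigeonhole-type iteration.
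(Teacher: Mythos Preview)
Your proof is correct and essentially identical to the paper's: both argue by a finite iteration on $[\tau_*,\nu_*]$ maintaining the invariant $\Delta\varphi(\tau^j,z^j,\phi^j) > \theta_*(\nu_*-\tau^j)/(\nu_*-\tau_*)$, splicing trajectories in $\Pi_*$ via the sub-additivity of the cones $\Omega(\cdot)$, and reaching a contradiction with $\Delta\varphi(\nu_*,\cdot,\cdot)=0$ when the iterate hits $\nu_*$. The only cosmetic difference is that the paper frames it as a proof by contradiction on a uniform partition with mesh $\Delta\nu<\zeta$, whereas you run a direct greedy construction with step $\zeta$; the substance is the same.
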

\begin{proof}
Aiming for a contradiction, suppose that, for each $(\tau,z) \in \Omega_*$, $y^*(\cdot) \in \Pi_*$, and $\nu \in (\tau,\min\{\tau + \zeta,\nu_*\}]$, there exist $x \in \mathbb R^n$ and $y(\cdot) \in \Pi(\tau,z,y^*_\tau(\cdot),\nu,c_*)$ such that
\begin{equation}\label{lem:tau_z_nu:contradiction}
(\nu, x) \in \Omega(\tau,z,\nu,c_*,0),\ \
\Delta\varphi(\tau, z, y^*_\tau(\cdot)) \leq \theta_* (\nu - \tau)/(\nu_* - \tau_*) + \Delta\varphi(\nu, x, y_\nu(\cdot)).
\end{equation}
Let $k \in \mathbb N$ be such that $\Delta \nu = (\nu_* - \tau_*) / k < \zeta$. Denote $\nu_i = \tau_* + i \Delta \nu$, $i \in \overline{0,k}$. Using mathematical induction, for each $i \in \overline{0,k}$, we will find $x_i \in \mathbb R^n$ and $y^i(\cdot) \in \Pi_*$, satisfying the relations below:
\begin{equation}\label{lem:tau_z_nu:induction}
(\nu_i,x_i) \in \Omega_*,\quad \theta_* (\nu_* - \nu_i) / (\nu_* - \tau_*) < \Delta\varphi(\nu_i,x_i,y^i_{\nu_i}(\cdot)).
\end{equation}
For $i = 0$, these relations follow from the first relation in \cref{lem:tau_z_nu:condition} taking $x_0 = z_*$ and an arbitrary $y^0(\cdot) \in \Pi_*$. Assume that \cref{lem:tau_z_nu:induction} holds for $i = l$. Due to \cref{lem:tau_z_nu:contradiction}, there exist $x_{l+1} \in \mathbb R^n$ and $\overline{y}^{\,l+1}(\cdot) \in \Pi(\nu_l,x_l,y^l_{\nu_l}(\cdot),\nu_{l+1},c_*)$ such that
\begin{equation}\label{lem:tau_z_nu:contradiction_l}
\begin{array}{c}
(\nu_{l+1},x_{l+1}) \in \Omega(\nu_l,x_l,\nu_{l+1},c_*,0),\\[0.2cm]
\Delta\varphi(\nu_l,x_l,y^l_{\nu_l}(\cdot)) \leq \theta_* (\nu_{l+1} - \nu_l)/(\nu_* - \tau_*) + \Delta\varphi(\nu_{l+1},x_{l+1},\overline{y}^{\,l+1}_{\nu_{l+1}}(\cdot)).
\end{array}
\end{equation}
Define $y^{l+1}(\cdot)$ by $y^{l+1}(t) = y^l(t)$ for $t \in [\tau_*,\tau_l)$, and $y^{l+1}(t) = \overline{y}^{\,l+1}(t)$ for $t \in [\tau_l,\vartheta]$. Then we have $y^{l+1}(\cdot) \in \Pi_*$. Hence, using \cref{lem:tau_z_nu:induction} for $i = l$ and \cref{lem:tau_z_nu:contradiction_l}, we obtain \cref{lem:tau_z_nu:induction} for $i = l + 1$. It means that \cref{lem:tau_z_nu:induction} is valid for any $i \in \overline{0,k}$, but \cref{lem:tau_z_nu:induction} for $i = k$ contradicts the second relation in \cref{lem:tau_z_nu:condition}.
\end{proof}

\begin{lemma}\label{lem:Omega}
Let the Hamiltonian $H$ be defined by \cref{Hamiltonian}. Let $\varphi_1, \varphi_2 \in \Phi$ be different functionals satisfying terminal condition \cref{terminal_condition}.
Then there exist $(\tau,z,w(\cdot)) \in \mathbb G$, $\tau < \vartheta$, $\nu \in (\tau,\vartheta]$, and $c_*, \beta, \theta, \alpha_\varphi, \lambda_\varphi > 0$ such that the functions
\begin{equation}\label{lem:Omega:definitions}
\begin{array}{c}
\tilde{H}(t,x,s) = H(t,x,\kappa_t(\cdot),s),\quad \tilde{\varphi}_i(t,x) = \varphi_i(t,x,\kappa_t(\cdot)),\quad i = 1,2,\\[0.2cm]
(t,x) \in \Omega^\beta = \Omega(\tau,z,\nu,c_*,\beta),\quad \kappa(\cdot) \in \Lambda_0(\tau,z,w(\cdot)),\quad s\in \mathbb R^n,
\end{array}
\end{equation}
are continuous, the Hamiltonian $H$ satisfies the estimate
\begin{equation}\label{lem:Omega:statement_1}
|H(t,x,\kappa_t(\cdot),s) - H(t,x,\kappa_t(\cdot),s')| \leq c_* \|s - s'\|,\quad (t,x) \in \Omega^\beta,\quad s,s' \in \mathbb R^n,
\end{equation}
and the functionals $\varphi_i$, $i = 1,2$, satisfy the inequalities
\begin{equation}\label{lem:Omega:statement_2}
\begin{array}{c}
|\varphi_i(t,x,\kappa_t(\cdot))| \leq \alpha_\varphi,\quad |\varphi_i(t,x,\kappa_t(\cdot)) - \varphi_i(t,x',\kappa_t(\cdot))| \leq \lambda_\varphi \|x - x'\|,\\[0.2cm]
(t,x),(t,x') \in \Omega^\beta,
\end{array}
\end{equation}
the estimate
\begin{equation}\label{lem:Omega:statement_3}
\begin{array}{c}
|\varphi_i(t,x,\kappa_t(\cdot)) - \varphi_i(t,x,\kappa'_t(\cdot))| \leq (t - t') \theta,\\[0.2cm]
(t,x) \in ([t',\nu] \times \mathbb R^n) \cap \Omega^\beta,\quad \kappa'(\cdot) \in \Lambda_0(t',x',\kappa_{t'}(\cdot)),\quad (t',x') \in \Omega^\beta,
\end{array}
\end{equation}
and the inequality
\begin{equation}\label{lem:Omega:statement_4}
\begin{array}{c}
\varphi_1(\tau,z,w(\cdot)) - \varphi_2(\tau,z,w(\cdot)) \\[0.2cm]
> 4 (\nu - \tau) \theta + \varphi_1(\nu,x,\kappa_\nu(\cdot)) - \varphi_2(\nu,x,\kappa_\nu(\cdot)),
\end{array}
\quad (\nu,x) \in \Omega^\beta.
\end{equation}
\end{lemma}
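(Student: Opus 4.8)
The plan is to reduce the statement, via the three preceding lemmas of this section, to the choice of a sufficiently small space–time tube around a point where the difference $\Delta\varphi := \varphi_1 - \varphi_2$ fails to vanish. Note first that $\Delta\varphi \in \Phi$ (a difference of elements of $\Phi$), that $\Delta\varphi \not\equiv 0$ since $\varphi_1 \neq \varphi_2$, and that $\Delta\varphi(\vartheta,\cdot,\cdot) \equiv 0$ by terminal condition \cref{terminal_condition}. I would fix $\zeta_* \in (0,h)$ as in \cref{lem:zeta_star} and apply \cref{lem:taus_zs_ws_nus} to $\Delta\varphi$ with this $\zeta_*$, obtaining $(\tau_*,z_*,w_*(\cdot)) \in \mathbb G$ with $w_*(\cdot) \in \mathrm{PC}_0$, $\tau_* < \vartheta$, $\nu_* \in (\tau_*,\min\{\tau_* + \zeta_*,\vartheta\}]$, and $\theta_* > 0$ with $|\Delta\varphi(\tau_*,z_*,w_*(\cdot))| > \theta_*$ and $\Delta\varphi(\nu_*,\cdot,\cdot) \equiv 0$; interchanging $\varphi_1$ and $\varphi_2$ if necessary, I may arrange $\Delta\varphi(\tau_*,z_*,w_*(\cdot)) > \theta_*$. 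Applying \cref{lem:zeta_star} once more, now to this $(\tau_*,z_*,w_*(\cdot))$ and $\nu_*$, produces $c_* > 0$ for which $H$ is $c_*$-Lipschitz in its last argument on $[\tau_*,\nu_*] \times P(\alpha_*)$, where $\alpha_* = (\nu_* - \tau_*)c_* + 1 + \max\{\|z_*\|,\|w_*(\cdot)\|_\infty\}$. I would then freeze $\lambda_\varphi > 0$ to be a common local Lipschitz constant of $\varphi_1,\varphi_2$ on $P(\alpha_*)$ and set $\theta = \theta_*/(8(\nu_* - \tau_*))$.

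Next I would pick a small $\zeta > 0$ with $\lambda_\varphi c_* \zeta \leq \theta/2$ and invoke \cref{lem:tau_z_nu} with $\Delta\varphi$, $(\tau_*,z_*,w_*(\cdot))$, $\nu_*$, $\theta_*$, $c_*$, $\zeta$, obtaining $(\tau,z) \in \Omega_*$, $y^*(\cdot) \in \Pi_*$ and $\nu \in (\tau,\min\{\tau+\zeta,\nu_*\}]$ (in particular $0 < \nu - \tau \leq \zeta$) such that
\[
\Delta\varphi(\tau,z,y^*_\tau(\cdot)) > \frac{\theta_*(\nu - \tau)}{\nu_* - \tau_*} + \Delta\varphi(\nu,x,y_\nu(\cdot))
\]
for every $x$ with $(\nu,x) \in \Omega(\tau,z,\nu,c_*,0)$ and every $y(\cdot) \in \Pi(\tau,z,y^*_\tau(\cdot),\nu,c_*)$. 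I set $w(\cdot) = y^*_\tau(\cdot)$ and fix $\kappa(\cdot) \in \Lambda_0(\tau,z,w(\cdot))$; the restriction of $\kappa(\cdot)$ to $[\tau-h,\vartheta]$ lies in $\Pi(\tau,z,w(\cdot),\nu,c_*)$ (it is constantly $z$ on $[\tau,\nu]$), so the displayed inequality holds with $y = \kappa$. Finally, since $\nu - \tau > 0$ is by now a fixed number, I would choose $\beta \in (0,1]$ with $\lambda_\varphi\beta \leq \theta/2$ and $2\lambda_\varphi\beta < \theta_*(\nu-\tau)/(2(\nu_* - \tau_*))$, and set $\alpha_\varphi = \max_{i=1,2}\max_{(t,x)\in\Omega^\beta}|\varphi_i(t,x,\kappa_t(\cdot))|$, which is finite because $\Omega^\beta$ is compact and $(t,x)\mapsto\varphi_i(t,x,\kappa_t(\cdot))$ is continuous by \cref{lem:tilde_phi}.

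It then remains to check that $(\tau,z,w(\cdot))$, $\nu$ and $c_*,\beta,\theta,\alpha_\varphi,\lambda_\varphi$ meet all the requirements, which I expect to be routine. By the choices of $\alpha_*,\zeta,\beta$ one has $(x,\kappa_t(\cdot)) \in P(\alpha_*)$ for every $(t,x) \in \Omega^\beta$; hence \cref{lem:Omega:statement_1} is exactly the conclusion \cref{lem:c_star:statement} of \cref{lem:zeta_star}, and \cref{lem:Omega:statement_2} follows from $\varphi_i \in \Phi$. Continuity of the $\tilde{\varphi}_i$ on $\Omega^\beta$ is \cref{lem:tilde_phi}; continuity of $\tilde{H}$ follows from \cref{lem:tilde_g} together with the standard fact that $\min$--$\max$ over the compacts $\mathbb U,\mathbb V$ preserves continuity, once one notes that $w(\cdot) = y^*_\tau(\cdot)$ coincides near $-h$ with the continuous function $w_*(\cdot)$ (because $\tau - \tau_* \leq \zeta_* < h$), so $w(\cdot)$ is continuous on some $[-h,-h+\delta_w]$ with $\delta_w \geq \nu - \tau$. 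For \cref{lem:Omega:statement_3} one observes that $\kappa_t(\cdot)$ and $\kappa'_t(\cdot)$ differ only on an interval of length $\min\{t-t',h\}$, on which their difference is $z - x'$, so $\|\kappa_t(\cdot)-\kappa'_t(\cdot)\|_1 \leq (t-t')\|z-x'\| \leq (t-t')(c_*\zeta+\beta)$, and then $\lambda_\varphi(c_*\zeta+\beta) \leq \theta$ gives the bound. For \cref{lem:Omega:statement_4}, given $(\nu,x)\in\Omega^\beta$ I choose $x'$ with $(\nu,x')\in\Omega(\tau,z,\nu,c_*,0)$ and $\|x-x'\|\leq\beta$, apply the displayed inequality with $y=\kappa$ at $(\nu,x')$, and absorb $|\Delta\varphi(\nu,x,\kappa_\nu(\cdot))-\Delta\varphi(\nu,x',\kappa_\nu(\cdot))|\leq 2\lambda_\varphi\beta$; the choices of $\theta$ and $\beta$ make $\theta_*(\nu-\tau)/(\nu_*-\tau_*)-2\lambda_\varphi\beta$ strictly exceed $4(\nu-\tau)\theta$, which yields the strict inequality. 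The main obstacle here is purely organizational: one must fix the constants in the right order --- $\zeta_*$, then $c_*$ and $\lambda_\varphi$ and $\theta$, then $\zeta$ (which pins down $\nu-\tau$), and only then $\beta$ --- so that $\beta$ can be taken small relative to the already-determined $\nu-\tau$ in \cref{lem:Omega:statement_4}, while at the same time verifying that the localized history $y^*_\tau(\cdot)$ inherits enough continuity near $-h$ to make \cref{lem:tilde_g} applicable.
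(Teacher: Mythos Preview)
Your proposal is correct and follows essentially the same route as the paper's proof: define $\Delta\varphi=\varphi_1-\varphi_2$, invoke \cref{lem:zeta_star}, \cref{lem:taus_zs_ws_nus}, and \cref{lem:tau_z_nu} in that order to produce $(\tau_*,z_*,w_*(\cdot))$, $\nu_*$, $\theta_*$, $c_*$, then $(\tau,z,y^*(\cdot))$, $\nu$, set $w(\cdot)=y^*_\tau(\cdot)$ and $\kappa\in\Lambda_0(\tau,z,w(\cdot))$, and finally choose $\beta$ small relative to $\nu-\tau$. The only differences are cosmetic (the paper takes $\theta=\theta_*/(6(\nu_*-\tau_*))$, $\zeta=\min\{h+\tau_*-\nu_*,\theta/((c_*+1)\lambda_\varphi)\}$, and $\beta=\min\{1,\zeta,(\nu-\tau)\zeta\}$, whereas you split the bookkeeping slightly differently), and your justification for the continuity of $w(\cdot)$ near $-h$ should cite $\nu-\tau_*\le\zeta_*<h$ rather than $\tau-\tau_*\le\zeta_*<h$, but the argument is the same.
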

\begin{proof}
Define the functional
\begin{equation}\label{lem:Omega:Delta_phi}
\Delta \varphi(\tau,z,w(\cdot)) = \varphi_1(\tau,z,w(\cdot)) - \varphi_2(\tau,z,w(\cdot)),\quad (\tau,z,w(\cdot)) \in \mathbb G.
\end{equation}
Then this functional satisfies the inclusion $\Delta \varphi \in \Phi$ (see \cref{phi_lipshiz_continuous}) and relations \cref{lem:taus_zs_ws_nus:condition}.
Let us take $\zeta_* \in (0, h)$ according to \cref{lem:zeta_star}. Due to \cref{lem:taus_zs_ws_nus}, there exist $(\tau_*,z_*,w_*(\cdot)) \in \mathbb G$, $\tau_* < \vartheta$, $w_*(\cdot) \in \mathrm{PC}_0$, $\nu_* \in (\tau_*, \min\{\tau_* + \zeta_*, \vartheta\}]$, and $\theta_* > 0$ such that
relations \cref{lem:taus_zs_ws_nus:statement} hold. Then, without loss of generality, we can suppose that \cref{lem:tau_z_nu:condition} holds. By \cref{lem:zeta_star}, there exists $c_* > 0$ such that \cref{lem:c_star:statement} is valid. Define $\alpha_*$ in accordance with \cref{lem:zeta_star:condition}. Then, due to the inclusion $\varphi_1,\varphi_2 \in \Phi$ (see \cref{phi_lipshiz_continuous}), there exists $\lambda_\varphi = \lambda_\varphi(\alpha_*)$ such that
\begin{equation}\label{lem:Omega:lambda_phi}
|\varphi_i(t,x,r(\cdot)) - \varphi_i(t,x',r'(\cdot))| \leq \lambda_\varphi\big(\|x - x'\| + \|r(\cdot) - r'(\cdot)\|_1\big)
\end{equation}
for any $t \in [t_0,\vartheta]$ and $(x,r(\cdot)), (x',r'(\cdot)) \in P(\alpha_*)$. Put
\begin{equation}\label{lem:Omega:theta_zeta}
\theta = \theta_* / (6 (\nu_* - \tau_*)),\quad \zeta = \min\{h + \tau_* - \nu_*, \theta / ((c_* + 1) \lambda_\varphi)\}.
\end{equation}
In accordance with \cref{lem:tau_z_nu}, let us choose $(\tau,z) \in \Omega(\tau_*,z_*,\nu_*,c_*,0)$, $y^*(\cdot) \in \Pi(\tau_*,z_*,w_*(\cdot),\nu_*,c_*)$, and $\nu \in (\tau,\min\{\tau + \zeta,\nu_*\}]$ such that \cref{lem:tau_z_nu:statement} holds. Put
\begin{equation}\label{lem:Omega:w_beta}
w(\cdot) = y^*_\tau(\cdot),\quad \beta = \min\{1,\zeta,(\nu - \tau) \zeta\}.
\end{equation}
Note that thus we have
\begin{equation}\label{lem:Omega:times}
-h < -h + \zeta \leq \tau_* - \nu_* < \tau_* - \tau \leq 0,\quad \nu \leq \tau + \zeta,
\end{equation}
and
\begin{equation}\label{lem:Omega:w}
w(\xi) =
\left\{
\begin{array}{ll}
y^*(\tau + \xi) \in \Omega(\tau_*,z_*,\nu_*,c_*,0),& \xi \in [\tau_* - \tau, 0],\\[0.2cm]
w_*(\tau - \tau_* + \xi),& \xi \in [- h, \tau_* - \tau).
\end{array}
\right.
\end{equation}
In accordance with \cref{lem:Omega:definitions}, define $\kappa(\cdot)$, $\Omega^\beta$, $\tilde{H}$, and $\tilde{\varphi}_i$.

Due to the inclusion $w_*(\cdot) \in \mathrm{PC}_0$, taking into account \cref{lem:Omega:times,lem:Omega:w}, the function $w(\cdot)$ is continuous on $[-h, -h + \zeta]$. Then, using \cref{lem:tilde_g} for $\delta_w = \zeta$, definition \cref{Hamiltonian} of $H$, and the second relation in \cref{lem:Omega:times}, we obtain continuity of $\tilde{H}$.

Since $(\tau,z) \in \Omega(\tau_*,z_*,\nu_*,c_*,0)$, using the choice of $\alpha_*$ and $\beta$, we have
\begin{equation}\label{lem:Omega:alpha_star}
\begin{array}{c}
\|x\| \leq (t - \tau) c_* + \beta + \|z\| \leq (t - \tau_*) c_* + 1 + \|z_*\| \leq \alpha_*,\\[0.2cm]
\|\kappa_t(\cdot)\|_\infty \! \leq \max\{\|z\|,\|w(\cdot)\|_\infty\}
\leq \max\{(\tau - \tau_*) c_* + \|z_*\|, \|w_*(\cdot)\|_\infty\} \leq \alpha_*
\end{array}
\end{equation}
for any $(t,x) \in \Omega^\beta$. Hence (see \cref{P}), we derive
\begin{equation}\label{lem:Omega:P}
(x,\kappa_t(\cdot)) \in P(\alpha_*),\quad (t,x) \in \Omega^\beta.
\end{equation}
Then, by the choice of $c_*$, we obtain \cref{lem:Omega:statement_1}.

Due to the inclusion $\varphi_1,\varphi_2 \in \Phi$ and \cref{lem:tilde_phi}, the functions $\tilde{\varphi}_1$ and $\tilde{\varphi}_2$ are continuous. Then, defining $\alpha_\varphi = \max\{|\varphi_i(t,x)|\,|\, (t,x) \in\Omega^\beta, i = 1,2\}$, we get the first inequality in \cref{lem:Omega:statement_2}. The second inequality in \cref{lem:Omega:statement_2} follows from \cref{lem:Omega:lambda_phi,lem:Omega:P}.

Let $(t',x') \in \Omega^\beta$, $\kappa'(\cdot) \in \Lambda_0(t',x',\kappa_{t'}(\cdot))$, and $(t,x) \in ([t',\nu] \times \mathbb R^n) \cap \Omega^\beta$. Then, using \cref{lem:Omega:w_beta,lem:Omega:times}, we have $\|x' - z\| \leq (t' - \tau) c_* + \beta \leq (\nu - \tau) c_* + \zeta \leq (c_* + 1) \zeta$, and, taking into account \cref{lem:Omega:theta_zeta}, we derive
\begin{equation}\label{lem:Omega:kappa_kappa}
\|\kappa_t(\cdot) - \kappa'_t(\cdot)\|_1 = (t - t') \|z - x'\| \leq (t - t') (c_* + 1) \zeta \leq (t - t') \theta / \lambda_\varphi.
\end{equation}
Due to \cref{lem:Omega:alpha_star}, we have $\|x\| \leq \alpha_*$ and $\|\kappa'_t(\cdot)\|_\infty\! \leq \max\{\|x'\|,\|\kappa_{t'}(\cdot)\|_\infty\}\! \leq \alpha_*$, that means the inclusion $(x,\kappa'_t(\cdot)) \in P(\alpha_*)$. Then, from \cref{lem:Omega:lambda_phi,lem:Omega:kappa_kappa}, we obtain \cref{lem:Omega:statement_3}.

From \cref{lem:Omega:statement_2,lem:Omega:Delta_phi,lem:Omega:w_beta,lem:Omega:theta_zeta}, we derive
\begin{displaymath}
\begin{array}{c}
\min\limits_{(\nu,x') \in \Omega(\tau,z,\nu,c_*,0)} |\Delta\varphi(\nu,x,\kappa_\nu(\cdot)) - \Delta\varphi(\nu,x',\kappa_\nu(\cdot))|\\[0.2cm]
\leq 2 \lambda_\varphi \min\limits_{(\nu,x') \in \Omega(\tau,z,\nu,c_*,0)} \|x - x'\|\leq 2 \lambda_\varphi \beta \leq 2 \lambda_\varphi (\nu - \tau) \zeta \leq 2 (\nu - \tau) \theta,
\end{array}
\quad (\nu,x) \in \Omega^\beta,
\end{displaymath}
wherefrom, using \cref{lem:tau_z_nu:statement,lem:Omega:theta_zeta}, we obtain \cref{lem:Omega:statement_4}.
\end{proof}

\begin{lemma}\label{lem:eta}
Let $(\tau,z,w(\cdot)) \in \mathbb G$, $\tau < \vartheta$, $\nu \in (\tau,\vartheta]$, and $c_*, \beta, \theta, \alpha_\varphi > 0$. Let $\Omega^\beta$ be defined by \cref{lem:Omega:definitions}. Then there exists $\eta \in \mathrm{C}^1(\mathbb R \times \mathbb R^n, \mathbb R)$ satisfying the relations
\begin{equation}\label{lem:eta:statement_1}
\begin{array}{c}
\eta(\tau,z) = 0,\quad \eta(t,x) \geq 0,\quad (t,x) \in \Omega^\beta,\\[0.2cm]
\eta(t,x) \geq 2 \alpha_\varphi + 4 (\nu - \tau) \theta,\quad (t,x) \in \partial\Omega^\beta,
\end{array}
\end{equation}
where $\partial\Omega^\beta = \{(t,x) \in \Omega^\beta \colon \|x - z\| = (t - \tau) c_* + \beta\}$, and the inequality
\begin{equation}\label{lem:eta:statement_2}
\|\nabla_x \eta(t,x)\| \leq - (1 / c_*) \partial \eta(t,x) / \partial t,\quad (t,x) \in \Omega^\beta.
\end{equation}
Moreover, for this function, there exists $\lambda_\eta > 0$ such that
\begin{equation}\label{lem:eta:statement_3}
|\eta(t,x) - \eta(t,x)| \leq \lambda_\eta \|x - x'\|,\quad (t,x),(t,x') \in \Omega^\beta.
\end{equation}
\end{lemma}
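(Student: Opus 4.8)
The plan is to build $\eta$ by hand as a smooth, nondecreasing function of a single scalar that, at each point $(t,x)$, records the position of $x$ relative to the lateral boundary of the truncated cone $\Omega^\beta$. Abbreviate $M=2\alpha_\varphi+4(\nu-\tau)\theta$ and $R(t)=(t-\tau)c_*+\beta$, so that $\Omega^\beta=\{(t,x):t\in[\tau,\nu],\ \|x-z\|\le R(t)\}$ and $\partial\Omega^\beta=\{(t,x)\in\Omega^\beta:\|x-z\|=R(t)\}$. The most obvious candidate, $\eta(t,x)=F(\|x-z\|-R(t))$ for a suitable one-variable profile $F$, already has every property required on $\Omega^\beta$, so the only real obstacle is global $C^1$ regularity: $\|x-z\|$ is not differentiable along the cone axis $\{x=z\}$, and this defect cannot be hidden by tuning $F$ alone (it would persist at all $(t,z)$ with $t<\tau$). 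I would remove it by replacing $\|x-z\|$ with a $C^1$ radial function that is flat at $z$, has gradient norm at most $1$, and coincides with $\|x-z\|$ up to an additive constant away from $z$.

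Concretely, first I would fix $\phi\in C^1([0,\infty);[0,\infty))$ with $\phi(0)=\phi'(0)=0$, $0\le\phi'\le1$, and $\phi'(s)=1$ for $s\ge\beta/4$, and set $\psi(x)=\phi(\|x-z\|)$. Then $\psi\in C^1(\mathbb R^n)$, $\psi\ge0$, $\psi(z)=0$, $\|\nabla\psi\|\le1$, $\psi(x)\le\|x-z\|$, and $\psi(x)=\|x-z\|-c_0$ on $\{\|x-z\|\ge\beta/4\}$, where $c_0=\beta/4-\phi(\beta/4)\in(0,\beta/4)$. Put $d(t,x)=\psi(x)-R(t)$; a short computation gives $d(\tau,z)=-\beta$, $d\equiv-c_0$ on $\partial\Omega^\beta$ (because there $\|x-z\|=R(t)\ge\beta>\beta/4$), and $d\le-c_0$ throughout $\Omega^\beta$. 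Next I would choose $F\in C^1(\mathbb R)$ nondecreasing with $F\equiv0$ on $(-\infty,-\beta]$, $F\equiv M$ on $[-c_0,\infty)$, and $0\le F'\le\lambda_\eta$ for some $\lambda_\eta>0$ (possible since $-\beta<-c_0$), and finally define $\eta(t,x)=F(\psi(x)-R(t))$.

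Verifying the claims is then routine. Since $F\in C^1(\mathbb R)$, $\psi\in C^1(\mathbb R^n)$, and $R$ is affine, $\eta\in C^1(\mathbb R\times\mathbb R^n)$. Because $F\ge0$, $\eta\ge0$ everywhere; $\eta(\tau,z)=F(-\beta)=0$; and on $\partial\Omega^\beta$, $\eta=F(-c_0)=M$, giving \cref{lem:eta:statement_1}. Differentiating, $\nabla_x\eta=F'(d)\nabla\psi$ and $\partial\eta/\partial t=-c_*F'(d)$, so $F'\ge0$ together with $\|\nabla\psi\|\le1$ yields
\[
\|\nabla_x\eta(t,x)\|=F'(d)\,\|\nabla\psi(x)\|\le F'(d)=-\frac1{c_*}\,\frac{\partial\eta(t,x)}{\partial t},
\]
which is \cref{lem:eta:statement_2}. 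Finally $\|\nabla_x\eta\|\le\lambda_\eta$ everywhere and each slice $\Omega^\beta\cap(\{t\}\times\mathbb R^n)$ is a ball, hence convex, so the mean value theorem gives \cref{lem:eta:statement_3} with this $\lambda_\eta$. As flagged, the single delicate point is the construction of $\psi$: it must be $C^1$ at $z$, satisfy $\|\nabla\psi\|\le1$ (otherwise \cref{lem:eta:statement_2} fails), and equal $\|x-z\|$ up to a constant on $\{\|x-z\|\ge\beta/4\}$ so that $d$ — and hence $\eta$ — is exactly $-c_0$ (resp. $M$) on all of $\partial\Omega^\beta$; the profile $\phi$ above is tailored to meet all three requirements simultaneously, and everything else is a direct computation.
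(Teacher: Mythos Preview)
Your construction is correct and follows essentially the same strategy as the paper's: write $\eta$ as a nondecreasing $C^1$ one-variable profile composed with a scalar of the form (smoothed radial distance to $z$) $-(t-\tau)c_*$, so that $\|\nabla_x\eta\|\le -\tfrac{1}{c_*}\,\partial_t\eta$ is automatic and the boundary/vertex values are set by the profile. The only difference is the regularization of $\|x-z\|$: the paper simply takes $\tilde\eta(t,x)=\sqrt{\|x-z\|^2+\beta^2/4}-(t-\tau)c_*$ (so $\tilde\eta(\tau,z)=\beta/2$ and $\tilde\eta\ge\beta$ on $\partial\Omega^\beta$) and composes with a $C^1$ function $\mu$ satisfying $\mu\ge0$, $\mu'\ge0$, $\mu(\beta/2)=0$, $\mu(\beta)=2\alpha_\varphi+4(\nu-\tau)\theta$, which avoids your intermediate $\phi$ altogether; the two constructions are interchangeable, the paper's being a bit more compact and yours a bit more modular.
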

\begin{proof}
Let $\mu(\cdot) \colon \mathbb R \mapsto \mathbb R$ be a continuously differentiable function such that $\mu(l) \geq 0$, $\dot{\mu}(l) \geq 0$, $l \in \mathbb R$, and $\mu(\beta/2) = 0$, $\mu(\beta) = 2 \alpha_\varphi + 4 (\nu - \tau) \theta$. Put
\begin{displaymath}
\eta(t,x) = \mu(\tilde{\eta}(t,x)),\quad \tilde{\eta}(t,x) = \sqrt{\|x - z\|^2 + \beta^2 / 4} - (t - \tau) c_*,\quad (t,x) \in \mathbb R \times \mathbb R^n.
\end{displaymath}
Then this function satisfies the inclusion $\eta \in \mathrm{C}^1(\mathbb R \times \mathbb R^n, \mathbb R)$ and relations \cref{lem:eta:statement_1,lem:eta:statement_2}. Taking $\lambda_\eta = \max\limits_{(t,x) \in \Omega^\beta} \|\nabla_x \eta(t,x)\|$, we obtain \cref{lem:eta:statement_3}.
\end{proof}

\begin{lemma}\label{uniqueness_of_viscosity solution}
The viscosity solution of problem \cref{Hamilton-Jacobi_equation,terminal_condition} is unique.
\end{lemma}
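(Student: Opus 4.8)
The plan is to argue by contradiction following the comparison/doubling-of-variables technique from \cite[Chapter III, Theorem 3.15]{Bardi_Capuzzo-Dolcetta_1997}, but carried out \emph{locally} on the finite-dimensional slice $\Omega^\beta$ produced by \cref{lem:Omega}. Suppose there are two distinct viscosity solutions $\varphi_1,\varphi_2 \in \Phi$ of problem \cref{Hamilton-Jacobi_equation,terminal_condition}. Applying \cref{lem:Omega} to $\varphi_1,\varphi_2$, we obtain a point $(\tau,z,w(\cdot))$, a time $\nu$, constants $c_*,\beta,\theta,\alpha_\varphi,\lambda_\varphi>0$, a right extension $\kappa(\cdot)\in\Lambda_0(\tau,z,w(\cdot))$, and the associated functions $\tilde H$, $\tilde\varphi_i$ on $\Omega^\beta$ which are continuous, satisfy the local Lipschitz bound \cref{lem:Omega:statement_1} in $s$, the bounds \cref{lem:Omega:statement_2,lem:Omega:statement_3} on the $\varphi_i$, and crucially the strict initial gap \cref{lem:Omega:statement_4}: $\tilde\varphi_1(\tau,z)-\tilde\varphi_2(\tau,z) > 4(\nu-\tau)\theta + \tilde\varphi_1(\nu,x)-\tilde\varphi_2(\nu,x)$ for all $(\nu,x)\in\Omega^\beta$. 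Then I invoke \cref{lem:eta} to obtain a $\mathrm C^1$ barrier $\eta$ on $\Omega^\beta$ with $\eta(\tau,z)=0$, $\eta\ge 0$ on $\Omega^\beta$, $\eta\ge 2\alpha_\varphi+4(\nu-\tau)\theta$ on the lateral boundary $\partial\Omega^\beta$, and the key slope inequality $\|\nabla_x\eta\|\le -(1/c_*)\,\partial\eta/\partial t$.

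The heart of the argument is a doubling-of-variables penalization. For $\varepsilon>0$ small consider
\begin{displaymath}
\Xi_\varepsilon(t,x,x') = \tilde\varphi_1(t,x) - \tilde\varphi_2(t,x') - \frac{\|x-x'\|^2}{2\varepsilon} - \eta(t,x) - (t-\tau)\theta
\end{displaymath}
on the compact set $\{(t,x,x') : (t,x),(t,x')\in\Omega^\beta\}$, and let $(t_\varepsilon,x_\varepsilon,x_\varepsilon')$ be a maximizer. Standard penalization estimates (using \cref{lem:Omega:statement_2}) give $\|x_\varepsilon-x_\varepsilon'\|^2/\varepsilon \to 0$ and, along a subsequence, $t_\varepsilon\to\bar t$, $x_\varepsilon,x_\varepsilon'\to\bar x$ with $(\bar t,\bar x)$ a maximizer of $\tilde\varphi_1(t,x)-\tilde\varphi_2(t,x)-\eta(t,x)-(t-\tau)\theta$ on $\Omega^\beta$. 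Because of the strict gap \cref{lem:Omega:statement_4} together with the boundary size of $\eta$, the maximizer $(\bar t,\bar x)$ — and hence $(t_\varepsilon,x_\varepsilon,x_\varepsilon')$ for $\varepsilon$ small — stays in the \emph{interior} of $\Omega^\beta$ in the $x$-variable and does not sit on the terminal face $t=\nu$ (here one compares the value $\ge \Xi_\varepsilon(\tau,z,z)$, which is bounded below using $\eta(\tau,z)=0$, against the values on $\partial\Omega^\beta$ and on $\{t=\nu\}$). Thus $(t_\varepsilon,x_\varepsilon)$ and $(t_\varepsilon,x_\varepsilon')$ are admissible interior points with $t_\varepsilon<\nu$.

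Now I convert the infinitesimal maximality into the viscosity inequalities. The map $(t,x)\mapsto \tilde\varphi_1(t,x) - \big[\tilde\varphi_2(t_\varepsilon,x_\varepsilon')+\|x-x_\varepsilon'\|^2/(2\varepsilon)+\eta(t,x)+(t-\tau)\theta\big]$ has a local max at $(t_\varepsilon,x_\varepsilon)$; the subtracted function is $\mathrm C^1$ in a neighborhood in $O^+_\delta(t_\varepsilon,x_\varepsilon)$, so the subsolution property \cref{sub_viscosity_solution} of $\varphi_1$ (with test function $\psi_1$ equal to that $\mathrm C^1$ function, noting $\kappa_{t_\varepsilon}(\cdot)$ is exactly the history slice used in the $\tilde\varphi_i$) yields
\begin{displaymath}
\frac{\partial\eta}{\partial t}(t_\varepsilon,x_\varepsilon) + \theta + \tilde H\!\left(t_\varepsilon,x_\varepsilon,\frac{x_\varepsilon-x_\varepsilon'}{\varepsilon}+\nabla_x\eta(t_\varepsilon,x_\varepsilon)\right)\le 0.
\end{displaymath}
Symmetrically, $(t,x')\mapsto \tilde\varphi_2(t,x') - \big[\tilde\varphi_1(t_\varepsilon,x_\varepsilon)-\|x_\varepsilon-x'\|^2/(2\varepsilon)-\eta(t,x')-(t-\tau)\theta\big]$ has a local min at $(t_\varepsilon,x_\varepsilon')$ — wait, here the $t$-dependence is only through $\tilde\varphi_1(t_\varepsilon,\cdot)$, so one uses the supersolution property \cref{super_viscosity_solution} of $\varphi_2$ with a test function depending on $(t,x')$ through $-\eta(t,x')-(t-\tau)\theta$ and the quadratic, giving
\begin{displaymath}
-\frac{\partial\eta}{\partial t}(t_\varepsilon,x_\varepsilon') - \theta + \tilde H\!\left(t_\varepsilon,x_\varepsilon',\frac{x_\varepsilon-x_\varepsilon'}{\varepsilon}-\nabla_x\eta(t_\varepsilon,x_\varepsilon')\right)\ge 0.
\end{displaymath}
Subtracting, using the Lipschitz bound \cref{lem:Omega:statement_1} of $H$ in $s$ (with constant $c_*$) to absorb the difference of the two Hamiltonian arguments — whose $x$-slots agree in the limit and whose $s$-slots differ by $\nabla_x\eta(t_\varepsilon,x_\varepsilon)+\nabla_x\eta(t_\varepsilon,x_\varepsilon')$ plus a continuity error — and then invoking the slope inequality \cref{lem:eta:statement_2}, which exactly says $c_*\|\nabla_x\eta\|\le -\partial\eta/\partial t$, I get $2\theta \le c_*(\|\nabla_x\eta(t_\varepsilon,x_\varepsilon)\|+\|\nabla_x\eta(t_\varepsilon,x_\varepsilon')\|) + \big(\partial\eta/\partial t(t_\varepsilon,x_\varepsilon)-\partial\eta/\partial t(t_\varepsilon,x_\varepsilon')\big) + o(1) \le o(1)$ as $\varepsilon\to 0$ (the last step using continuity of $\nabla\eta$ and $x_\varepsilon-x_\varepsilon'\to 0$), a contradiction with $\theta>0$. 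Hence $\varphi_1\equiv\varphi_2$.

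The main obstacle — and the reason the preliminary lemmas \cref{lem:zeta_star,lem:taus_zs_ws_nus,lem:tau_z_nu,lem:Omega} are needed — is that the state space is the infinite-dimensional $\mathbb G$, so one cannot penalize histories directly; the trick is to freeze the history to a single admissible right extension $\kappa(\cdot)$ on a short interval $[\tau,\nu]$ and do the comparison purely in $(t,x)\in\Omega^\beta\subset\mathbb R\times\mathbb R^n$. Making this rigorous requires (i) that the frozen Hamiltonian $\tilde H$ be uniformly Lipschitz in $s$ on $\Omega^\beta$, which forces the short horizon $\zeta_*$ and the cone constant $c_*$ of \cref{lem:zeta_star}; (ii) that the perturbation introduced by replacing the true history by $\kappa(\cdot)$ be controlled — this is \cref{lem:Omega:statement_3}, used to replace $w(\cdot)$-dependence by the $(t-t')\theta$ term absorbed in the barrier; and (iii) that the initial gap survive localization, which is the content of the finite-backward-induction argument in \cref{lem:taus_zs_ws_nus,lem:tau_z_nu}. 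Once these are in place, the remaining work is the classical doubling-of-variables estimate, and the delicate bookkeeping is just making sure the maximizer stays interior and off the terminal face, handled by the barrier $\eta$ of \cref{lem:eta}.
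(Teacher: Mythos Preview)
Your overall strategy---localize to a finite-dimensional cone $\Omega^\beta$ via \cref{lem:Omega}, build the barrier $\eta$ from \cref{lem:eta}, then run a doubling-of-variables argument---matches the paper. But the doubling step as you wrote it has a real gap: you double only the \emph{space} variable, setting
\[
\Xi_\varepsilon(t,x,x')=\tilde\varphi_1(t,x)-\tilde\varphi_2(t,x')-\tfrac{1}{2\varepsilon}\|x-x'\|^2-\eta(t,x)-(t-\tau)\theta,
\]
with a single time $t$. When you then freeze $x'=x_\varepsilon'$ and claim that $(t,x)\mapsto \tilde\varphi_1(t,x)-\big[\tilde\varphi_2(t_\varepsilon,x_\varepsilon')+\tfrac{1}{2\varepsilon}\|x-x_\varepsilon'\|^2+\eta(t,x)+(t-\tau)\theta\big]$ has a local max at $(t_\varepsilon,x_\varepsilon)$, this is false: what $\Xi_\varepsilon$ maximality gives you is that $(t,x)\mapsto \tilde\varphi_1(t,x)-\tilde\varphi_2(t,x_\varepsilon')-\ldots$ has a max, and $\tilde\varphi_2(t,x_\varepsilon')$ still depends (non-smoothly) on $t$. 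You cannot absorb it into a $\mathrm C^1$ test function, so the viscosity inequality for $\varphi_1$ is not available. Your own ``wait'' on the $\varphi_2$ side is the same problem in mirror image. The paper avoids this by doubling \emph{both} time and space,
\[
\chi_\varepsilon(t,x,\xi,y)=\varphi_1(t,x,\kappa_t(\cdot))-\varphi_2(\xi,y,\kappa_\xi(\cdot))-\tfrac{1}{\varepsilon}\big(|t-\xi|^2+\|x-y\|^2\big)-2(2\nu-t-\xi)\theta-\eta(t,x)-\eta(\xi,y),
\]
so that freezing $(\xi,y)=(\xi_\varepsilon,y_\varepsilon)$ leaves only a genuine $\mathrm C^1$ test function in $(t,x)$, and symmetrically for $\varphi_2$.

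Two further points you will need when you repair this. First, the viscosity inequalities in \cref{def:viscosity_solution} are stated at $(t_\varepsilon,x_\varepsilon,\kappa_{t_\varepsilon}(\cdot))$ with the \emph{local} constant extension $\kappa'\in\Lambda_0(t_\varepsilon,x_\varepsilon,\kappa_{t_\varepsilon}(\cdot))$, not with the global $\kappa\in\Lambda_0(\tau,z,w(\cdot))$ that defines $\tilde\varphi_i$. The discrepancy $|\varphi_i(t,x,\kappa_t(\cdot))-\varphi_i(t,x,\kappa'_t(\cdot))|\le (t-t_\varepsilon)\theta$ from \cref{lem:Omega:statement_3} must be absorbed by an extra $(t-t_\varepsilon)\theta$ in the test function; you allude to this at the end but never actually insert it. Second, your $\Xi_\varepsilon$ carries only one barrier term $\eta(t,x)$, yet your displayed supersolution inequality for $\varphi_2$ invokes $\partial\eta/\partial t(t_\varepsilon,x_\varepsilon')$ and $\nabla_x\eta(t_\varepsilon,x_\varepsilon')$: these simply do not appear in the test function arising from your $\Xi_\varepsilon$. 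The paper uses two symmetric barriers $\eta(t,x)+\eta(\xi,y)$, which is what makes the final cancellation via \cref{lem:eta:statement_2} go through cleanly. (You also have the sub/super labels swapped relative to the paper's convention in \cref{def:viscosity_solution}: a local \emph{max} of $\varphi-\psi$ triggers \cref{super_viscosity_solution} and yields $\ge 0$.)
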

\begin{proof}
Aiming for a contradiction, suppose that $\varphi_1$ and $\varphi_2$ are different viscosity solutions of problem \cref{Hamilton-Jacobi_equation,terminal_condition}. According to \cref{lem:Omega}, define $(\tau,z,w(\cdot)) \in \mathbb G$, $\tau < \vartheta$, $\nu \in (\tau,\vartheta]$, and $c_*, \beta, \theta, \alpha_\varphi, \lambda_\varphi > 0$ such that the functions $\tilde{H}$ and $\tilde{\varphi}_i$, $i = 1,2$, defined in \cref{lem:Omega:definitions}, are continuous and \cref{lem:Omega:statement_1,lem:Omega:statement_2,lem:Omega:statement_3,lem:Omega:statement_4} hold. In accordance with \cref{lem:Omega:definitions}, denote $\Omega^\beta$ and $\kappa(\cdot)$. According to \cref{lem:eta}, define $\eta \in \mathrm{C}^1(\mathbb R \times \mathbb R^n, \mathbb R)$ and $\lambda_\eta > 0$ such that \cref{lem:eta:statement_1,lem:eta:statement_2,lem:eta:statement_3} are valid.

For every $\varepsilon > 0$, define the function
\begin{displaymath}
\begin{array}{c}
\chi_\varepsilon(t,x,\xi,y) = \varphi_1(t,x,\kappa_t(\cdot)) - \varphi_2(\xi,y,\kappa_\xi(\cdot)) - \big(|t - \xi|^2 + \|x - y\|^2\big) / \varepsilon\\[0.2cm]
- 2 (2 \nu - t - \xi) \theta - \eta(t,x) - \eta(\xi,y),\quad (t,x),(\xi,y) \in \Omega^\beta.
\end{array}
\end{displaymath}
Since $\tilde{\varphi}_i$, $i = 1,2$, and $\eta$ are continuous, the function $\chi_\varepsilon$ is continuous. Denote
\begin{equation}\label{lem:uniqueness:t_eps_x_eps_xi_eps_y_eps}
(t_\varepsilon, x_\varepsilon, \xi_\varepsilon, y_\varepsilon) \in \mathrm{argmax}\big\{\chi_\varepsilon(t,x,\xi,y) \,|\, (t,x),(\xi,y) \in \Omega^\beta\big\}.
\end{equation}
Then, from \cref{lem:Omega:statement_2,lem:eta:statement_1}, we derive
\begin{displaymath}
\begin{array}{c}
\big(|t_\varepsilon - \xi_\varepsilon|^2 + \|x_\varepsilon - y_\varepsilon\|^2\big) / \varepsilon
\leq \varphi_1(t_\varepsilon,x_\varepsilon,\kappa_{t_\varepsilon}(\cdot)) - \varphi_2(\xi_\varepsilon,y_\varepsilon,\kappa_{\xi_\varepsilon}(\cdot)) - \chi_\varepsilon(t_\varepsilon, x_\varepsilon, \xi_\varepsilon, y_\varepsilon) \\[0.2cm]
\leq 2 \alpha_\varphi - \chi_\varepsilon(\tau,z,\tau,z) \leq 4 \alpha_\varphi + 4 (\nu - \tau) \theta.
\end{array}
\end{displaymath}
Since the right-hand side of the inequality does not depend on $\varepsilon$, we obtain
\begin{equation}\label{lem:uniqueness:t0_x0}
t_\varepsilon, \xi_\varepsilon \to t_\circ,\quad x_\varepsilon, y_\varepsilon \to x_\circ,\quad \text{ as } \quad \varepsilon \to 0.
\end{equation}

Let us prove that  $\|x_\circ - z\| < (t_\circ - \tau) c_* + \beta$. For the sake of a contradiction, suppose that $\|x_\circ - z\| = (t_\circ - \tau) c_* + \beta$. Then, using \cref{lem:Omega:statement_2}, \cref{lem:eta:statement_1}, \cref{lem:uniqueness:t_eps_x_eps_xi_eps_y_eps}, \cref{lem:uniqueness:t0_x0}, and continuity of $\eta$, we derive
\begin{displaymath}
\begin{array}{c}
\varphi_1(\tau,z,w(\cdot)) - \varphi_2(\tau,z,w(\cdot)) - 4 (\nu - \tau) \theta = \chi_\varepsilon(\tau,z,\tau,z) \leq \limsup\limits_{\varepsilon \to +0} \chi_\varepsilon(t_\varepsilon,x_\varepsilon,\xi_\varepsilon,y_\varepsilon) \\[0.2cm]
\leq 2 \alpha_\varphi\! - 2 \eta(t_\circ,x_\circ)\! \leq\! - 2 \alpha_\varphi\! - 8 (\nu - \tau) \theta \leq \varphi_1(\tau,z,w(\cdot)) - \varphi_2(\tau,z,w(\cdot)) - 8 (\nu - \tau) \theta,
\end{array}
\end{displaymath}
that contradicts the inequality $\theta > 0$. Let us show that $t_\circ < \nu$. For the sake of a contradiction, suppose that $t_\circ = \nu$. Then, due to \cref{lem:Omega:statement_2}, \cref{lem:eta:statement_1}, \cref{lem:uniqueness:t_eps_x_eps_xi_eps_y_eps}, \cref{lem:uniqueness:t0_x0}, and a continuity of $\tilde{\varphi}_i$, $i = 1,2$ (see \cref{lem:Omega:definitions}), we obtain
\begin{displaymath}
\begin{array}{c}
\varphi_1(\tau,z,w(\cdot)) - \varphi_2(\tau,z,w(\cdot)) - 4 (\nu - \tau) \theta
= \chi_\varepsilon (\tau,z,\tau,z) \leq \limsup\limits_{\varepsilon \to +0} \chi_\varepsilon (t_\varepsilon, x_\varepsilon, \xi_\varepsilon, y_\varepsilon)\\[0.2cm]
\leq \varphi_1(\nu,x_\circ,\kappa_\nu(\cdot)) - \varphi_2(\nu,x_\circ,\kappa_\nu(\cdot)),
\end{array}
\end{displaymath}
that contradicts \cref{lem:Omega:statement_4}. Thus, there exist $\varepsilon_* > 0$ and $\delta > 0$ such that
\begin{equation}\label{lem:uniqueness:epsilon_s_delta}
O^+_\delta(t_\varepsilon,x_\varepsilon), O^+_\delta(\xi_\varepsilon,y_\varepsilon) \subset \Omega^\beta,\quad \varepsilon \in (0,\varepsilon_*),
\end{equation}
where $O^+_\delta(t_\varepsilon,x_\varepsilon)$ and $O^+_\delta(\xi_\varepsilon,y_\varepsilon)$ are defined according to \cref{def:viscosity_solution}.

Denote
\begin{equation}\label{lem:uniqueness:alpha_s}
B_s = \{s \in \mathbb R^n: \|s\| \leq \alpha_s\},\quad \alpha_s = 2 \lambda_\varphi + 3 \lambda_\eta.
\end{equation}
Due to a continuity of $\tilde{H}$ (see \cref{lem:Omega:definitions}) and \cref{lem:Omega:statement_1}, \cref{lem:uniqueness:t0_x0}, we can fix $\varepsilon \in (0, \varepsilon_*)$ such that
\begin{equation}\label{lem:uniqueness:H}
|H(t_\varepsilon,x_\varepsilon,\kappa_{t_\varepsilon}(\cdot),s) - H(\xi_\varepsilon,y_\varepsilon,\kappa_{\xi_\varepsilon}(\cdot),s')| \leq \theta + c_*\|s - s'\|,\quad s,s' \in B_s.
\end{equation}

Define the function
\begin{displaymath}
\psi_1(t,x) = - \chi_\varepsilon(t,x,\xi_\varepsilon,y_\varepsilon) + \varphi_1(t,x,\kappa_t(\cdot)) + (t - t_\varepsilon) \theta,\quad (t,x) \in \mathbb R \times \mathbb R^n.
\end{displaymath}
Then, according to the inclusion $\eta \in \mathrm{C}^1(\mathbb R \times \mathbb R^n, \mathbb R)$, we have $\psi_1 \in \mathrm{C}^1(\mathbb R \times \mathbb R^n, \mathbb R)$ and
\begin{displaymath}
\partial \psi_1(t,x) / \partial t = 2(t - \xi_\varepsilon) / \varepsilon + \partial \eta(t,x) / \partial t - \theta,\quad
\nabla_x \psi_1(t,x) = 2 (x - y_\varepsilon) / \varepsilon + \nabla_x \eta(t,x).
\end{displaymath}
Let $\kappa'(\cdot) \in \Lambda_0(t_\varepsilon,x_\varepsilon,\kappa_{t_\varepsilon}(\cdot))$. Then, taking into account \cref{lem:Omega:statement_3,lem:uniqueness:t_eps_x_eps_xi_eps_y_eps}, for every $(t,x) \in O^+_\delta(t_\varepsilon,x_\varepsilon)$, we derive
\begin{displaymath}
\begin{array}{c}
\varphi_1(t,x,\kappa'_t(\cdot)) - \psi_1(t,x) \leq
\varphi_1(t,x,\kappa_t(\cdot)) - \psi_1(t,x) + (t - t_\varepsilon) \theta = \\[0.2cm]
\chi_\varepsilon(t,x,\xi_\varepsilon,y_\varepsilon) \leq \chi_\varepsilon(t_\varepsilon,x_\varepsilon,\xi_\varepsilon,y_\varepsilon) = \varphi_1(t_\varepsilon,x_\varepsilon,\kappa_{t_\varepsilon}(\cdot)) - \psi_1(t_\varepsilon,x_\varepsilon).
\end{array}
\end{displaymath}
Thus, since $\varphi_1$ is a viscosity solution of problem \cref{Hamilton-Jacobi_equation,terminal_condition} (see \cref{def:viscosity_solution}), we obtain
\begin{equation}\label{lem:uniqueness:sub_solution}
2(t_\varepsilon - \xi_\varepsilon) / \varepsilon + \partial \eta(t_\varepsilon,x_\varepsilon) / \partial t - \theta
+ H\big(t_\varepsilon,x_\varepsilon,\kappa_{t_\varepsilon}(\cdot), 2 (x_\varepsilon - y_\varepsilon) / \varepsilon + \nabla_x \eta(t_\varepsilon,x_\varepsilon)\big) \geq 0.
\end{equation}
In the similar way, using the function
\begin{displaymath}
\psi_2(\xi,y) = \chi_\varepsilon(t_\varepsilon, x_\varepsilon, \xi, y) + \varphi_2(\xi, y, \kappa_\xi(\cdot)) - (\xi - \xi_\varepsilon) \theta,\quad (\xi,y) \in \mathbb R \times \mathbb R^n,
\end{displaymath}
we obtain
\begin{equation}\label{lem:uniqueness:super_solution}
2(t_\varepsilon - \xi_\varepsilon) / \varepsilon - \partial \eta(\xi_\varepsilon,y_\varepsilon) / \partial \xi + \theta
+ H\big(\xi_\varepsilon,y_\varepsilon,\kappa_{\xi_\varepsilon}(\cdot), 2 (x_\varepsilon - y_\varepsilon) / \varepsilon - \nabla_y \eta(\xi_\varepsilon,y_\varepsilon)\big) \leq 0.
\end{equation}

Let us consider the case when $t_\varepsilon \geq \xi_\varepsilon$. Then, according to \cref{lem:uniqueness:t_eps_x_eps_xi_eps_y_eps}, we have $\chi_\varepsilon(t_\varepsilon,x_\varepsilon,\xi_\varepsilon,y_\varepsilon) \geq \chi_\varepsilon(t_\varepsilon,y_\varepsilon,\xi_\varepsilon,y_\varepsilon)$. Hence, using \cref{lem:Omega:statement_2,lem:eta:statement_3}, we derive
\begin{displaymath}
\begin{array}{c}
0 \leq \varphi_1(t_\varepsilon,x_\varepsilon,\kappa_{t_\varepsilon}(\cdot)) - \varphi_1(t_\varepsilon,y_\varepsilon,\kappa_{t_\varepsilon}(\cdot))
- \|x_\varepsilon - y_\varepsilon\|^2 / \varepsilon - \eta(t_\varepsilon,x_\varepsilon) + \eta(t_\varepsilon,y_\varepsilon) \\[0.2cm]
 \leq (\lambda_\varphi + \lambda_\eta) \|x_\varepsilon - y_\varepsilon\| - \|x_\varepsilon - y_\varepsilon\|^2 / \varepsilon,
\end{array}
\end{displaymath}
wherefrom, we obtain the estimate $\|x_\varepsilon - y_\varepsilon\| \leq (\lambda_\varphi + \lambda_\eta) \varepsilon$. This estimate can be also obtained for the case when $t_\varepsilon < \xi_\varepsilon$ using  $\chi_\varepsilon(t_\varepsilon,x_\varepsilon,\xi_\varepsilon,y_\varepsilon) \geq \chi_\varepsilon(t_\varepsilon,x_\varepsilon,\xi_\varepsilon,x_\varepsilon)$. Based on this estimate together with \cref{lem:eta:statement_3}, we get
\begin{equation}\label{lem:uniqueness:c_s}
\max\big\{\big\|2 (x_\varepsilon - y_\varepsilon) / \varepsilon + \nabla_x \eta(t_\varepsilon,x_\varepsilon)\big\|,\ \big\|2 (x_\varepsilon - y_\varepsilon) / \varepsilon - \nabla_y \eta(\xi_\varepsilon,y_\varepsilon)\big\|\big\} \leq \alpha_s.
\end{equation}
where $\alpha_s$ is defined by \cref{lem:uniqueness:alpha_s}. Then, using \cref{lem:eta:statement_2,lem:uniqueness:H,lem:uniqueness:sub_solution,lem:uniqueness:super_solution}, we conclude
\begin{displaymath}
\begin{array}{rcl}
0 \!\!\!&\leq&\!\!\!\! \partial \eta(t_\varepsilon,x_\varepsilon) / \partial t - \theta
+ H\big(t_\varepsilon,x_\varepsilon,\kappa_{t_\varepsilon}(\cdot), 2 (x_\varepsilon - y_\varepsilon) / \varepsilon + \nabla_x \eta(t_\varepsilon,x_\varepsilon)\big)\\[0.2cm]
\!\!\!&+&\!\!\!\! \partial \eta(\xi_\varepsilon,y_\varepsilon) / \partial \xi - \theta
- H\big(\xi_\varepsilon,y_\varepsilon,\kappa_{\xi_\varepsilon}(\cdot), 2 (x_\varepsilon - y_\varepsilon) / \varepsilon - \nabla_y \eta(\xi_\varepsilon,y_\varepsilon)\big)\\[0.2cm]
\!\!\!&\leq&\!\!\!\! \partial \eta(t_\varepsilon,x_\varepsilon) / \partial t + \partial \eta(\xi_\varepsilon,y_\varepsilon) / \partial \xi + c_*\big(\|\nabla_x \eta(t_\varepsilon,x_\varepsilon)\| + \|\nabla_y \eta(\xi_\varepsilon,y_\varepsilon)\|\big) - \theta \leq - \theta,
\end{array}
\end{displaymath}
that contradicts the inequality $\theta > 0$.
\end{proof}

\begin{remark}
From the prove of the lemma above, it follows that a viscosity solution of problem \cref{Hamilton-Jacobi_equation,terminal_condition} is unique for any Hamiltonian $H$ satisfying conditions of \cref{lem:Omega}.
\end{remark}


\section{Infinitesimal criterion of viscosity solutions} In the section, we give lemmas which imply \cref{prop:subdifferential_inequalities}. These lemmas will be proved similarly to \cite[Lemma~1.1]{Crandall_Evans_Lions_1984}.

\begin{lemma}\label{lem:sub_viscosity_solution_and_subdifferential_viscosity_solution}
Let $\varphi \in \Phi$. Then property \cref{sub_viscosity_solution} is equivalent to inequality \cref{subdifferential_viscosity_solution}.
\end{lemma}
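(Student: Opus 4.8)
The plan is to establish the two implications separately, in the standard way for characterising viscosity subsolutions via the subdifferential.

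First I would prove that property \cref{sub_viscosity_solution} implies inequality \cref{subdifferential_viscosity_solution}. Let $(\tau,z,w(\cdot)) \in \mathbb G$ with $\tau < \vartheta$ and let $(p_0,p) \in D^-\varphi(\tau,z,w(\cdot))$. The idea is to produce, from the pair $(p_0,p)$, a test function $\psi \in \mathrm{C}^1(\mathbb R \times \mathbb R^n,\mathbb R)$ against which $\varphi(\cdot,\cdot,\kappa_\cdot(\cdot))$ attains a local right-minimum at $(\tau,z)$, so that \cref{sub_viscosity_solution} applies. By the definition \cref{subdifferential} of the subdifferential, writing $\tilde{\varphi}(t,x) = \varphi(t,x,\kappa_t(\cdot))$ for a fixed $\kappa(\cdot) \in \Lambda_0(\tau,z,w(\cdot))$, we have $\tilde\varphi(t,x) - \tilde\varphi(\tau,z) - (t-\tau)p_0 - \langle x-z,p\rangle \geq -\omega(|t-\tau|+\|x-z\|)(|t-\tau|+\|x-z\|)$ on $O^+_\delta(\tau,z)$ for a modulus $\omega$ with $\omega(0{+})=0$. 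A routine construction (following \cite[Lemma 1.1]{Crandall_Evans_Lions_1984}) yields a $\mathrm{C}^1$ function $\zeta$ of one nonnegative variable with $\zeta(0)=0$, $\zeta'(0)=0$, $\zeta(r) \geq 2\omega(r)r$ for small $r\geq 0$; then set $\psi(t,x) = \tilde\varphi(\tau,z) + (t-\tau)p_0 + \langle x-z,p\rangle - \zeta(|t-\tau|+\|x-z\|)$ — or, to keep $\psi$ smooth at $(\tau,z)$, use $\zeta(\sqrt{(t-\tau)^2+\|x-z\|^2})$ composed suitably. With this $\psi$ one checks $\partial\psi(\tau,z)/\partial\tau = p_0$, $\nabla_z\psi(\tau,z) = p$, and $\tilde\varphi(t,x) - \psi(t,x) \geq \tilde\varphi(\tau,z) - \psi(\tau,z)$ on a small $O^+_\delta(\tau,z)$; then \cref{sub_viscosity_solution} gives $p_0 + H(\tau,z,w(\cdot),p) \leq 0$, which is \cref{subdifferential_viscosity_solution}.

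Next I would prove the converse: inequality \cref{subdifferential_viscosity_solution} implies property \cref{sub_viscosity_solution}. Suppose $\psi \in \mathrm{C}^1(\mathbb R\times\mathbb R^n,\mathbb R)$ and $\delta>0$ are such that $\varphi(\tau,z,w(\cdot)) - \psi(\tau,z) \leq \varphi(t,x,\kappa_t(\cdot)) - \psi(t,x)$ on $O^+_\delta(\tau,z)$. Then with $\tilde\varphi$ as above, $\tilde\varphi(t,x) - \tilde\varphi(\tau,z) \geq \psi(t,x) - \psi(\tau,z)$ there, and by the first-order Taylor expansion of $\psi$, the right side equals $(t-\tau)\,\partial\psi(\tau,z)/\partial\tau + \langle x-z, \nabla_z\psi(\tau,z)\rangle + o(|t-\tau|+\|x-z\|)$. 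This shows that $(p_0,p) := (\partial\psi(\tau,z)/\partial\tau,\, \nabla_z\psi(\tau,z))$ satisfies the defining inequality \cref{subdifferential} for $D^-\varphi(\tau,z,w(\cdot))$, i.e. $(p_0,p) \in D^-\varphi(\tau,z,w(\cdot))$. Applying \cref{subdifferential_viscosity_solution} yields $\partial\psi(\tau,z)/\partial\tau + H(\tau,z,w(\cdot),\nabla_z\psi(\tau,z)) \leq 0$, which is the conclusion in \cref{sub_viscosity_solution}.

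The main obstacle is the first implication: one must manufacture a genuinely $\mathrm{C}^1$ test function from the quantitative subdifferential inequality, which involves only a one-sided estimate over the forward set $O^+_\delta(\tau,z)$ and a modulus $\omega$ that need not be smooth. Care is needed so that the correction term $\zeta(\cdot)$ is differentiable at the base point $(\tau,z)$ with vanishing derivative there (so that $\psi$ has the prescribed gradient and time-derivative), and so that the inequality $\varphi - \psi \geq \varphi(\tau,z,w(\cdot)) - \psi(\tau,z)$ genuinely holds on a possibly smaller neighbourhood; using the Euclidean distance $\sqrt{(t-\tau)^2 + \|x-z\|^2}$ rather than the $\ell^1$-type quantity $|t-\tau|+\|x-z\|$ inside $\zeta$ resolves the smoothness issue, at the cost of an easy comparison between the two. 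Everything else — the Taylor expansion, the identification of $(p_0,p)$, and the invocation of \cref{subdifferential_viscosity_solution} — is routine, exactly as in \cite[Lemma 1.1]{Crandall_Evans_Lions_1984}.
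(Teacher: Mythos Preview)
Your proposal is correct and follows essentially the same approach as the paper: the converse implication is identical, and for the forward implication both you and the paper construct a $\mathrm{C}^1$ test function touching $\tilde\varphi$ from below at $(\tau,z)$ with derivatives $(p_0,p)$, invoking the standard Crandall--Evans--Lions machinery. The only cosmetic difference is that the paper first takes $\overline{\varphi}=\min\{0,\tilde\varphi-\tilde\varphi(\tau,z)-(t-\tau)p_0-\langle x-z,p\rangle\}$, extends it by reflection across $t=\tau$ to a full neighbourhood, and then cites \cite[Lemma~I.4]{Crandall_Lions_1983} to produce the $\mathrm{C}^1$ minorant, whereas you build the correction $\zeta$ by hand; both devices are standard and interchangeable.
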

\begin{proof}
Let us show that \cref{sub_viscosity_solution} implies  \cref{subdifferential_viscosity_solution}. Let $(\tau,z,w(\cdot)) \in \mathbb G$, $\tau < \vartheta$ and $(p_0,p) \in D^-\varphi(\tau,z,w(\cdot))$. Define the function $\tilde{\varphi}$ by \cref{tilde_phi} and the function $\overline{\varphi}$ as follows
\begin{displaymath}
\begin{array}{lll}
\overline{\varphi}(t,x) \!\!\!\!\!& = \min\{0,\tilde{\varphi}(t,x) - \tilde{\varphi}(\tau,z) - (t - \tau) p_0 - \langle x - z, p \rangle\}, &(t, x) \in [\tau,\vartheta] \times \mathbb R^n,\\[0.2cm]
\overline{\varphi}(t,x) \!\!\!\!\!& = \overline{\varphi}(\vartheta,x), &(t, x) \in (\vartheta,+\infty) \times \mathbb R^n,\\[0.2cm]
\overline{\varphi}(t,x) \!\!\!\!\!& = \overline{\varphi}(\tau - (t - \tau),x), &(t, x) \in (-\infty,\tau) \times \mathbb R^n.
\end{array}
\end{displaymath}
Taking into account \cref{lem:tilde_phi}, we can show that $\overline{\varphi}$ is continuous on $\mathbb R \times \mathbb R^n$, differentiable at $(\tau,z)$, and $\overline{\varphi}(\tau,z) = 0$, $\partial\, \overline{\varphi}(\tau,z) / \partial \tau = 0$, $\nabla_z \overline{\varphi}(\tau,z) = 0$. By \cite[Lemma~I.4]{Crandall_Lions_1983}, there exists a function $\overline{\psi} \in \mathrm{C}^1(\mathbb R \times \mathbb R^n, \mathbb R)$ such that $\overline{\psi}(\tau,z) = 0$, $\partial\, \overline{\psi}(\tau,z) / \partial \tau = 0$, $\nabla_z \overline{\psi}(\tau,z) = 0$, and
\begin{displaymath}
\overline{\varphi}(t,x) - \overline{\psi}(t,x) \geq \overline{\varphi}(\tau,z) - \overline{\psi}(\tau,z),\quad (t,x) \in O^+_\delta(\tau,z),
\end{displaymath}
for some $\delta > 0$. Then, defining $\psi(t,x) = \overline{\psi}(t,x) + (t - \tau) p_0 + \langle x - z, p \rangle$, we have $\partial\psi(\tau,z) / \partial \tau = p_0$, $\nabla_z \psi(\tau,z) = p$, and hence, from \cref{sub_viscosity_solution} we obtain \cref{subdifferential_viscosity_solution}.

Let us show that \cref{subdifferential_viscosity_solution}  implies \cref{sub_viscosity_solution}. Let $(\tau,z,w(\cdot)) \in \mathbb G$, $\tau < \vartheta$, $\psi \in \mathrm{C}^1(\mathbb R \times \mathbb R^n, \mathbb R)$, and $\delta > 0$ be such that
\begin{displaymath}
\varphi(\tau,z,w(\cdot)) - \psi(\tau,z) \leq \varphi(t,x,\kappa_t(\cdot)) - \psi(t,x),\quad (t,x) \in O^+_\delta(\tau,z).
\end{displaymath}
Hence, according to definition of $D^-\varphi(\tau,z,w(\cdot))$, we derive $(\partial \psi(\tau,z) / \partial \tau, \nabla_z \psi(\tau,z)) \in D^-\varphi(\tau,z,w(\cdot))$. Applying this inclusion to \cref{subdifferential_viscosity_solution}, we obtain \cref{sub_viscosity_solution}.
\end{proof}

\begin{lemma}\label{lem:super_viscosity_solution_and_superdifferential_viscosity_solution}
Let $\varphi \in \Phi$. Then property \cref{super_viscosity_solution} is equivalent to inequality \cref{superdifferential_viscosity_solution}.
\end{lemma}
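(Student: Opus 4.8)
The plan is to repeat verbatim, with signs and inequalities reversed, the argument used for \cref{lem:sub_viscosity_solution_and_subdifferential_viscosity_solution}; I describe below only the points where the "super" case differs and which auxiliary facts are invoked.

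To show that \cref{super_viscosity_solution} implies \cref{superdifferential_viscosity_solution}, I would fix $(\tau,z,w(\cdot)) \in \mathbb G$ with $\tau < \vartheta$, a pair $(q_0,q) \in D^+\varphi(\tau,z,w(\cdot))$, and $\kappa(\cdot) \in \Lambda_0(\tau,z,w(\cdot))$. With $\tilde\varphi$ given by \cref{tilde_phi} (continuous by \cref{lem:tilde_phi}), set
\[
\overline{\varphi}(t,x) = \max\big\{0,\ \tilde{\varphi}(t,x) - \tilde{\varphi}(\tau,z) - (t - \tau) q_0 - \langle x - z, q \rangle\big\}, \quad (t,x) \in [\tau,\vartheta]\times\mathbb R^n,
\]
and extend $\overline\varphi$ to $\mathbb R\times\mathbb R^n$ by the same reflection-in-$t$-across-$\tau$ and freezing-at-$\vartheta$ used in the proof of \cref{lem:sub_viscosity_solution_and_subdifferential_viscosity_solution}. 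By definition \cref{superdifferential} of the superdifferential, the $\limsup$ at $(\tau,z)$ of the corresponding difference quotient is $\le 0$, so the positive part above is $o(|t-\tau|+\|x-z\|)$; hence $\overline\varphi$ is continuous, nonnegative, vanishes at $(\tau,z)$, and is differentiable there with $\partial\,\overline{\varphi}(\tau,z)/\partial\tau = 0$ and $\nabla_z\overline{\varphi}(\tau,z) = 0$. Applying the "touching from above" version of \cite[Lemma~I.4]{Crandall_Lions_1983} (equivalently, that lemma applied to $-\overline\varphi$), I obtain $\overline{\psi} \in \mathrm{C}^1(\mathbb R\times\mathbb R^n,\mathbb R)$ with $\overline{\psi}(\tau,z)=0$, vanishing first derivatives at $(\tau,z)$, and $\overline{\varphi}(t,x) - \overline{\psi}(t,x) \le \overline{\varphi}(\tau,z) - \overline{\psi}(\tau,z)$ on $O^+_\delta(\tau,z)$ for some $\delta>0$, i.e.\ $\overline\psi \ge \overline\varphi$ there. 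Then $\psi(t,x) = \overline{\psi}(t,x) + (t-\tau)q_0 + \langle x-z,q\rangle$ satisfies $\partial\psi(\tau,z)/\partial\tau = q_0$, $\nabla_z\psi(\tau,z)=q$, and $\varphi(t,x,\kappa_t(\cdot)) - \psi(t,x) \le \varphi(\tau,z,w(\cdot)) - \psi(\tau,z)$ on $O^+_\delta(\tau,z)$, which is exactly the premise of \cref{super_viscosity_solution}; its conclusion gives $q_0 + H(\tau,z,w(\cdot),q)\ge 0$, i.e.\ \cref{superdifferential_viscosity_solution}.

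For the converse, suppose \cref{superdifferential_viscosity_solution} holds and let $(\tau,z,w(\cdot))\in\mathbb G$, $\tau<\vartheta$, $\psi\in\mathrm{C}^1(\mathbb R\times\mathbb R^n,\mathbb R)$, $\delta>0$ be as in the premise of \cref{super_viscosity_solution}. Inserting the first-order expansion $\psi(t,x)-\psi(\tau,z) = (t-\tau)\partial\psi(\tau,z)/\partial\tau + \langle x-z,\nabla_z\psi(\tau,z)\rangle + o(|t-\tau|+\|x-z\|)$ into the inequality $\varphi(t,x,\kappa_t(\cdot)) - \varphi(\tau,z,w(\cdot)) \le \psi(t,x)-\psi(\tau,z)$ shows, via definition \cref{superdifferential}, that $(\partial\psi(\tau,z)/\partial\tau,\ \nabla_z\psi(\tau,z)) \in D^+\varphi(\tau,z,w(\cdot))$; applying \cref{superdifferential_viscosity_solution} then yields $\partial\psi(\tau,z)/\partial\tau + H(\tau,z,w(\cdot),\nabla_z\psi(\tau,z)) \ge 0$, which is the conclusion of \cref{super_viscosity_solution}.

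I do not expect a substantive obstacle: the proof is entirely parallel to that of \cref{lem:sub_viscosity_solution_and_subdifferential_viscosity_solution}. The only points requiring care are the bookkeeping of signs (the $\max$ rather than $\min$ in the definition of $\overline\varphi$, and invoking \cite[Lemma~I.4]{Crandall_Lions_1983} in the form producing a $\mathrm{C}^1$ majorant rather than a minorant), and the verification that the $\limsup$ condition in \cref{superdifferential} genuinely forces $\overline\varphi$ to be differentiable at $(\tau,z)$ with vanishing value and derivatives.
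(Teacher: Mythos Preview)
Your proposal is correct and follows exactly the approach the paper intends: the paper's own proof of \cref{lem:super_viscosity_solution_and_superdifferential_viscosity_solution} consists solely of the sentence ``The lemma can be proved similarly to \cref{lem:sub_viscosity_solution_and_subdifferential_viscosity_solution},'' and your write-up is precisely the sign-reversed version of that argument (with $\max$ in place of $\min$, the majorant form of \cite[Lemma~I.4]{Crandall_Lions_1983}, and the superdifferential in place of the subdifferential).
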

\begin{proof}
The lemma can be proved similarly to \cref{lem:sub_viscosity_solution_and_subdifferential_viscosity_solution}
\end{proof}


\bibliographystyle{siamplain}
\bibliography{references}

\end{document}